\DeclareMathAlphabet{\mathscrbf}{OMS}{mdugm}{b}{n}
\newtheorem{theorem}{Theorem}[section]
\newtheorem{lemma}[theorem]{Lemma}
\newtheorem{prop}[theorem]{Proposition}
\newtheorem{corol}[theorem]{Corollary}
\theoremstyle{definition}
\newtheorem{definition}[theorem]{Definition}
\newtheorem{example}[theorem]{Example}
\theoremstyle{remark}
\numberwithin{equation}{section}
\DeclareMathOperator{\Id}{Id}
\DeclareMathOperator{\tr}{tr}
\DeclareMathOperator{\Sym}{Sym}
\DeclareMathOperator{\Cl}{C\ell}
\DeclareMathOperator{\Ric}{Ric}
\DeclareMathOperator{\Sca}{Scal}
\DeclareMathOperator{\ke}{Ker}
\DeclareMathOperator{\Span}{span}
\DeclareMathOperator{\Scho}{Sch}
\DeclareMathOperator{\Spin}{Spin}
\DeclareMathOperator{\rnk}{rnk}
\newcommand{\fr}{\mathfrak}
\newcommand{\al}{\alpha}
\newcommand{\be}{\beta}
\newcommand{\bb}{\mathbb}
\newcommand{\cal}{\mathcal}
\DeclareMathOperator{\SO}{SO}
\DeclareMathOperator{\Sp}{Sp}
 \DeclareMathOperator{\SU}{SU}
  \DeclareMathOperator{\Spec}{Spec}
\DeclareMathOperator{\U}{U}
\DeclareMathOperator{\G}{G}
\DeclareMathOperator{\Ss}{S}
\DeclareMathOperator{\Ed}{End}
\DeclareMathOperator{\Gl}{GL}
\newcommand{\thickline}{\noalign{\hrule height 1pt}}
\begin{document}   

 \title
{Killing and twistor spinors   with torsion}
\author{Ioannis Chrysikos}
 \address{Department of Mathematics and Statistics, Masaryk University, Brno  611 37, Czech Republic}
 \email{chrysikosi@math.muni.cz}




 

 \begin{abstract}
We study twistor spinors (with torsion) on Riemannian spin manifolds $(M^{n}, g, T)$ carrying metric connections with totally skew-symmetric torsion.  We consider the characteristic connection $\nabla^{c}=\nabla^{g}+\frac{1}{2}T$ and under the condition  $\nabla^{c}T=0$, we show  that the twistor equation with torsion w.r.t. the family $\nabla^{s}=\nabla^{g}+2sT$ can be viewed as a parallelism condition under a suitable connection on the  bundle $\Sigma\oplus\Sigma$, where $\Sigma$ is the associated spinor bundle.  Consequently, we prove  that   a twistor spinor with torsion has isolated zero points.  Next we study a special class of twistor spinors with torsion, namely these which are $T$-eigenspinors  and parallel under the characteristic connection; we show  that  the existence of such a spinor   for some $s\neq  1/4$  implies that   $(M^{n}, g, T)$ is both Einstein and $\nabla^{c}$-Einstein, in particular  the equation $\Ric^{s}=\frac{\Sca^{s}}{n}g$   holds for any $s\in\bb{R}$.  In fact,  for  $\nabla^{c}$-parallel  spinors  we provide a correspondence between  the   Killing spinor  equation    with torsion and the Riemannian Killing spinor  equation.    This allows us to describe 1-parameter families of non-trivial Killing spinors with torsion   on  nearly K\"ahler manifolds  and   nearly parallel $\G_2$-manifolds, in dimensions 6 and 7, respectively, but also on  the 3-dimensional sphere $\Ss^{3}$. We finally present applications related to the universal and twistorial eigenvalue estimate  of the square of the cubic Dirac operator.
  
       \medskip
 \noindent 2000 {\it Mathematics Subject Classification.}    53C25-28, 53C30, 58J60.
 
 \noindent {\it Keywords}:  characteristic connection,  parallel spinor,   twistor spinor,  Killing spinor with torsion,  $\nabla$-Einstein structure, cubic  Dirac operator
   \end{abstract}
\maketitle  
 
\section*{Introduction}
Consider a  connected  Riemannian spin manifold $(M^{n}, g)$  carrying a family of  metric connections $\nabla^{s}$ with   (totally) skew-symmetric torsion, say $4sT$ for some non-trivial 3-form $T\in \bigwedge^{3}M$ and $s\in\bb{R}$.  We use the same symbol for the lift of $\nabla$ on the spinor bundle $\Sigma\to M$  and  denote by  $X\cdot \varphi:=\mu(X\otimes\varphi)$   the Clifford multiplication $\mu : TM\otimes \Sigma\to \Sigma$  at the bundle level. If there exist some $s\neq 0$ and $\zeta\neq 0$ such that   
\[
\nabla^{s}_{X}\varphi=\zeta X\cdot\varphi, \quad (\ast)
\]
for  any vector field $X$ on $M$, then  the spinor field $\varphi\in\Gamma(\Sigma)$ is called a {\it Killing spinor with  torsion}  (KsT in short). For $s=0$  (zero torsion), this equation reduces to the   Riemannian Killing spinor equation with its strong geometric  consequences, see for example \cite{Baum, Fr}.   Similarly, a {\it twistor spinor  with torsion} (TsT in short), is a  section $\varphi\in\Gamma(\Sigma)$ satisfying the twistor equation with respect to $\nabla^{s}$ for some $s\neq 0$, i.e.
  \[
\nabla^{s}_{X}\varphi+\frac{1}{n}X\cdot D^{s}\varphi=0, \quad (\ast\ast)
\]
  where $D^{s}=\mu\circ\nabla^{s}$ is the induced Dirac operator.     The interest in   TsT  and in some special cases  KsT   (e.g. in dimension 6, see \cite{ABK}, but also in dimension 7  as we will show in the present work),  is due to the fact that 
 they realize    the equality case for eigenvalue estimates of     the  cubic Dirac operator  $\slashed{D}=D^{g}+\frac{1}{4}T$, i.e. the Dirac operator   associated to the connection with torsion $T/3$.  These   estimates    occur both in the presence of parallel torsion.  The so-called {\it universal estimate} $\be_{\rm univ}$ is based on  a  generalized formula  of  Schr\"ondinger-Lichnerowicz type, see \cite{Bismut, Kos, Dalakov,  FrIv, Agr03, AF}  for a description of the long history related with the square of  $\slashed{D}$.  The {\it  twistorial estimate} $\be_{\rm tw}$ is  more recent and relies on a  method using   Penrose's twistor operator associated to $\nabla^{s}$, see   \cite{ABK, Julia}. The equality case for this estimate takes place when the Riemannian scalar curvature is constant and  the spinor is a twistor spinor  with torsion (TsT) for a specific parameter $s$ (depending on the dimension $n$ of $M$).  Hence, it gives rise to a technique available for the construction of  non-trivial TsT.        It is an interesting question to check if these twistor spinors with torsion  are also some kind of Killing spinors and what the geometric inclusions are when the two estimates coincide, if any.

 A customary trick to attack  these problems is the assumption of parallel torsion.  To be more precise, throughout this paper we shall be interested in connected Riemannian spin manifolds     $(M^{n}, g)$   endowed with a non-integrable $G$-structure  $(G\subsetneq \SO_{n})$ and its {\it characteristic connection} $\nabla^{c}$.  This, if it exists, is a  $G$-invariant metric connection with non-trivial skew-torsion $T$ (unique in general), preserving the tensor fields defining the $G$-structure (we refer to   \cite{Srni} for a detailed exposition). Since for $s=1/4$ the family $\nabla^{s}$ has torsion $T$, from now on we set  $\nabla^{1/4}=\nabla^{c}$.      Notice that our requirement    $\nabla^{c}T=0$  is not a very restrictive condition (see \cite{FriedG2, Srni} for several examples of  $G$-structures satisfying this setting). On the other hand, it implies the vanishing of the co-differential   $\delta T=0$ and ensures the compatibility of the actions of $\nabla^{c}$ and $T$ on $\Sigma$.   Hence, the Ricci tensors  $\Ric^{c}$ and more general $\Ric^{s}$, remain   symmetric and the  same time  the  spinor bundle $\Sigma$ decomposes into a direct sum of  eigenbundles $\Sigma_{\gamma}$ corresponding to  the $T$-action.
 
   In this note, we first compute the Ricci tensor and scalar curvature of a manifold $(M^{n}, g, T)$ carrying   twistor spinors  with respect to the family $\nabla^{s}$, under the assumption $\nabla^{c}T=0$.  Then, we  prove that the twistor equation $P^{s}(\varphi)=0$ can be viewed as  a  parallelism condition  with respect to a suitable connection  $\nabla^{s, E}$ on the vector bundle $\Sigma\oplus\Sigma$ (Theorem \ref{labro}). In particular, a section $\varphi\in\Gamma(\Sigma)$ is a  twistor spinor with torsion if and only if the section $\varphi\oplus D^{s}(\varphi)$ of $\Sigma\oplus\Sigma$ is $\nabla^{s, E}$-parallel. In this way we generalize the Riemannian case (compare with \cite[Thm.~4, p.~25]{Baum}, for instance) and we show that   the zeros of such spinors are isolated. 
 
   Next, we take advantage of the  splitting $\Sigma=\oplus_{\gamma}\Sigma_{\gamma}$ and   examine a special class of TsT, namely  these which are parallel under the characteristic connection $\nabla^{c}$.  Notice that the existence of a $\nabla^{c}$-parallel spinor is already  a   strong condition which imposes  restrictions on the  holomomy group of $\nabla^{c}$, see for example \cite{Dalakov, FrIv, FriedG2}.  
 Here, for a compact triple $(M^{n}, g, T)$ of positive scalar  curvature, we  present first a criterion which allows us to decide when a $\nabla^{c}$-parallel spinor   lying inside $\Sigma_{\gamma}$ for some $\gamma\neq 0$, is a real Killing spinor (Proposition \ref{klik}).  For $3< n\leq 8$ we see  that this is   the limiting case of the corresponding criterion related to its  existence \cite[Lem.~4.1]{ABK}.   Furthermore, under certain   assumptions  we  identify these  classes of spinors (Theorem   \ref{newFI}) and next extend this correspondence to  KsT and TsT as well.  In particular, for  a  $\nabla^{c}$-parallel spinor  $\varphi$  we show  that   the Riemannian Killing spinor equation $\nabla^{g}_{X}\varphi=\kappa X\cdot\varphi$ with $\kappa:=3\gamma/4n$ for some   $\gamma\neq 0$ is equivalent to the KsT equation $(\ast)$ for some (and thus any) $s\neq 0, \frac{1}{4}$ with  Killing number   $\zeta:=3(1-4s)\gamma/4n$ and moreover with the twistor equation $(\ast\ast)$ for some (and thus any) $s\neq 1/4$, under  the additional condition   $\varphi\in\Sigma_{\gamma}$ (Theorem \ref{general1}). 
 Using these results and combining with \cite[Thm.~3.4]{FrIv}  we  finally conclude  that on a compact Riemannian manifold of constant scalar curvature given by $\Sca^{g}=\frac{9(n-1)\gamma^{2}}{4n}$ for some non-zero $T$-eigenvalue  $0\neq \gamma\in\Spec(T)$, the following   classes of spinors, if existent, coincide  
   \[
 \ke(\nabla^{c})\cong \bigoplus_{\gamma}\Big[\Gamma(\Sigma_{\gamma})\cap\cal{K}(M, g)_{\frac{3\gamma}{4n}}\Big]\cong\bigoplus_{\gamma} \Big[\Gamma(\Sigma_{\gamma})\cap\cal{K}^{s}(M, g)_{\frac{3(1-4s)\gamma}{4n}}\Big]\cong \bigoplus_{\gamma} \Big[\ke(P^{s}\big|_{\Sigma_{\gamma}})\cap\ke\big(D^{c}\big)\Big],
  \]
under the assumption that  $\nabla^{c}T=0$ and that the symmetric endomorphism  $dT+\frac{1}{2}\Big[\frac{9(n-1)}{4n}\gamma^{2}-\frac{3}{2}\|T\|^{2}\Big]$ acts on $\Sigma$ with non-negative eigenvalues (see Theorem  \ref{newFI}, Corollary \ref{addd}).     
   
   In the following we examine the geometric constrains that imposes the existence of such spinors.   Due to the previous correspondence, it is obvious that   a triple $(M^{n}, g, T)$ $(n\geq 3)$ carrying a non-trivial $\nabla^{c}$-parallel spinor field  $\varphi\in\Gamma(\Sigma)$  which is a Killing spinor with torsion (KsT) with respect to the family $\nabla^{s}=\nabla^{g}+2sT$ with Killing number $\zeta=3\gamma(1-4s)/4n\neq 0$, must  be Einstein
  \[
    \Ric^{g}= \frac{9(n-1)\gamma^{2}}{4n^{2}}\Id_{TM}.  \quad \text{\sc (I)}
    \] 
  Moreover, the equation  $T\cdot\varphi=\gamma\varphi$ needs to hold.  However, one can say much more; we prove that $(M^{n}, g, T)$ is  also $\nabla^{c}$-Einstein    (in fact, for $n=3$ such an manifold must be $\Ric^{c}$-flat, although never $\Ric^{g}$-flat, see Proposition \ref{bravo})
   \[
 \Ric^{c} =\frac{3(n-3)\gamma^{2}}{n^{2}}\Id_{TM}. \quad \text{\sc (II)}
\]
  To do this, we  follow a spinorial approach to  $\nabla^{c}$-Einstein manifolds carrying a $\nabla^{c}$-parallel spinor $\varphi\in\Sigma_{\gamma}$, which is  available for any triple $(M^{n}, g, T)$ admitting parallel spinors  $\varphi\in\Sigma_{\gamma}$    with respect to a metric connection $\nabla$ with {\it parallel} skew-torsion $T$ (see for example  \cite{FriedG2}).  In  our case,  this technique    allows us  to deduce  that $(M^{n}, g, T)$ is $\nabla^{c}$-Einstein and   then we use this result to provide an alternative proof of the original Einstein condition, independent of the fact  that $\varphi$  must be   a real Killing spinor.     In a sense, this is   the opposite of the  way that  $\nabla^{c}$-Einstein structures have been    traditionally examined, especially   in dimensions 6 and 7 (see e.g.  \cite[Prop.~10.4]{FrIv}),  but also in  more general cases, e.g.  naturally reductive spaces (see \cite{Chrysk}). Taking advantage of our study on twistor spinors   we finally deduce that   a triple  $(M^{n}, g, T)$ endowed with  a $\nabla^{c}$-parallel KsT with $\zeta=3\gamma(1-4s)/4n\neq 0$,  must be $\nabla^{s}$-Einstein (with non-parallel torsion) for any $s\neq 0, 1/4$.  In particular,  the equation $\Ric^{s}=\frac{\Sca^{s}}{n}g$ is satisfied for any $s$, with the values $s=0, 1/4$ being the special values described above (Proposition \ref{bravo}).  
  Hence, {\it the existence of  $\nabla^{c}$-parallel real Killing spinor with Killing number $\kappa=3\gamma/4n$ for some $0\neq \gamma\in\Spec(T)$, or equivalent  the existence of a $\nabla^{c}$-parallel KsT for some $s\neq 0, 1/4$ with $\zeta=3\gamma(1-4s)/4n$,  implies that }  \[
  \Ric^{s}=\frac{\Sca^{s}}{n}g, \quad \forall \ s\in\bb{R}. \quad \text{\sc (III)}
  \]

In this point, one should  emphasise  that our   results  have been described with respect to the same Riemannian metric (without any deformation).  In particular, Theorem \ref{newFI},  Theorem \ref{general1} and Proposition \ref{bravo}  highlight  this case and allow us to    provide new examples (related with the existence of KsT).  There are two  representative classes of non-integrable $G$-structures  carrying this special kind of KsT;  nearly parallel $\G_2$-manifolds in dimension 7 and    nearly K\"ahler manifolds in dimension 6.   
Another remarkable example is the round 3-sphere $\Ss^{3}$, where $\nabla^{c}$  is not unique and coincides with    the flat $\pm 1$-connections of Cartan-Schouten.  
However, we point out that Einstein-Sasakian manifolds  in any odd dimension $\geq 5$  cannot be  candidates of   Theorem \ref{general1} for example,    since according to  \cite[Rem.~2.26]{AFer} such a manifold is never $\nabla^{c}$-Einstein.  Indeed, the integrability conditions ({\sc{I}}) and ({\sc{II}})  are already very strong and  provide a recipe to describe   several special structures endowed with their characteristic connection  that fail to carry $\nabla^{c}$-parallel Killing spinors with torsion with respect to  $\nabla^{s}=\nabla^{g}+2sT$.  To avoid confusions, we   mention that any Einstein-Sasaki manifold $M^{n}$ $(n\geq 5)$ carries KsT and these occur  of the real Killing spinors after applying the Tanno deformation on the Einstein-Sasaki metric, see   \cite[Ex.~3.16]{AHol} or \cite[Ex.~5.1, 5.2] {ABK} and for the  full picture   the Phd thesis  \cite{Julia}. However, for these spinors the parallelism equation and the KsT equation do not hold anymore with respect to the same metric. This is the  main difference with the KsT lying at the heart of the present work; they are always parallel under  the characteristic connection  and both our spinorial equations are verified with  respect to the same metric  for which the equation $\nabla^{c}\varphi=0$ holds. 

  For    nearly parallel $\G_2$-structures, Theorem \ref{general1}  gives rise to a   complete  description of all non-trivial $\nabla^{c}$-parallel KsT that such a manifold  admits (Theorem \ref{nG2}).   The same applies  in dimension 6 for nearly K\"ahler manifolds, with the difference that   the result was known for  $s=5/12$, see \cite[Thm.~6.1]{ABK} or \cite{Julia}.  Here, we    extend this correspondence to any parameter $s\in\bb{R}\backslash\{0, 1/4\}$ (Theorem \ref{nk}).  
    Notice    that the existence of such spinors   on  nearly parallel $\G_2$-manifolds  has been conjectured in \cite{ABK}, and this was part of our motivation.
  We  finally  remark that  these weak holonomy structures are both solutions of the equations for the common sector of type II superstring theory: $\nabla^{c}\varphi=0$, $T\cdot\varphi=\gamma\cdot \varphi$, $\delta(T)=0$ and $\nabla^{c}\Ric^{c}=0$  (see \cite{FrIv, Agr03, AFNP, Srni}).
 The supersymmetries  of the model are interpreted by the  $\nabla^{c}$-parallel spinors.  Our results show that there are models of this kind, which  give rise to  solutions of the Killing spinor equation with torsion and satisfy equation ({\sc{III}}), for any $s\in\bb{R}$.
       
  In the last part of the paper, we  examine   some further applications. 
  We  show for example that the integrability condition related to the existence of general KsT (for $s=(n-1)/4(n-3)$, see  \cite[Thm.~A.2]{ABK}), reduces in the special case of  a $\nabla^{c}$-parallel KsT with $\zeta=3\gamma(1-4s)/4n$  to  an  identity, namely the twistor equation with torsion (Corollary \ref{little}).
  After that, we focus on the inequality $\beta_{\rm tw}(\gamma)\leq \beta_{\rm univ}(\gamma)$.    For $3<n\leq 8$ and for  a spinor $\varphi\in\Sigma_{\gamma}$ satisfying the equation $\nabla^{c}\varphi=0$,    we prove that  the  equation $\beta_{\rm tw}(\gamma)=\beta_{\rm univ}(\gamma)$ is equivalent to say that      $\varphi$ is a real Killing spinor   with Killing number $\kappa=3\gamma/4n$ (Proposition \ref{afterklik}).  As a consequence, in dimension six this special case is exhausted by  nearly K\"ahler manifolds (see   \cite[Ex.~6.1]{ABK}) and  in dimension 7 by nearly parallel $\G_2$-manifolds.

  \smallskip
\noindent {\bf Acknowledgements.} The author  acknowledges   support  by  GA\v{C}R (Czech Science Foundation), post-doctoral grant  no.14-2464P.  
He  warmly  thanks  Ilka Agricola (Marburg) and Thomas Friedrich (Berlin) for   valuable  discussions that  improved this work, as well as,   Ivan Minchev (Brno) and Arman Taghavi-Chabert (Brno)  for  very useful   comments. He also  acknowledges FB12 at Philipps-Universit\"at Marburg for its hospitality during a research  stay in April 2015.

   	\section{Preliminaries}
	Let us consider a connected Riemannian spin manifold $(M^{n}, g, T)$  $(n=\dim M\geq 3)$,  carrying a non-trivial 3-form $T\in\bigwedge^{3}M$, as in introduction.\footnote{In this paper all the manifolds, tensor fields and other geometric objects under consideration, are  assumed to be smooth.}   Recall that for some $s\in\bb{R}$,  $\nabla^{s}$ is  the metric connection  with  skew-torsion $4sT$. This is defined by
	\[
 g(\nabla^{s}_{X}Y, Z)=g(\nabla^{g}_{X}Y, Z)+2sT(X, Y, Z) 
\]
 and    joins the characteristic connection  $\nabla^{1/4}\equiv \nabla^{c}$  with the Levi-Civita connection $\nabla^{0}\equiv\nabla^{g}$.  A Riemannian  manifold endowed with a metric connection $\nabla\equiv\nabla^{c}$ with totally skew-symmetric torsion $T$, is usually called Riemannian manifold with torsion and the geometry Riemann-Cartan geometry  (see for example \cite{Dalakov}). Here we will not use this notation and whenever we  refer to a triple $(M^{n}, g, T)$ we shall mean a connected Riemannian spin manifold  endowed with the above family of connections. It is   useful to consider   the 4-form \[\sigma_{T}:=\frac{1}{2}\sum_{i=1}^{n}(e_{i}\lrcorner T)\wedge (e_{i}\lrcorner T)\]   and normalize the length of the 3-form $T$ as $  \|T\|^{2}=\frac{1}{3}\sum_{i\leq i<j\leq n}g\big(T(e_{i}, e_{j}), T(e_{i}, e_{j})\big)$,   where $\{e_1, \ldots, e_n\}$ is an orthonormal frame of $(M^{n}, g)$. The Riemannian scalar curvature $\Sca^{g}$ and the scalar curvature induced by $\nabla^{s}$ are connected by the rule $\Sca^{s}=\Sca^{g}-24s^{2}\|T\|^{2}$. Moreover, inside the Clifford algebra we have that $2\sigma_{T}=\|T\|^{2}-T^{2}$, for   details and proofs see \cite{Dalakov, IvPap, FrIv, Agr03, AF, Srni, ABK}.   The lift of $\nabla^{s}$ to the spinor bundle $\Sigma$ is given by 
\[ \nabla^{s}_{X}\varphi=\nabla^{g}_{X}\varphi+s(X\lrcorner T)\cdot\varphi \]
 for any $X\in\Gamma(TM)$ and $\varphi\in\Gamma(\Sigma)$. After identifying $TM\cong T^{*}M$ via the metric tensor,  the associated Dirac operator $D^{s} : \Gamma(\Sigma)\overset{\nabla^{s}}{\to}\Gamma(TM\otimes \Sigma)\overset{\mu}{\to}\Gamma(\Sigma)$ reads 
\[ D^{s}(\varphi)=\sum_{i}e_{i}\cdot\nabla^{s}_{e_{i}}\varphi=\sum_{i}e_{i}\cdot(\nabla^{g}_{e_{i}}\varphi+s(e_{i}\lrcorner T)\cdot\varphi)=D^{g}(\varphi)+3sT\cdot\varphi,  \]
 where $D^{g}\equiv D^{0}:=\mu\circ\nabla^{0}$ is the Riemannian Dirac operator. For some $s\in\bb{R}\backslash\{0\}$ we shall denote by 
 \[
 \cal{K}^{s}(M, g)_{\zeta}:=\{\varphi\in\Gamma(\Sigma) :  \nabla^{s}_{X}\varphi=\zeta X\cdot\varphi \ \ \forall X\in\Gamma(TM)\}
 \]
  the set of all  {\it Killing spinors with torsion} (KsT) with respect to the family $\nabla^{s}=\nabla^{g}+2sT$   with   Killing number    $\zeta\neq 0$.  Similarly,   $\cal{K}(M, g)_{\kappa}$ will denote the set of Riemannian Killing spinors  with  Killing number $\kappa\neq 0$.  In general,    the  Killing number   $\zeta$ can be   a complex number; however  only solutions with $\zeta\in\bb{R}\backslash\{0\}$ are known (see also \cite{AHol}).   Next, we are mainly interested in real Killing numbers (with torsion or not).  Notice  also that  we do {\it not} view   $\nabla^{s}$-parallel spinor  as a special case of a KsT.
 We finally remark that in \cite[Def.~5.1]{ABK}, the parameter $s$ is ``fixed'', in the sense that it depends on $n=\dim M$, namely $s=(n-1)/4(n-3)$, see \cite{ABK} for more explanations. Here we relax this condition and  follow the definition of \cite{AHol}, which is more general. Let us also recall that
   \begin{definition}\textnormal{(\cite{ABK})}
 A {\it twistor spinor with torsion} (TsT)   is a   spinor field solving  the differential equation $\nabla^{s}_{X}\varphi+(1/n)X\cdot D^{s}(\varphi)=0$, for any $X\in\Gamma(TM)$, i.e. an element in the kernel of the    Penrose or twistor operator  $P^{s}$ associated to $\nabla^{s}$. This is the differential operator defined by the composition 
 \[
 \Gamma(\Sigma)\overset{\nabla^{s}}{\to}\Gamma(TM\otimes \Sigma)\overset{p}{\to}\Gamma(\ker\mu), \quad P^{s}=p\circ\nabla^{s},
 \]
   where $p : TM\otimes\Sigma\to\ker\mu\subset TM\otimes\Sigma$ is the orthogonal projection  onto the kernel of the Clifford multiplication.  Locally one has
     \[
  p(X\otimes\varphi):=X\otimes\varphi+\frac{1}{n}\sum_{i=1}^{n}e_{i}\otimes e_{i}\cdot X\cdot\varphi, \quad P^{s}\varphi:= \sum_{i=1}^{n} e_{i}\otimes\{\nabla^{s}_{e_{i}}\varphi+\frac{1}{n}e_{i}\cdot D^{s}\varphi\}. 
\]
   \end{definition}
  Any KsT with Killing number $\zeta\neq 0$ is  a  $D^{s}$-eigenspinor, i.e. $D^{s}\varphi=-n \zeta  \varphi$ and thus  a special solution of the twistor equation with torsion.  And conversely, any twistor spinor $\varphi\in\ke(P^{s})$ which is the same time a  $D^{s}$-eigenspinor, is also a  KsT.
  It is easy to see that TsT are satisfying the same basic properties with Riemannian twistor spinors \cite[Th.~3.1]{ABK}. In the following we  shall develop  a theory for  TsT, as an analogue of the Riemannian case. 

\section{Twistor spinors with torsion}\label{good2}
 \subsection{Twistor spinors with torsion} From now on, and for the following of this article  we assume that the characteristic connection $\nabla^{c}:=\nabla^{g}+(1/2)T$ satisfies the equation $\nabla^{c}T=0$.  Then, the relations $\delta{T}=0$ and $dT=2\sigma_{T}$ hold and the length $\|T\|^{2}$ is constant, see \cite{AF, Srni}.   
Moreover,  the curvature tensor $R^{s}$ is symmetric $R^{s}(X, Y, Z, W)=R^{s}(Z, W, X, Y)$ and the same holds for the Ricci tensor $\Ric^{s}(X, Y)=\sum_{i}R^{s}(X, e_{i}, e_{i}, Y)$ (since $\delta^{s}T=0$, see \cite[Thm.~B.1]{ABK}).  In fact, one can write $\Ric^{s}(X, Y)=\Ric^{g}(X, Y)-4s^{2}S(X, Y)$, where $S$ is the symmetric tensor given by $S(X, Y):=\sum_{i}g(T(X, e_{i}), T(Y, e_{i}))$. This easily occurs by combining for example the formulas \cite[p.~740]{AFer}  \begin{eqnarray*}
 R^{s}(X, Y, Z, W)&=&R^{g}(X, Y, Z, W)+4s^{2}\big[g(T(X, Y), T(Z, W))+\sigma_{T}(X, Y, Z, W)\big]\\
 &&+2s\big[\nabla_{X}^{s}T(Y, Z, W)-\nabla^{s}_{Y}T(X, Z, W)\big].
 \end{eqnarray*}
and   $\nabla^{s}_{X}T(Y, Z, V)=\frac{4s-1}{2}\sigma_{T}(Y, Z, V, X)$    \cite[Thm.~B.1]{ABK}. 
 Passing now to the spinor bundle $\Sigma$, the curvature tensor $R^{s}$ associated to the lift of $\nabla^{s}$ is defined  by $R^{s}(X, Y)\varphi=\nabla^{s}_{X}\nabla^{s}_{Y}\varphi-\nabla^{s}_{Y}\nabla^{s}_{X}\varphi-\nabla^{s}_{[X, Y]}\varphi$. In her  Phd thesis, J.~Becker-Bender  proved that the Ricci endomorphism $\Ric^{s}(X):=\sum_{i}\Ric^{s}(X, e_{i}) e_{i}\cdot\varphi$ is related with $R^{s}$ and $\sigma_{T}$ as follows  (see also \cite[Thm.~A.1]{ABK} for $s=1/4$):\begin{lemma}\textnormal{(\cite[Lem.~1.13]{Julia})} \label{JUL1}
   \[
 \sum_{i}e_{i}\cdot R^{s}(X, e_{i})\varphi=-\frac{1}{2}\Ric^{s}(X)\cdot\varphi+s(3-4s)(X\lrcorner \sigma_{T})\cdot\varphi, \quad \forall \ \varphi\in\Gamma(\Sigma), \ X\in\Gamma(TM).
 \]
 \end{lemma}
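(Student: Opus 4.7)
The target identity is a torsion-curvature generalisation of the classical Riemannian formula $\sum_{i}e_{i}\cdot R^{g}(X,e_{i})\varphi=-\tfrac{1}{2}\Ric^{g}(X)\cdot\varphi$, so the plan is to reduce to that case by decomposing $R^{s}$ into its Riemannian part plus torsion corrections and then perform the Clifford contraction piece by piece.

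First I would convert the curvature endomorphism on spinors into the Clifford-algebra expression
\[
R^{s}(X,Y)\varphi=\tfrac{1}{4}\sum_{a,b}R^{s}(X,Y,e_{a},e_{b})\,e_{a}\cdot e_{b}\cdot\varphi,
\]
and then insert the decomposition of $R^{s}$ recalled from \cite[p.~740]{AFer} immediately before the lemma, together with the substitution $\nabla^{s}_{X}T(Y,Z,W)=\tfrac{4s-1}{2}\sigma_{T}(Y,Z,W,X)$. This splits the contraction $\sum_{k}e_{k}\cdot R^{s}(X,e_{k})\varphi$ into four pieces: a Riemannian piece, a quadratic-in-$T$ piece with coefficient $4s^{2}$, a pure $\sigma_{T}$ piece with coefficient $4s^{2}$, and a derivative-of-$T$ piece that, after the substitution, is itself a $\sigma_{T}$ piece with coefficient $s(4s-1)$ (the antisymmetrisation $\nabla^{s}_{X}T-\nabla^{s}_{Y}T$ producing the sign).

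Next, the Riemannian piece collapses to $-\tfrac{1}{2}\Ric^{g}(X)\cdot\varphi$ by the standard argument using the first Bianchi identity for $R^{g}$ together with $e_{k}\cdot e_{k}=-1$. For the quadratic torsion piece I would use the identification $\tfrac{1}{2}\sum_{a,b}g(V,T(e_{a},e_{b}))\,e_{a}\cdot e_{b}=V\lrcorner T$ (acting on spinors) to rewrite this contribution as $2s^{2}\sum_{k}e_{k}\cdot\bigl(T(X,e_{k})\lrcorner T\bigr)\cdot\varphi$, and then expand via the standard Clifford identity $V\cdot(W\lrcorner T)+(W\lrcorner T)\cdot V=2\,(V\wedge W)\lrcorner T$ for vectors $V,W$ and a $3$-form $T$. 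This isolates a symmetric part proportional to $S(X)\cdot\varphi$, where $S(X,Y)=\sum_{i}g(T(X,e_{i}),T(Y,e_{i}))$, plus a remainder proportional to $(X\lrcorner\sigma_{T})\cdot\varphi$. Using $\Ric^{s}=\Ric^{g}-4s^{2}S$, the $S(X)$ contribution precisely converts $\Ric^{g}(X)$ into $\Ric^{s}(X)$ on the right-hand side.

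The remaining two $\sigma_{T}$ contributions, together with the $\sigma_{T}$-remainder from the quadratic term, are all handled uniformly by the $4$-form contraction $\sum_{a,b}\alpha(X,e_{k},e_{a},e_{b})\,e_{a}\cdot e_{b}=2\,(e_{k}\lrcorner X\lrcorner\alpha)$ followed by $\sum_{k}e_{k}\cdot(e_{k}\lrcorner\beta)\cdot\varphi=-p\,\beta\cdot\varphi$ for a $p$-form $\beta$, producing scalar multiples of $(X\lrcorner\sigma_{T})\cdot\varphi$. The main obstacle is purely arithmetical: one must verify that the coefficients $4s^{2}$, $s(4s-1)$ and the $s^{2}$-remainder from the quadratic piece combine to exactly $s(3-4s)$. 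Nothing deep is expected beyond careful bookkeeping of Clifford symmetrisations; the only genuinely non-Riemannian input is the quoted identity $\nabla^{s}T=\tfrac{4s-1}{2}\sigma_{T}$, which plays the role of the missing purely algebraic first Bianchi identity for $\nabla^{s}$.
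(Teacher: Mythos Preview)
The paper does not supply its own proof of this lemma: it is quoted verbatim from Becker-Bender's thesis \cite[Lem.~1.13]{Julia} and used as a black box. So there is no in-paper argument to compare against; your proposal is in fact \emph{more} than the paper offers.

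Your strategy is the standard and correct one: plug the decomposition of $R^{s}$ given just before the lemma into the spinorial curvature endomorphism, reduce the Riemannian piece to $-\tfrac{1}{2}\Ric^{g}(X)\cdot\varphi$ via the first Bianchi identity, use $\Ric^{s}=\Ric^{g}-4s^{2}S$ to absorb the symmetric part of the quadratic torsion term, and collect all remaining contributions as multiples of $(X\lrcorner\sigma_{T})\cdot\varphi$. This is exactly how the result is derived in \cite{Julia}.

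Two small bookkeeping points to fix. First, the contraction identity you quote has the wrong sign: in the conventions used throughout the paper one has $\sum_{k}e_{k}\cdot(e_{k}\lrcorner\beta)=p\,\beta$ for a $p$-form $\beta$ (cf.\ the relations $\sum_{i}e_{i}\cdot(e_{i}\lrcorner T)=3T$ and $\sum_{i}e_{i}\cdot(e_{i}\lrcorner\sigma_{T})=4\sigma_{T}$ used in the proof of Lemma~\ref{good}), not $-p\,\beta$. Second, when you substitute $\nabla^{s}_{X}T(Y,Z,W)=\tfrac{4s-1}{2}\sigma_{T}(Y,Z,W,X)$ into the antisymmetrised derivative term, the skew-symmetry of $\sigma_{T}$ gives $\sigma_{T}(Y,Z,W,X)-\sigma_{T}(X,Z,W,Y)=-2\sigma_{T}(X,Y,Z,W)$, so the pure $\sigma_{T}$ contributions from the $4s^{2}$ term and the derivative term combine to $(4s^{2}-2s(4s-1))\sigma_{T}=2s(1-2s)\sigma_{T}$; the missing $s$ needed to reach $s(3-4s)$ comes from the antisymmetric remainder of the quadratic $g(T(X,Y),T(Z,W))$ term after you have stripped off the $S(X)$ part. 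Getting that last $s$ out cleanly is where the careful Clifford bookkeeping actually matters, so it is worth writing that step explicitly rather than leaving it implicit.
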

We present now the Ricci curvature  associated to  the family $\nabla^{s}$, on a manifold $(M^{n}, g, T)$  carrying a twistor spinor  with torsion w.r.t. $\nabla^{s}$, i.e. $\nabla^{s}_{X}\varphi=-\frac{1}{n}X\cdot D^{s}(\varphi)$, under the condition $\nabla^{c}T=0$.  
 \begin{lemma}\label{good}
 For any twistor spinor $\varphi\in\ke(P^{s})$ and for any vector field $X$ the following relations hold:
 \begin{eqnarray*}
 -\frac{1}{2}\Ric^{s}(X)\cdot\varphi&=&-\frac{8s}{n}(X\lrcorner T)\cdot D^{s}(\varphi)+\frac{n-2}{n}\nabla^{s}_{X}\big(D^{s}(\varphi)\big)-\frac{1}{n}X\cdot (D^{s})^{2}(\varphi) -s(3-4s)(X\lrcorner \sigma_{T})\cdot\varphi.\\
 \frac{1}{2}\Sca^{s}\varphi&=&-\frac{24s}{n}T\cdot D^{s}(\varphi)+\frac{2(n-1)}{n}(D^{s})^{2}(\varphi)-4s(3-4s)\sigma_{T}\cdot\varphi.
 \end{eqnarray*}
 \end{lemma}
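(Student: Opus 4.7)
\medskip
\noindent\textbf{Proof plan.} The strategy is the classical one for twistor identities, adapted to the torsion setting: differentiate the twistor equation a second time, antisymmetrize to produce the curvature $R^{s}$, then Clifford-trace and invoke Lemma \ref{JUL1} to replace $\sum_{i} e_{i}\cdot R^{s}(X,e_{i})\varphi$ by the Ricci endomorphism plus a $\sigma_{T}$-correction. The second identity then follows from the first by taking one more Clifford trace in the free vector argument $X$.

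\medskip
\noindent\textbf{Step 1 (covariant derivative of the twistor equation).} Applying $\nabla^{s}_{X}$ to $\nabla^{s}_{Y}\varphi=-\tfrac{1}{n}Y\cdot D^{s}\varphi$ and using that the torsion of $\nabla^{s}$ on $TM$ is $4sT$, i.e. $\nabla^{s}_{X}Y-\nabla^{s}_{Y}X-[X,Y]=4sT(X,Y)$, one obtains
\[
R^{s}(X,Y)\varphi=-\tfrac{1}{n}\bigl[4sT(X,Y)\cdot D^{s}\varphi+Y\cdot\nabla^{s}_{X}D^{s}\varphi-X\cdot\nabla^{s}_{Y}D^{s}\varphi\bigr].
\]
Setting $Y=e_{i}$, multiplying on the left by $e_{i}$, and summing over $i$ produces three contractions on the right-hand side.

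\medskip
\noindent\textbf{Step 2 (Clifford contractions).} I will use three standard Clifford identities: for any $p$-form $\omega$, $\sum_{i}e_{i}\cdot(e_{i}\lrcorner\omega)=p\,\omega$; for the 3-form $T$, $\sum_{i}e_{i}\cdot T(X,e_{i})=2(X\lrcorner T)$ (viewing $T(X,e_{i})\in TM$ via the metric); and the commutation $e_{i}\cdot X=-X\cdot e_{i}-2g(e_{i},X)$, which gives $\sum_{i}e_{i}\cdot X\cdot\nabla^{s}_{e_{i}}D^{s}\varphi=-X\cdot (D^{s})^{2}\varphi-2\nabla^{s}_{X}D^{s}\varphi$. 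Together with $\sum_{i}e_{i}\cdot e_{i}=-n$, these turn the right-hand side of the expression from Step 1 into
\[
-\tfrac{8s}{n}(X\lrcorner T)\cdot D^{s}\varphi+\tfrac{n-2}{n}\nabla^{s}_{X}D^{s}\varphi-\tfrac{1}{n}X\cdot (D^{s})^{2}\varphi.
\]
Equating this with Lemma \ref{JUL1}, namely $\sum_{i}e_{i}\cdot R^{s}(X,e_{i})\varphi=-\tfrac{1}{2}\Ric^{s}(X)\cdot\varphi+s(3-4s)(X\lrcorner\sigma_{T})\cdot\varphi$, and rearranging the $\sigma_{T}$-term to the right, yields the first identity.

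\medskip
\noindent\textbf{Step 3 (scalar identity).} Substituting $X=e_{j}$ in the first identity, left-multiplying by $e_{j}$, and summing over $j$: the Ricci term collapses to $\tfrac{1}{2}\Sca^{s}\varphi$ because $\Ric^{s}$ is symmetric under $\nabla^{c}T=0$ (the antisymmetric part vanishes in the Clifford product and $\sum_{j}e_{j}\cdot e_{j}=-n$); the $(X\lrcorner T)$ and $(X\lrcorner\sigma_{T})$ terms give $3T$ and $4\sigma_{T}$ respectively by the $p$-form identity above; the third term contributes $(D^{s})^{2}\varphi$, which combines with $\tfrac{n-2}{n}(D^{s})^{2}\varphi$ from the $\nabla^{s}_{e_{j}}D^{s}\varphi$ contraction to give $\tfrac{2(n-1)}{n}(D^{s})^{2}\varphi$. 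This is exactly the second equation.

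\medskip
\noindent\textbf{Main obstacle.} The delicate point is not the structure of the argument but the Clifford-algebraic sign bookkeeping in Step 2, in particular the identity $\sum_{i}e_{i}\cdot T(X,e_{i})=2(X\lrcorner T)$, which requires distinguishing $T$ as a 3-form from $T(\cdot,\cdot)$ as a vector-valued 2-form and using the skew-symmetry of $T$ together with $e_{i}e_{k}=-e_{k}e_{i}$ for $i\neq k$. Once this and the contraction $e_{i}\cdot X\cdot(\ldots)$ are carried out correctly, the numerical coefficients $-\tfrac{8s}{n}$, $\tfrac{n-2}{n}$, $-\tfrac{1}{n}$ (respectively $-\tfrac{24s}{n}$, $\tfrac{2(n-1)}{n}$, $-4s(3-4s)$) fall out as claimed.
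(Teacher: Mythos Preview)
Your proposal is correct and follows essentially the same route as the paper: differentiate the twistor equation to obtain $R^{s}(X,Y)\varphi$ in terms of the torsion and $D^{s}\varphi$, Clifford-trace using the identities $\sum_{i}e_{i}\cdot T(X,e_{i})=2(X\lrcorner T)$ and $e_{i}\cdot X=-X\cdot e_{i}-2g(e_{i},X)$, apply Lemma~\ref{JUL1}, and then trace once more with $\sum_{i}e_{i}\cdot(e_{i}\lrcorner T)=3T$, $\sum_{i}e_{i}\cdot(e_{i}\lrcorner\sigma_{T})=4\sigma_{T}$ to get the scalar identity. The paper additionally cross-checks the scalar formula against the generalized Schr\"odinger--Lichnerowicz identity for $(D^{s})^{2}$, but this is a verification rather than a different method.
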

 \begin{proof}
 Consider a twistor spinor $\varphi\in\ke(P^{s})$ for some $s\in\bb{R}$. For some $X, Y\in\Gamma(TM)$  it is 
\begin{eqnarray*}
 \nabla^{s}_{X}\nabla^{s}_{Y}\varphi&=&-\frac{1}{n}(\nabla^{s}_{X}Y)\cdot D^{s}(\varphi)-\frac{1}{n}Y\cdot \nabla^{s}_{X}\big(D^{s}(\varphi)\big),\\
   \nabla^{s}_{Y}\nabla^{s}_{X}\varphi&=&-\frac{1}{n}(\nabla^{s}_{Y}X)\cdot D^{s}(\varphi)-\frac{1}{n}X\cdot \nabla^{s}_{Y}\big(D^{s}(\varphi)\big).
\end{eqnarray*}
Hence,  for the curvature tensor  on $\Sigma$  we compute
\[
 R^{s}(X, Y)\varphi =-\frac{1}{n}T^{s}(X, Y)\cdot D^{s}(\varphi)-\frac{1}{n}\big[Y\cdot \nabla^{s}_{X}\big(D^{s}(\varphi)\big)-X\cdot \nabla^{s}_{Y}\big(D^{s}(\varphi)\big)\big],
\]
 where $T^{s}(X, Y):=\nabla^{s}_{X}Y-\nabla^{s}_{Y}X-[X, Y]=4sT(X, Y)$ is the torsion of the family $\nabla^{s}$.  Let $\{e_{1}, \ldots, e_{n}\}$ a local orthonormal frame of $TM$. Then, multiplying by $e_{i}$ and summing we conclude that
 \begin{eqnarray*}
 \sum_{i}e_{i}\cdot R^{s}(X, e_{i})\varphi&=&-\frac{4s}{n}\sum_{i}e_{i}\cdot T(X, e_{i})\cdot D^{s}(\varphi)-\frac{1}{n}\sum_{i}e_{i}\cdot e_{i} \cdot \nabla^{s}_{X}\big(D^{s}(\varphi)\big)\\
 &&+\frac{1}{n}\sum_{i}e_{i}\cdot X\cdot \nabla^{s}_{e_{i}}\big(D^{s}(\varphi)\big)\\
 &=&-\frac{8s}{n}(X\lrcorner T)\cdot D^{s}(\varphi)+\nabla^{s}_{X}\big(D^{s}(\varphi)\big)+\frac{1}{n}\sum_{i}e_{i}\cdot X\cdot \nabla^{s}_{e_{i}}\big(D^{s}(\varphi)\big),
 \end{eqnarray*}
 where   the relation   $\sum_{i}e_{i}\cdot T(X, e_{i})=2(X\lrcorner T)$ was used (see \cite[p.~325]{ABK}). 
 Recalling that $e_{i}\cdot X+X\cdot e_{i}=-2g(e_{i}, X)\Id_{\Sigma}$, we also get
 $\frac{1}{n}\sum_{i}e_{i}\cdot X\cdot \nabla^{s}_{e_{i}}\big(D^{s}(\varphi)\big)=-\frac{1}{n}X\cdot (D^{s})^{2}(\varphi)-\frac{2}{n}\nabla^{s}_{X}\big(D^{s}(\varphi)\big)$.  Therefore, for     any   $\varphi\in\ke(P^{s})$ the following holds:
 \begin{eqnarray*}
\sum_{i}e_{i}\cdot R^{s}(X, e_{i})\varphi&=&-\frac{8s}{n}(X\lrcorner T)\cdot D^{s}(\varphi)+\frac{n-2}{n}\nabla^{s}_{X}\big(D^{s}(\varphi)\big)-\frac{1}{n}X\cdot (D^{s})^{2}(\varphi).
 \end{eqnarray*}
 Now, our first assertion  is an immediate consequence of  Lemma \ref{JUL1}.
 We proceed with the scalar curvature. Since $\Ric^{s}$ is symmetric, we have that
\[
\sum_{i}e_{i}\cdot \Ric^{s}(e_{i})\cdot\varphi=\sum_{i, j}\Ric^{s}(e_{i}, e_{j})\cdot e_{i}\cdot e_{j}\cdot\varphi=-\sum_{i}\Ric^{s}(e_{i}, e_{i})\cdot\varphi=-\Sca^{c}\cdot\varphi.
\]
But  then,   combing  with the expression of $\Ric^{s}$ and  observing that $\sum_{i}e_{i}\cdot (e_{i}\lrcorner T)=3T$, $\sum_{i}e_{i}\cdot (e_{i}\lrcorner \sigma_{T})=4\sigma_{T}$ and $\sum_{i}e_{i}\cdot \nabla^{s}_{e_{i}}\big(D^{s}(\varphi)\big)=(D^{s})^{2}(\varphi)$, we easily finish the proof.  Notice that the given formula of $\Sca^{s}$,    encodes also  the action of the square of the Dirac operator $D^{s}$ on twistor spinors. 
Indeed, in our case $\nabla^{c}T=0$ it is well-known that the following holds (see  \cite[Thm.~6.1]{AF} or \cite[Thm.~2.1]{ABK})
\[
(D^{s})^{2}(\varphi)=\Delta^{s}(\varphi)+ s(3-4s) dT\cdot\varphi-4s\cal{D}^{s}(\varphi)+\frac{1}{4}\Sca^{c}\varphi, 
\]
 where $\cal{D}^{s}:=\sum_{i}(e_{i}\lrcorner T)\cdot\nabla^{s}_{e_{i}}\varphi$. But the action of the differential operators  $\Delta^{s}:=(\nabla^{s})^{*}\nabla^{s}$ and $\cal{D}^{s}$ on some $\varphi\in\ke(P^{s})$, is given by 
 \[
 \cal{D}^{s}(\varphi)=\sum_{i}(e_{i}\lrcorner T)\cdot\nabla^{s}_{e_{i}}\varphi=-\frac{1}{n}\sum_{i}(e_{i}\lrcorner T)\cdot e_{i}\cdot D^{s}(\varphi)=-\frac{3}{n}T\cdot D^{s}(\varphi)
 \]
 and  $\Delta^{s}(\varphi)=\frac{1}{n}(D^{s})^{2}(\varphi)$, respectively. Hence, for a twistor spinor with torsion we obtain that
 \begin{eqnarray*}
 (D^{s})^{2}(\varphi)&=&\frac{n}{n-1}\big[s(3-4s) dT\cdot\varphi-4s\cal{D}^{s}(\varphi)+\frac{1}{4}\Sca^{s}\varphi\Big]\\
 &=&\frac{n}{n-1}\big[s(3-4s) dT\cdot\varphi+\frac{12s}{n}T\cdot D^{s}(\varphi)+\frac{1}{4}\Sca^{s}\varphi\Big], \quad (\natural)
  \end{eqnarray*}
 which is equivalent with the given expression of $\Sca^{s}$ (observing that $dT=2\sigma_{T}$).  
 \end{proof}

\noindent{\bf Remarks:}
 For $s=0$, i.e. for the Riemannian connection (zero torsion $T\equiv 0$),  Lemma \ref{good}  reduces to    a basic result of   A.~Lichnerowicz \cite{Lic} about   Riemannian twistor spinors (see also  \cite[pp.~23-24]{Baum},  \cite[pp.~122-123]{Fr}   or \cite[p.~134]{Ginoux}):
 \[
 \nabla^{g}_{X}\big(D^{g}(\varphi)\big)=\frac{n}{2(n-2)}\Big[-\Ric^{g}(X)\cdot\varphi+\frac{\Sca^{g}}{2(n-1)}X\cdot\varphi\Big]=\frac{n}{2}\Scho^{g}(X)\cdot\varphi, \quad (D^{g})^{2}(\varphi)=\frac{n\Sca^{g}}{4(n-1)}\varphi,
 \]
 where $\Scho^{g}(X):=\frac{1}{n-2}\big[-\Ric^{g}(X)\cdot\varphi+\frac{\Sca^{g}}{2(n-1)}X\big]$ is the endomorphism induced by the Schouten tensor of $\nabla^{g}$. For the family $\nabla^{s}$, using $(\natural)$ and  Lemma \ref{good}, we conclude that any element in $\ke(P^{s})$ satisfies the more general formula
 \begin{eqnarray}
\nabla^{s}_{X}\big(D^{s}(\varphi)\big) 
 &=&\frac{n}{2}\Scho^{s}(X)\cdot\varphi+\frac{sn}{(n-1)(n-2)}\Big[\Big(\frac{8(n-1)}{n}(X\lrcorner T)+\frac{12}{n}X\cdot T\Big)\cdot D^{s}(\varphi)\nonumber\\
 && +(3-4s)\big(X\cdot dT+(n-1)(X\lrcorner \sigma_{T})\big)\cdot\varphi\Big], \label{LIC}
 \end{eqnarray}
where here, $\Scho^{s}$ is the Schouten endomorphism associated to $\nabla^{s}$.   Notice that the formula for $(D^{g})^{2}$ immediately appears by $(\natural)$ for $s=0$, which is its analogue for connections with parallel skew-torsion.

  In the case that $\varphi\in\Gamma(\Sigma)$ is a Killing spinor with torsion,  Lemma \ref{good} applies and gives rise to  the  Ricci tensor $\Ric^{s}$ and the scalar curvature $\Sca^{s}$   of a Riemannian manifold carrying a Killing spinor with torsion with respect to $\nabla^{s}$ (as in the Riemannian case, too).  Hence, through a  method based on twistor spinors, one can now verify  formulas that are already known by \cite[Lem.~1.14]{Julia}.  We cite them in a corollary   below. 
 
   \begin{corol}\label{lemkst}\textnormal{(\cite[Lem.~1.14]{Julia})}
 Given a Killing spinor with torsion $\varphi\in\Gamma(\Sigma)$ w.r.t. the family $\nabla^{s}=\nabla^{g}+2sT$ and with Killing number $\zeta\in\bb{R}\backslash\{0\}$, the following hold:
 \begin{eqnarray*}
 \Ric^{s}(X)\cdot\varphi&=&4(n-1)\zeta^{2}X\cdot\varphi-16s\zeta(X\lrcorner T)\cdot\varphi+2s(3-4s)(X\lrcorner \sigma_{T})\cdot\varphi,\\
 \Sca^{s}\varphi&=&4n(n-1)\zeta^{2}\varphi+48s\zeta T\cdot\varphi-8s(3-4s)\sigma_{T}\cdot\varphi.
 \end{eqnarray*}
 \end{corol}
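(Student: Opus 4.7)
The plan is to exploit the fact, already observed in the excerpt, that a Killing spinor with torsion of Killing number $\zeta$ is in particular a $D^{s}$-eigenspinor with $D^{s}\varphi=-n\zeta\varphi$ and hence belongs to $\ke(P^{s})$. This reduces the corollary to a substitution into the two identities proved in Lemma~\ref{good}. Since the hypothesis $\nabla^{c}T=0$ is already in force throughout Section~\ref{good2}, both formulas of Lemma~\ref{good} are at our disposal.

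Concretely, I would first record the three auxiliary identities following from the KsT equation and the eigenspinor property:
\begin{align*}
D^{s}\varphi &= -n\zeta\varphi, \qquad (D^{s})^{2}\varphi = n^{2}\zeta^{2}\varphi,\\
\nabla^{s}_{X}\bigl(D^{s}\varphi\bigr) &= -n\zeta\,\nabla^{s}_{X}\varphi = -n\zeta^{2}\,X\cdot\varphi.
\end{align*}
Then I would substitute these into the first formula of Lemma~\ref{good}. The two $X$-linear contributions combine as $\tfrac{n-2}{n}(-n\zeta^{2})X\cdot\varphi - \tfrac{1}{n}X\cdot(n^{2}\zeta^{2}\varphi)=-2(n-1)\zeta^{2}X\cdot\varphi$, the $(X\lrcorner T)$-term becomes $8s\zeta(X\lrcorner T)\cdot\varphi$, and the $\sigma_{T}$-term is unchanged. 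Multiplying by $-2$ yields the first asserted identity. The scalar-curvature formula is obtained the same way from the second identity of Lemma~\ref{good}: the $T$-term reduces to $24s\zeta\,T\cdot\varphi$, the $(D^{s})^{2}$-term becomes $2n(n-1)\zeta^{2}\varphi$, and the $\sigma_{T}$-term remains; multiplying by $2$ gives the claim.

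There is no genuine obstacle: once Lemma~\ref{good} is in place, the derivation is a direct computation. The only point that deserves explicit mention is the justification for applying Lemma~\ref{good}, namely that a KsT is automatically a twistor spinor with torsion, which follows by Clifford-contracting the Killing equation $\nabla^{s}_{X}\varphi=\zeta X\cdot\varphi$ over an orthonormal frame to get $D^{s}\varphi=-n\zeta\varphi$ and then checking that $P^{s}\varphi=0$. Alternatively, one could bypass Lemma~\ref{good} entirely and compute $\sum_{i}e_{i}\cdot R^{s}(X,e_{i})\varphi$ directly from $\nabla^{s}_{X}\varphi=\zeta X\cdot\varphi$, using $\nabla^{s}_{Y}(X\cdot\varphi)=(\nabla^{s}_{Y}X)\cdot\varphi+\zeta X\cdot Y\cdot\varphi$ and $\sum_{i}e_{i}\cdot T(X,e_{i})=2(X\lrcorner T)$, before comparing with Lemma~\ref{JUL1}; this reproduces the same result but amounts essentially to redoing the proof of Lemma~\ref{good} under the additional KsT hypothesis, so the substitution route is the cleaner one.
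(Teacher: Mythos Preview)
Your proposal is correct and follows exactly the same approach as the paper: the paper states that the corollary ``is an immediate consequence of Lemma~\ref{good}'' (with a reference to \cite{Julia} for a direct proof), and the surrounding text makes explicit that the point is precisely to specialise the twistor-spinor formulas to the KsT case via $D^{s}\varphi=-n\zeta\varphi$. Your auxiliary identities and the arithmetic of the substitution are all correct.
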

 \begin{proof}
It is  an immediate consequence of Lemma \ref{good}. For a direct proof we refer to  \cite{Julia}.
 \end{proof}
Now, in a similar way with the Riemannian case, equation (\ref{LIC}) tell us that the twistor equation with torsion can be viewed as a parallelism equation with respect to a suitable covariant derivative on the bundle $E:=\Sigma\oplus\Sigma$. But let us explain this generalisation in full details.
\begin{lemma}
Consider the mapping $\nabla^{s, E} : \Gamma(E)\to \Gamma(T^{*}M\otimes E)$ given by
\begin{eqnarray*}
\nabla^{s, E}_{X}(\varphi_{1}\oplus\varphi_{2})&=&\Big(\nabla^{s}_{X}\varphi_{1}+\frac{1}{n}X\cdot\varphi_{2}\Big) \oplus \Big(-\frac{n}{2}\Scho^{s}(X)\cdot\varphi_{1}-\frac{sn(3-4s)}{(n-1)(n-2)}\big[X\cdot dT+(n-1)(X\lrcorner \sigma_{T})\big]\cdot\varphi_{1}\\
&&-\frac{sn}{(n-1)(n-2)}\big[\frac{8(n-1)}{n}(X\lrcorner T)+\frac{12}{n}X\cdot T\big]\cdot\varphi_{2}+\nabla^{s}_{X}\varphi_{2}\Big),
\end{eqnarray*}
for any $X\in\Gamma(TM)$ and $\varphi_{1}\oplus\varphi_{2}\in\Gamma(\Sigma\oplus\Sigma)$.  Then, $\nabla^{s, E}$ defines a covariant derivative on the vector bundle $E=\Sigma\oplus\Sigma$.
\end{lemma}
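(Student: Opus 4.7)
The plan is to verify the two defining axioms of a covariant derivative on $E = \Sigma \oplus \Sigma$: $C^{\infty}(M)$-linearity in the vector field slot, and the Leibniz rule in the section slot. A convenient way to organise the check is to decompose
\[
\nabla^{s,E} = (\nabla^{s} \oplus \nabla^{s}) + A,
\]
where $A$ collects all the remaining terms on the right-hand side of the definition. The summand $\nabla^{s} \oplus \nabla^{s}$ is manifestly a covariant derivative on $E$, since $\nabla^{s}$ is already known to be a metric connection on the spinor bundle $\Sigma$. Hence the whole task reduces to showing that $A$ is genuinely an $\Ed(E)$-valued 1-form on $M$, that is, pointwise tensorial in both $X\in\Gamma(TM)$ and $\varphi_{1}\oplus\varphi_{2}\in\Gamma(E)$.

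Reading $A_{X}(\varphi_{1}\oplus\varphi_{2})$ off the given formula, each entry is a sum of expressions of the form $\tfrac{1}{n}X\cdot\varphi_{2}$, $-\tfrac{n}{2}\Scho^{s}(X)\cdot\varphi_{1}$, $X\cdot dT\cdot\varphi_{1}$, $(X\lrcorner \sigma_{T})\cdot\varphi_{1}$, $(X\lrcorner T)\cdot\varphi_{2}$, and $X\cdot T\cdot\varphi_{2}$. Every such term is built using Clifford multiplication, interior product, and pointwise evaluation of the $\nabla^{c}$-parallel tensors $T$, $dT=2\sigma_{T}$, together with the Schouten-type endomorphism $\Scho^{s}$ associated to $\nabla^{s}$. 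Since all of these operations are $C^{\infty}(M)$-bilinear, $A$ is plainly an element of $\Omega^{1}(M,\Ed(E))$.

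It then remains to confirm the Leibniz rule. For any $f\in C^{\infty}(M)$ and $\psi\in\Gamma(\Sigma)$, one has $\nabla^{s}_{X}(f\psi)=X(f)\psi+f\nabla^{s}_{X}\psi$, whereas every summand of $A$ is $C^{\infty}(M)$-linear in the spinor argument and so simply multiplies through by $f$. Summing the contributions on each factor of $E$ yields
\[
\nabla^{s,E}_{X}\bigl(f(\varphi_{1}\oplus\varphi_{2})\bigr) = X(f)(\varphi_{1}\oplus\varphi_{2}) + f\,\nabla^{s,E}_{X}(\varphi_{1}\oplus\varphi_{2}),
\]
and a parallel calculation in the $X$-slot shows tensoriality under $X\mapsto fX$. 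The only point demanding care is bookkeeping: tracking that the coefficient $\tfrac{sn(3-4s)}{(n-1)(n-2)}$ multiplying $X\cdot dT+(n-1)(X\lrcorner\sigma_{T})$, and the coefficient multiplying $\tfrac{8(n-1)}{n}(X\lrcorner T)+\tfrac{12}{n}X\cdot T$, are treated uniformly as a single insertion of $X$, so that replacing $X$ by $fX$ extracts exactly one factor of $f$. This is routine, and consequently no genuine obstacle arises in the proof.
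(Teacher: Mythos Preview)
Your proof is correct and follows essentially the same approach as the paper: both recognise that the $\nabla^{s}$-terms supply the Leibniz rule while the remaining summands are tensorial, being Clifford multiplication by differential forms built from $T$, $dT$, $\sigma_{T}$, and the Schouten endomorphism. Your decomposition $\nabla^{s,E}=(\nabla^{s}\oplus\nabla^{s})+A$ with $A\in\Omega^{1}(M,\Ed(E))$ is simply a more explicit packaging of the paper's one-line observation that ``all the other parts are tensors (differential forms) acting by Clifford multiplication.''
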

\begin{proof}
The linearity of $\nabla^{s, E}$ is obvious. We need only to check the   rule 
\[
\nabla^{s, E}(f(\varphi_{1}\oplus\varphi_{2}))=df\otimes(\varphi_{1}\oplus\varphi_{2})+f\nabla^{s, E}(\varphi_{1}\oplus\varphi_{2}),
\]
for some smooth function $f\in C^{\infty}(M; \bb{R})$. But this is  a simple consequence of relation  $\nabla^{s}_{X}(f\varphi_{i})=X(f)\varphi_{i}+f\nabla^{s}_{X}\varphi_{i}=df(X)\otimes\varphi_{i}+f\nabla^{s}_{X}\varphi_{i}$ and the fact that all the other parts are tensors (differential forms) acting by Clifford multiplication. 
\end{proof}
Notice that for $s=0$, the connection $\nabla^{0, E}\equiv\nabla^{E}$ coincides with the connection $\nabla^{E}$   described in \cite[p.~61]{Fconf} (see also \cite[p.~25]{Baum}). We conclude that
\begin{theorem}\label{labro}
Let $(M^{n}, g, T)$ $(n\geq 3)$ a connected Riemannian spin manifold with $\nabla^{c}T=0$. Then, any  twistor spinor with torsion $\varphi\in\ke(P^{s})$ satisfies the equation $\nabla^{s, E}_{X}\big(\varphi\oplus D^{s}(\varphi)\big)=0$. Conversely, if $(\varphi\oplus\psi)\in\Gamma(E)$ is $\nabla^{s, E}$-parallel, then $\varphi$ is a twistor spinor with torsion such that $D^{s}(\varphi)=\psi$. \end{theorem}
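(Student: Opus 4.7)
The plan is to verify both directions by direct substitution into the definition of $\nabla^{s,E}$, using formula (\ref{LIC}) derived from Lemma \ref{good} as the key input.

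For the forward direction, assume $\varphi\in\ke(P^{s})$, so $\nabla^{s}_{X}\varphi=-\frac{1}{n}X\cdot D^{s}(\varphi)$ for every $X$. Write
\[
\nabla^{s, E}_{X}\big(\varphi\oplus D^{s}(\varphi)\big)=(A_{1})\oplus(A_{2}).
\]
The first component $A_{1}=\nabla^{s}_{X}\varphi+\frac{1}{n}X\cdot D^{s}(\varphi)$ vanishes immediately by the twistor equation with torsion. For the second component $A_{2}$, the definition of $\nabla^{s,E}$ gives exactly
\[
A_{2}=\nabla^{s}_{X}\big(D^{s}(\varphi)\big)-\Big\{\tfrac{n}{2}\Scho^{s}(X)\cdot\varphi+\tfrac{sn(3-4s)}{(n-1)(n-2)}\big[X\cdot dT+(n-1)(X\lrcorner\sigma_{T})\big]\cdot\varphi+\tfrac{sn}{(n-1)(n-2)}\big[\tfrac{8(n-1)}{n}(X\lrcorner T)+\tfrac{12}{n}X\cdot T\big]\cdot D^{s}(\varphi)\Big\},
\]
which is precisely the relation (\ref{LIC}) rewritten, so $A_{2}=0$. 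Hence the pair $\varphi\oplus D^{s}(\varphi)$ is $\nabla^{s,E}$-parallel.

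For the converse, suppose $(\varphi\oplus\psi)\in\Gamma(E)$ satisfies $\nabla^{s,E}_{X}(\varphi\oplus\psi)=0$ for all $X$. Vanishing of the first component reads $\nabla^{s}_{X}\varphi=-\frac{1}{n}X\cdot\psi$. Multiplying by $e_{i}$, summing over $i$, and using $\sum_{i}e_{i}\cdot e_{i}=-n\cdot\Id_{\Sigma}$, we obtain
\[
D^{s}(\varphi)=\sum_{i}e_{i}\cdot\nabla^{s}_{e_{i}}\varphi=-\tfrac{1}{n}\sum_{i}e_{i}\cdot e_{i}\cdot\psi=\psi.
\]
Substituting $\psi=D^{s}(\varphi)$ back into the first-component equation yields the twistor equation with torsion, so $\varphi\in\ke(P^{s})$.

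The only nontrivial step is recognizing that the second component of $\nabla^{s,E}$ is engineered precisely to encode the integrability identity (\ref{LIC}); this is the ``hard'' part in the sense that it hinges on Lemma \ref{good} (and in particular on the assumption $\nabla^{c}T=0$, which ensures the symmetry of $\Ric^{s}$ and the validity of the Schrödinger--Lichnerowicz-type formula used there). The rest is bookkeeping. Since the second-component expression is $\bb{R}$-linear and tensorial in $X$, no differential identity beyond (\ref{LIC}) is needed, and the equivalence is immediate.
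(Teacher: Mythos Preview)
Your argument is correct and follows exactly the same route as the paper: the first component vanishes by the twistor equation, the second component is precisely (\ref{LIC}), and the converse is obtained by Clifford-contracting the first-component equation with a local orthonormal frame to force $\psi=D^{s}(\varphi)$. The paper's proof is terser but identical in content.
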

\begin{proof}
By definition,
{\small{\[
\nabla^{s, E}_{X} = \begin{pmatrix} 
\nabla^{s}_{X}  & (1/n)X \\\\
-\frac{n}{2}\Scho^{s}(X)-\frac{sn(3-4s)}{(n-1)(n-2)}\big[X\cdot dT+(n-1)(X\lrcorner \sigma_{T})\big] & -\frac{sn}{(n-1)(n-2)}\big[\frac{8(n-1)}{n}(X\lrcorner T)+\frac{12}{n}X\cdot T\big]+\nabla^{s}_{X} 
\end{pmatrix}.
\]}}
 The twistor equation  in combination with  (\ref{LIC}) implies now the result:  $\nabla^{s, E}_{X} \begin{pmatrix} 
\varphi \\
D^{s}(\varphi)
\end{pmatrix}=0$.   The converse occurs due to the relation $\nabla^{s}_{e_{i}}\varphi+\frac{1}{n}e_{i}\cdot\psi=0$, after multiplying with $e_{i}$ and adding, see also  \cite[p.~25]{Baum} or \cite[p.~136]{Ginoux} for $s=0$.
\end{proof}
Thus, a section $\varphi\in\Gamma(\Sigma)$ is a  twistor spinor with torsion if and only if the section $\varphi\oplus D^{s}(\varphi)$ of $\Sigma\oplus\Sigma$ is $\nabla^{s, E}$-parallel. Consequently, any element $\varphi\in\ke(P^{s})$  is defined by its values $\varphi_{p}$, $(D^{s}(\varphi))_{p}$ at some point $p\in M$. Hence
\begin{corol} \label{lab1}
Let $(M^{n}, g, T)$ $(n\geq 3)$ a connected Riemannian spin manifold with $\nabla^{c}T=0$. Then,

\noindent   a) The kernel of the twistor operator is a finite dimensional space, i.e. $\dim_{\bb{C}}\ke(P^{s})\leq 2^{[\frac{n}{2}]+1}$.
 
\noindent b) If  $\varphi$ and $D^{s}(\varphi)$ vanish at some point $p\in M$ and $\varphi\in\ke(P^{s})$, then   $\varphi\equiv 0$.
\end{corol}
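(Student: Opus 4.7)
The plan is to derive both statements as direct corollaries of Theorem \ref{labro}, which identifies twistor spinors with torsion with parallel sections of the bundle $E = \Sigma \oplus \Sigma$ equipped with the connection $\nabla^{s,E}$. The crucial structural fact is that, on a connected manifold, a section parallel with respect to a linear connection is uniquely determined by its value at a single point, because parallel transport along paths produces a canonical isomorphism between fibers which carries the value at $p$ to the value at any other point.

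For part (a), I would argue as follows. By Theorem \ref{labro}, the linear map
\[
\ke(P^{s}) \longrightarrow \Gamma(E), \qquad \varphi \longmapsto \varphi \oplus D^{s}(\varphi),
\]
takes values in the space of $\nabla^{s,E}$-parallel sections, and it is clearly injective (the first component is $\varphi$). Since the fiber of $E$ over any point $p \in M$ has complex dimension $2\cdot 2^{[n/2]} = 2^{[n/2]+1}$, and the evaluation map at $p$ restricted to $\nabla^{s,E}$-parallel sections is injective (by connectedness of $M$), the space of parallel sections of $E$ has complex dimension at most $2^{[n/2]+1}$. Composing these injections yields $\dim_{\mathbb{C}} \ke(P^{s}) \leq 2^{[n/2]+1}$.

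For part (b), suppose $\varphi \in \ke(P^{s})$ and $\varphi_{p} = 0$, $(D^{s}\varphi)_{p} = 0$ at some point $p \in M$. Then the section $\varphi \oplus D^{s}(\varphi) \in \Gamma(E)$ is $\nabla^{s,E}$-parallel (by Theorem \ref{labro}) and vanishes at $p$. Parallel transport of the zero vector is zero, so this section vanishes on every point reachable from $p$ by a piecewise smooth curve, i.e. everywhere by connectedness. In particular $\varphi \equiv 0$.

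There is essentially no obstacle: the entire content is bundled into Theorem \ref{labro}, after which both assertions reduce to the elementary principle that parallel sections of a vector bundle over a connected manifold form a vector space whose dimension is bounded by the rank, and which injects into each fiber via evaluation. One could alternatively phrase (b) by invoking (a): the evaluation $\varphi \mapsto (\varphi_p, (D^s\varphi)_p)$ is a linear injection from $\ke(P^s)$ into the fiber $E_p$, so a spinor with vanishing image must be zero.
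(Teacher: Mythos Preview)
Your proof is correct and follows exactly the same approach as the paper: both parts are derived from Theorem \ref{labro} together with the standard fact that parallel sections of a vector bundle over a connected manifold are uniquely determined by their value at a single point, combined with $\rnk_{\bb{C}}(\Sigma)=2^{[n/2]}$. Your version simply spells out the argument in more detail than the paper's two-line proof.
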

\begin{proof}
 Both are  consequences of Theorem \ref{labro}.  Notice that $\rnk_{\bb{C}}(\Sigma)=2^{[\frac{n}{2}]}$ and that  parallel sections on vector bundles over connected manifolds are uniquely determined by their value in a single point.  
\end{proof}
Using Theorem \ref{labro} we     present   the corresponding version of a classical property related with the zeros of a twistor spinor, see \cite{Fconf, Baum}.

\smallskip
\noindent {\bf Remarks:} Locally,    one may  assume that  $(\nabla^{s}X)_{p}=0$ at some point $p\in M$ (or in other words $(\nabla^{s}e_{j})_{p}=0$ for some local orthonormal frame of $TM$). We mention  that this  condition   doesn't mean that   locally the torsion $T^{s}=4sT$ vanishes, see for example \cite[p.~307]{FrIv}. 
   \begin{prop}\label{isolated}
Let $(M^{n}, g, T)$ $(n\geq 3)$ be a connected Riemannian spin manifold with $\nabla^{c}T=0$. Then any  zero point of a twistor spinor  with torsion $0\neq \varphi\in\ke(P^{s})$ is isolated, i.e. the zero-set of $\varphi$ is discrete.
\end{prop}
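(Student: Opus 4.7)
The plan is to bootstrap the parallelism statement of Theorem~\ref{labro} into a non-vanishing statement for $\varphi$ on a punctured neighbourhood of any zero, using the invertibility of Clifford multiplication by non-zero tangent vectors. I may assume $\varphi \not\equiv 0$ on $M$, since otherwise there is nothing to prove. Let $p \in M$ with $\varphi(p) = 0$. By Corollary~\ref{lab1}(b) we must have $D^{s}(\varphi)(p) \neq 0$, for otherwise the $\nabla^{s,E}$-parallel section $\varphi \oplus D^{s}(\varphi)$ of $E = \Sigma \oplus \Sigma$ would vanish at $p$ and hence globally by parallel transport.

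The first component of the parallelism equation $\nabla^{s,E}_{X}(\varphi \oplus D^{s}(\varphi)) = 0$ coming from the matrix definition of $\nabla^{s,E}$ reads
\[
\nabla^{s}_{X}\varphi \;=\; -\tfrac{1}{n}\, X \cdot D^{s}(\varphi), \qquad X \in \Gamma(TM).
\]
Since $X \cdot X \cdot \eta = -|X|^{2}\eta$, Clifford multiplication by a non-zero tangent vector is invertible on $\Sigma$; combined with $D^{s}(\varphi)(p) \neq 0$ this yields $\nabla^{s}_{X}\varphi|_{p} \neq 0$ for every $0 \neq X \in T_{p}M$.

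To convert this infinitesimal information into isolation of $p$, I would fix a unit $X \in T_{p}M$, consider the $\nabla^{g}$-geodesic $\gamma_{X}(t) = \exp_{p}(tX)$, and parallel-transport $\varphi$ back to $\Sigma_{p}$ along $\gamma_{X}$ with respect to $\nabla^{s}$, obtaining $\tilde\varphi_{X}(t) := \tau_{t}^{-1}\varphi(\gamma_{X}(t))$. Then $\tilde\varphi_{X}(0) = 0$ and $\tilde\varphi_{X}'(0) = \nabla^{s}_{X}\varphi|_{p} = -\tfrac{1}{n}\, X \cdot D^{s}(\varphi)(p)$. Since Clifford multiplication by a unit vector is an isometry on $\Sigma_{p}$ (a consequence of $\langle v\cdot\eta, v\cdot\eta\rangle = -\langle\eta, v\cdot v \cdot\eta\rangle = |v|^{2}|\eta|^{2}$ for a compatible Hermitian product), one has $|\tilde\varphi_{X}'(0)| = \tfrac{1}{n}|D^{s}(\varphi)(p)|$ independently of $X \in S^{n-1}_{p}$. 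The Taylor expansion $\tilde\varphi_{X}(t) = t\,\tilde\varphi_{X}'(0) + O(t^{2})$ then forces $\tilde\varphi_{X}(t) \neq 0$, hence $\varphi(\gamma_{X}(t)) \neq 0$, for $0 < t < t_{0}$ with $t_{0}$ uniform in $X$; the exponential map at $p$ converts this into non-vanishing of $\varphi$ on a punctured geodesic ball around $p$.

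The argument is essentially the classical Riemannian proof (cf.~\cite[Thm.~4, p.~25]{Baum}) transplanted to the setting $\nabla^{c}T = 0$, with Theorem~\ref{labro} playing the role of the usual parallelism reformulation on $\Sigma \oplus \Sigma$. The one point requiring some care is the uniformity in $X$ of the lower bound on $|\tilde\varphi_{X}'(0)|$, which follows immediately from the compactness of $S^{n-1}_{p}$ together with continuity of $X \mapsto X \cdot D^{s}(\varphi)(p)$; no use is made of the explicit second row of the matrix defining $\nabla^{s,E}$.
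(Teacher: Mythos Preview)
Your argument is correct. Both proofs use Corollary~\ref{lab1}(b) to ensure $D^{s}(\varphi)(p)\neq 0$ and then exploit the twistor equation to control the first-order behaviour of $\varphi$ near $p$, but the mechanisms differ. The paper computes the $\nabla^{s}$-Hessian of $|\varphi|^{2}$ at $p$: using the formula~(\ref{LIC}) for $\nabla^{s}_{X}(D^{s}\varphi)$, it shows that all terms involving $\varphi_{p}$ vanish and obtains ${\rm Hess}_{p}^{\nabla^{s}}(|\varphi|^{2})(X,Y)=\tfrac{2}{n^{2}}|D^{s}(\varphi)|_{p}^{2}\,g_{p}(X,Y)$, so that $p$ is a non-degenerate critical point of $|\varphi|^{2}$ and hence an isolated zero. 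Your route is more elementary in that it avoids the second-order identity~(\ref{LIC}) and the Schouten-type bookkeeping entirely: you only use the first row of the $\nabla^{s,E}$-matrix (i.e.\ the twistor equation itself) together with the isometry property of Clifford multiplication by unit vectors, and a first-order Taylor expansion along geodesics. The paper's computation, on the other hand, yields the explicit Hessian formula as a by-product, which is of independent interest and parallels the classical Riemannian statement.
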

\begin{proof}
Fix some $p\in M$. We compute the Hessian ${\rm Hess}^{\nabla^{s}}$ of the function $|\varphi|^{2}$ in $p$.  We shall denote by $( \ , \ ):={\rm Re}\langle \ , \ \rangle$ the real inner product on $\Sigma$ induced by the corresponding  Hermitian inner product $\langle \ , \ \rangle$ on  $\Sigma$. For a twistor spinor $\varphi\in\ke(P^{s})$ we compute 
$X(|\varphi|^{2})=2(\nabla^{s}_{X}\varphi, \varphi)=-\frac{2}{n}(X\cdot D^{s}(\varphi), \varphi)$. Thus, in combination with the previous  remark we conclude that locally the following holds: 
\begin{eqnarray*}
{\rm Hess}_{p}^{\nabla^{s}}(|\varphi|^{2})(X, Y)&=&\big(XY(|\varphi|^{2})\big)_{p}=-\frac{2}{n}\Big[X\big(Y\cdot D^{s}(\varphi), \varphi\big)\Big]_{p}\\
&=&-\frac{2}{n}\Big[\big(\nabla^{s}_{X}(Y\cdot D^{s}(\varphi)), \varphi\big)+\big(Y\cdot D^{s}(\varphi), \nabla^{s}_{X}\varphi\big)\Big]_{p}\\
&=&-\frac{2}{n}\Big[\big(Y\cdot \nabla^{s}_{X}(D^{s}(\varphi)), \varphi\big)\Big]_{p}+\frac{2}{n^{2}}\Big[\big(Y\cdot D^{s}(\varphi), X\cdot D^{s}(\varphi)\big)\Big]_{p}.
\end{eqnarray*}
Based on (\ref{LIC}) we  show that the  first term $-\frac{2}{n}\Big[\Big(Y\cdot \nabla^{s}_{X}(D^{s}(\varphi)), \varphi\Big)\Big]_{p}$ vanishes, under the assumption   $\varphi_{p}=0$.
The cancelation  of the part related with the Schouten endomorphism is explained as follows:
\begin{eqnarray*}
- \big(\frac{1}{(n-2)}Y\cdot\big[-\Ric^{s}(X)+\frac{\Sca^{s}}{2(n-1)}X\big]\cdot\varphi, \varphi\big) &=&\frac{1}{n-2}\big(Y\cdot \Ric^{s}(X)\cdot\varphi, \varphi\big)-\frac{\Sca^{s}}{(n-1)(n-2)}\big(Y\cdot X\cdot\varphi, \varphi\big)\\
&=&-\frac{|\varphi|^{2}}{n-2}\Ric^{s}(X, Y)+\frac{|\varphi|^{2}\Sca^{s}}{(n-1)(n-2)}g(X, Y).
\end{eqnarray*}
Here, we use $\Ric^{s}(X):=\sum_{i}\Ric^{s}(X, e_{i}) e_{i}\cdot\varphi$ and the general formula $(X\cdot\varphi, Y\cdot\varphi)=g(X, Y)|\varphi|^{2}$. 
Similarly, for the next term we get
\begin{eqnarray*}
\frac{2s}{(n-1)(n-2)}\big(\big[\frac{8(n-1)}{n}(X\lrcorner T)+\frac{12}{n}X\cdot T\big]\cdot D^{s}(\varphi), Y\cdot \varphi\big),
\end{eqnarray*}  
which its evaluation in $p$ equals to zero (since $Y\cdot\varphi_{p}=0$ and $( \ , \ )={\rm Re}\langle \ , \ \rangle$) and the same is true for the last component. Hence finally, if $\varphi_{p}=0$ we conclude that the Hessian is given by 
\[
{\rm Hess}_{p}^{\nabla^{s}}(|\varphi|^{2})(X, Y)=\frac{2}{n^{2}}\Big[\big(Y\cdot D^{s}(\varphi), X\cdot D^{s}(\varphi)\big)\Big]_{p}=\frac{2}{n^{2}}(|D^{s}(\varphi)|^{2})_{p}g_{p}(X, Y).
\]
The claim now follows as in the classical case \cite[Prop.~2]{Fconf};  if $(D^{s}(\varphi))_{p}\neq 0$, then $p$ is a non-degenerate critical point of $|\varphi|^{2}$ and thus an isolated zero point of $\varphi$. If $(D^{s}(\varphi))_{p}=0$, then $\varphi$ must be trivial by Corollary \ref{lab1}.
\end{proof}

We conclude that   twistor spinors with torsion satisfy the same  structural properties with the original  twistor spinors. This should  not be   a surprising result, since there are already known   examples  of twistor spinors associated to linear connections different than $\nabla^{g}$ which still satisfy these characteristic properties. For instance, for twistor spinors on Weyl manifolds or on manifolds with a foliated structure, i.e. transversal twistor spinors,   results like Theorem \ref{labro}, Corollary \ref{lab1} and Proposition \ref{isolated} are available, see \cite[Thm.~2.7, 2.9]{Buch}, and \cite[pp.~14-15]{Ionescu}, respectively.  
However, one has to stress that the (global) behaviour of the  twistor or Killing spinor equation with torsion is in general different than the behaviour of their Riemannian  analogues, in the sense that the existence of such spinors   imposes  different consequences on the manifold, depending on the geometry and the given $G$-structure (see \cite{ABK, Julia, AHol}).  Next we shall describe a very special class of    twistor or Killing spinors with torsion, namely these which are $\nabla^{c}$-parallel   and  the same time elements of some $T$-eigenspace $\Sigma_{\gamma}$ with $\gamma\neq 0$.  

\section{Spinor fields parallel under the characteristic connection}\label{good1}

\subsection{$\nabla^{c}$-parallel spinors} 


Consider   the cubic Dirac operator  $\slashed{D}=D^{g}+\frac{1}{4}T$.  For the square of $\slashed{D}$ the following formula of   Schr\"odinger-Lichnerowicz type is well-known (apply   \cite[Thm.~2.2]{ABK} for $s=1/4$ and  see also \cite{Dalakov, AF, Cas})
\[
\slashed{D}^{2}=\Delta_{T}+\frac{1}{4}\Sca^{g}-\frac{1}{4}T^{2}+\frac{1}{8}\|T\|^{2},
\]
while for the square of $D^{c}$ it holds that (see \cite[Thm.~3.1]{FrIv})
\[
 (D^{c})^{2}(\varphi)=\Delta_{T}(\varphi)+\frac{1}{2}dT\cdot\varphi-\sum_{i}(e_{i}\lrcorner T)\cdot \nabla^{c}_{e_{i}}\varphi+\frac{1}{4}\Sca^{c}\cdot\varphi.
 \]
In both cases, $\Delta_{T}:=(\nabla^{c})^{*}\nabla^{c}:=-\sum_{i}\nabla^{c}_{e_{i}}\nabla^{c}_{e_{i}}+\nabla^{c}_{\nabla^{g}_{e_{i}}e_{i}}$ denotes the spinor Laplacian associated to the connection $\nabla^{c}$.  Because $\nabla^{c}T=0$, the square   $\slashed{D}^{2}$ commutes with the  symmetric endomorphism $T$.  In particular, $T$ acts on spinors with real constant eigenvalues \cite[Thm.~1.1]{AFNP} and the spinor bundle decomposes into a direct sum of $T$-eigenbundles preserved by $\nabla^{c}$:  $\Sigma=\bigoplus_{\gamma\in{\rm Spec}(T)}\Sigma_{\gamma}$ with $\nabla^{c}\Sigma_{\gamma}\subset\Sigma_{\gamma}$, $\forall\gamma\in{\rm Spec}(T)$. Similarly, the space of sections decomposes as $\Gamma(\Sigma)=\bigoplus_{\gamma\in{\rm Spec}(T)}\Gamma(\Sigma_{\gamma})$.    Then,  from the generalized  Schr\"odinger-Lichnerowicz formula   and for the  smallest eigenvalue  $\lambda_{1}$  of the square $\slashed{D}^{2}$  restricted on $\Sigma_{\gamma}$ it follows that (see \cite{AF, ABK})
 \[\lambda_{1}\big(\slashed{D}^{2}\big|_{\Sigma_{\gamma}}\big)\geq \frac{1}{4}\Sca_{\rm min}^{g}+\frac{1}{8}\|T\|^{2}-\frac{1}{4}\gamma^{2}:=\be_{\rm univ}(\gamma).\]
  The equality occurs if and only if $\Sca^{g}$ is constant and $\varphi$ is $\nabla^{c}$-parallel, i.e. $\nabla^{g}_{X}\varphi+\frac{1}{4}(X\lrcorner T)\cdot\varphi=0$. These are the spinors fields that we are  mainly interested in here.  A very similar formula with this  estimate holds   on the whole spinor bundle $\Sigma=\oplus_{\gamma}\Sigma_{\gamma}$, where one has to consider the maximum of the possible different eigenvalues $\{\gamma_{1}^{2}, \ldots, \gamma_{q}^{2}\}$, see \cite{ABK} for details.
  
   $\nabla^{c}$-parallel or $\nabla^{c}$-harmonic  spinor fields   have been originally studied in \cite{FrIv}, see also \cite{Dalakov, IvPap} for   4-dimensional manifolds or KT-manifolds (K\"ahler with torsion), \cite{FriedG2} for   $\G_2$-manifolds  and   \cite{AF, Cas} for a description in terms of the Casimir operator. 
With the purpose of   having the results in a uniform notation, we recall that
  \begin{theorem} \label{KNOWN1}   \textnormal{(\cite[Cor.~3.2]{FrIv}, \cite[Thm.~1.1]{AFNP}, \cite[p.~306]{ABK})}
Let $\varphi_{0}$ be a   $\nabla^{c}$-parallel spinor. Then, 
\begin{eqnarray*}
\Sca^{c} \varphi_{0}&=&-2dT\cdot\varphi_{0}=-4\sigma_{T}\cdot\varphi_{0}, \quad (\ast)\\
 \Ric^{c}(X)\cdot\varphi_{0}&=&\frac{1}{2}\big(X\lrcorner dT+\nabla^{c}_{X}T\big)\cdot\varphi_{0},
\end{eqnarray*}
 in particular  the  scalar curvatures $\Sca^{c}, \Sca^{g}$ are constant.  If  in addition  $\varphi\in\Sigma_{\gamma}$ for some $\gamma\neq 0$, then 
  \[
  \Sca^{g}=2\gamma^{2}-\frac{1}{2}\|T\|^{2}, \quad \Sca^{c}=2(\gamma^{2}-\|T\|^{2}), \quad \Ric^{c}(X)=\frac{1}{2}\big(X\lrcorner dT\big)\cdot\varphi_{0}=(X\lrcorner \sigma_{T})\cdot\varphi_{0}.\]
  \end{theorem}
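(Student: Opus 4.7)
The plan is to extract all formulas from the single input $\nabla^c\varphi_0=0$, which on the spinor bundle gives $R^c(X,Y)\varphi_0=0$ for all $X,Y$. Every statement in the theorem will then be obtained by taking Clifford traces of this vanishing curvature and by exploiting the algebraic identity $2\sigma_T=\|T\|^2-T^2$ inside $\Cl(M,g)$.

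First I would derive the Ricci formula. Setting $s=1/4$ in Lemma \ref{JUL1} gives $s(3-4s)=1/2$, so
\[
\sum_{i}e_{i}\cdot R^{c}(X,e_{i})\varphi_0=-\tfrac{1}{2}\Ric^{c}(X)\cdot\varphi_0+\tfrac{1}{2}(X\lrcorner\sigma_{T})\cdot\varphi_0.
\]
The left-hand side vanishes, and under the standing assumption $\nabla^{c}T=0$ we have $dT=2\sigma_{T}$ and $\nabla^{c}_{X}T=0$, so the identity collapses to $\Ric^{c}(X)\cdot\varphi_0=(X\lrcorner\sigma_T)\cdot\varphi_0=\tfrac{1}{2}(X\lrcorner dT)\cdot\varphi_0$, matching the second displayed equation (the $\nabla^{c}_{X}T$ term being automatically zero).

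Next I would obtain the scalar curvature identity by taking a second Clifford trace. Multiplying the Ricci equation by $e_{j}$ from the left and summing, and using symmetry of $\Ric^{c}$ (which holds because $\delta T=0$), the left-hand side becomes $\sum_{j,k}\Ric^{c}(e_{j},e_{k})e_{j}\cdot e_{k}\cdot\varphi_0=-\Sca^{c}\varphi_0$. For the right-hand side the standard degree identity $\sum_{j}e_{j}\cdot(e_{j}\lrcorner\sigma_{T})=4\sigma_{T}$ (valid because $\sigma_{T}$ is a 4-form) yields $-\Sca^{c}\varphi_0=4\sigma_{T}\cdot\varphi_0=2dT\cdot\varphi_0$, which is the first assertion $(\ast)$. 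The constancy of $\Sca^{c}$ then follows immediately by applying $\nabla^{c}_{X}$: since $\nabla^{c}\varphi_0=0$ and $\nabla^{c}\sigma_{T}=0$, differentiating $\Sca^{c}\varphi_0=-4\sigma_{T}\cdot\varphi_0$ gives $X(\Sca^{c})\varphi_0=0$; hence $\Sca^{c}$ is constant. Combined with $\Sca^{g}=\Sca^{c}+\tfrac{3}{2}\|T\|^{2}$ (the identity $\Sca^{s}=\Sca^{g}-24s^{2}\|T\|^{2}$ at $s=1/4$) and the constancy of $\|T\|^2$, also $\Sca^{g}$ is constant.

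Finally, assume $\varphi_0\in\Sigma_{\gamma}$, i.e. $T\cdot\varphi_0=\gamma\varphi_0$, so $T^{2}\cdot\varphi_0=\gamma^{2}\varphi_0$. The Clifford-algebra identity $2\sigma_{T}=\|T\|^{2}-T^{2}$ gives $2\sigma_{T}\cdot\varphi_0=(\|T\|^{2}-\gamma^{2})\varphi_0$, and substituting this into $\Sca^{c}\varphi_0=-4\sigma_{T}\cdot\varphi_0$ produces $\Sca^{c}=2(\gamma^{2}-\|T\|^{2})$ and therefore $\Sca^{g}=2\gamma^{2}-\tfrac{1}{2}\|T\|^{2}$. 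The Ricci expression reduces to $\Ric^{c}(X)\cdot\varphi_0=(X\lrcorner\sigma_{T})\cdot\varphi_0=\tfrac{1}{2}(X\lrcorner dT)\cdot\varphi_0$ as stated.

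There is no real obstacle: the whole proof is an exercise in Clifford contractions, and the only nontrivial ingredients—Lemma \ref{JUL1}, symmetry of $\Ric^{c}$ (from $\delta T=0$), and the identities $dT=2\sigma_{T}$ and $2\sigma_{T}=\|T\|^{2}-T^{2}$—are all available under the assumption $\nabla^{c}T=0$. The subtlest point is remembering that the Ricci formula \emph{in general} carries an extra $\tfrac{1}{2}\nabla^{c}_{X}T\cdot\varphi_0$ term (which is why the theorem statement keeps it), but under our running hypothesis this term vanishes identically.
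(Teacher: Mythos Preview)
Your argument is correct. The paper does not supply its own proof of this theorem; it is quoted from the literature (Friedrich--Ivanov, Agricola--Friedrich--Nagy--Puhle, Agricola--Becker-Bender--Kim), so there is no in-paper proof to compare against. Your derivation via Lemma~\ref{JUL1} at $s=1/4$ is essentially the standard computation behind those references, only packaged through the $\nabla^{s}$-family machinery already available in the paper: contracting $R^{c}(X,e_{i})\varphi_{0}=0$ against the Clifford frame gives the Ricci identity, a second contraction and $\sum_{j}e_{j}\cdot(e_{j}\lrcorner\sigma_{T})=4\sigma_{T}$ gives $(\ast)$, and the rest follows from $2\sigma_{T}=\|T\|^{2}-T^{2}$. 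The one point worth making explicit is that $\varphi_{0}$, being $\nabla^{c}$-parallel and nontrivial, has no zeros, which is what lets you deduce $X(\Sca^{c})=0$ from $X(\Sca^{c})\varphi_{0}=0$; you use this implicitly.
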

For convenience, we  shall often refer to $(\ast)$ as the {\it integrability condition} for the existence of  a $\nabla^{c}$-parallel spinor.  We also agree to use the notation {\it characteristic  spinor field}, for a spinor field  in the kernel of the Dirac operator induced by $\nabla^{c}$, i.e. a $\nabla^{c}$-harmonic spinor field.  Obviously, any $\nabla^{c}$-parallel spinor is characteristic, but the converse is false in general   \cite[Thm.~3.4]{FrIv}.
\subsection{$\nabla^{c}$-parallel spinors which are also Killing spinors}
 Let us  provide now a  necessary and sufficient condition   which allows us to decide when a $\nabla^{c}$-parallel spinor lying in $\Sigma_{\gamma}$  is a  real Killing spinor (with respect to the same metric $g$ that the equation $\nabla^{g}_{X}\varphi+\frac{1}{4}(X\lrcorner T)\cdot\varphi=0$ holds).  For $n\leq 8$, we see that this is the limiting case of a criterion about the existence of  this kind of spinors, see \cite[Lem.~4.1]{ABK} or the inequalities given in (\ref{inqilkas}) in Section \ref{finals}.  In particular, we show that if such a spinor exists and one of these inequalities   holds as an equality, then the spinor is a Killing spinor (see also  Proposition \ref{afterklik}).

\begin{prop}\label{klik}
  Let $(M^{n}, g, T)$   be a compact  connected   Riemannian spin manifold with $\nabla^{c}T=0$ and   positive scalar curvature,  carrying a non-trivial spinor field $\varphi_{0}\in\Gamma(\Sigma)$ such that
     \begin{equation}\label{fund1}
\nabla^{c}_{X}\varphi_{0}=\nabla^{g}_{X}\varphi_{0}+\frac{1}{4}(X\lrcorner T)\cdot\varphi_{0}=0,\quad \forall \ X\in\Gamma(TM),  \quad T\cdot\varphi_{0}=\gamma\varphi_{0}, \quad  \text{for some} \ \gamma\in\bb{R}\backslash\{0\}.
  \end{equation}
  Then, $\varphi_{0}$ is a real Killing spinor  (with respect to $g$)  if and only if  the (constant) eigenvalue  $0\neq\gamma\in\Spec(T)$ satisfies the equation
\[
 \gamma^{2}=\frac{4n}{9(n-1)}\Sca^{g}.\quad (\dagger)
 \]
If this is the case, then the  Killing number is given by  $\kappa:=3\gamma/4n$  and the following relation  hold 
     \begin{equation}\label{t1s}
(X\lrcorner T)\cdot \varphi_{0}+\frac{3\gamma}{n}X\cdot\varphi_{0}=0, \quad\forall \  X\in\Gamma(TM).
\end{equation} 
 For $n\leq 8$, the condition $(\dagger)$ is equivalent to  $\gamma^{2}=\frac{2n}{9-n}\|T\|^{2}$ or $\Sca^{g}=\frac{9(n-1)}{2(9-n)}\|T\|^{2}$ and if this is the case, then the action of the symmetric endomorphism $dT$ on $\varphi_{0}$ is given by $dT\cdot\varphi_{0}=-\frac{3\gamma^{2}(n-3)}{2n}\varphi_{0}$.
 \end{prop}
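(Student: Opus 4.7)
The plan is to reduce the biconditional to Friedrich's classical eigenvalue inequality for the Riemannian Dirac operator $D^g$. First, since $\nabla^c\varphi_0=0$ forces $D^c\varphi_0=0$, and the paper records $D^c=D^g+\tfrac{3}{4}T$, combining with $T\cdot\varphi_0=\gamma\varphi_0$ yields that $\varphi_0$ is a $D^g$-eigenspinor with eigenvalue $\lambda=-3\gamma/4$. Theorem \ref{KNOWN1} ensures $\Sca^g$ is constant, and by hypothesis it is positive; Friedrich's estimate then gives $\tfrac{9\gamma^{2}}{16}\geq\tfrac{n\,\Sca^g}{4(n-1)}$, equivalently $\gamma^{2}\geq\tfrac{4n\,\Sca^g}{9(n-1)}$, with equality precisely when $\varphi_0$ is a real Killing spinor. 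Therefore condition $(\dagger)$ is exactly the equality case, which establishes the equivalence.

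In the equality case, I identify the Killing number through $D^g\varphi_0=-n\kappa\varphi_0$: matching with $-n\kappa=-3\gamma/4$ yields $\kappa=3\gamma/4n$. Relation \eqref{t1s} then drops out by comparing the two expressions for $\nabla^g_X\varphi_0$: the identity $\nabla^c\varphi_0=0$ reads $\nabla^g_X\varphi_0=-\tfrac{1}{4}(X\lrcorner T)\cdot\varphi_0$, while the Killing condition gives $\nabla^g_X\varphi_0=\kappa X\cdot\varphi_0$; equating and clearing the $-1/4$ produces the stated identity.

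For the dimensional refinement $n\leq 8$, I would feed the identity $\Sca^g=2\gamma^{2}-\tfrac{1}{2}\|T\|^{2}$ from Theorem \ref{KNOWN1} into $(\dagger)$; the arithmetic collapses to $(9-n)\gamma^{2}=2n\|T\|^{2}$, which for $n\neq 9$ unpacks into both equivalent forms $\gamma^{2}=\tfrac{2n}{9-n}\|T\|^{2}$ and $\Sca^g=\tfrac{9(n-1)}{2(9-n)}\|T\|^{2}$. For the action of $dT$ on $\varphi_0$, I apply the integrability relation $\Sca^c\varphi_0=-2\,dT\cdot\varphi_0$ together with $\Sca^c=2(\gamma^{2}-\|T\|^{2})$ and the just-derived $\|T\|^{2}=\tfrac{9-n}{2n}\gamma^{2}$, which after substitution gives $dT\cdot\varphi_0=-\tfrac{3(n-3)\gamma^{2}}{2n}\varphi_0$.

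The argument contains no genuine obstacle; the only point deserving care is tracking sign conventions between $D^g$-eigenvalues and Killing numbers, and correctly invoking the equality case of Friedrich's inequality, which is why both compactness and positivity of $\Sca^g$ appear among the hypotheses. Everything else is a clean bookkeeping exercise on top of the formulas already stated in Theorem \ref{KNOWN1} and the generalized Schrödinger--Lichnerowicz machinery recalled in the preliminaries.
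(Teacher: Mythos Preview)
Your proof is correct and follows essentially the same route as the paper: both observe that $\nabla^{c}\varphi_{0}=0$ together with $T\cdot\varphi_{0}=\gamma\varphi_{0}$ makes $\varphi_{0}$ a $D^{g}$-eigenspinor with eigenvalue $-3\gamma/4$, then invoke Friedrich's characterization of real Killing spinors as the eigenspinors realizing the bound $\lambda^{2}=\tfrac{n\,\Sca^{g}}{4(n-1)}$, and finally read off $\kappa$, relation~\eqref{t1s}, and the $n\leq 8$ identities from Theorem~\ref{KNOWN1}. The only cosmetic difference is that you phrase the key step as the equality case of Friedrich's inequality, whereas the paper phrases it as matching $-3\gamma/4$ against $\pm\tfrac{1}{2}\sqrt{n\,\Sca^{g}/(n-1)}$; these are the same statement.
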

\begin{proof}
 Because $D^{c}\equiv D^{1/4}=D^{g}+\frac{3}{4}T$ and $\varphi_{0}\in\Sigma_{\gamma}$ is $\nabla^{c}$-parallel,  it follows that $\varphi_{0}$ is  an eigenspinor of the Riemannian Dirac operator $D^{g}$ (and thus also of $\slashed{D}=D^{g}+\frac{1}{4}T$):
\[
 D^{g}(\varphi_{0})=-\frac{3\gamma}{4}\varphi_{0}.
\]
 Given a compact Riemannian spin manifold with positive scalar curvature, it is well-known  that  if $\varphi_{0}\in\Gamma(\Sigma)$ is an eigenspinor of $D^{g}$ with one of the eigenvalues $\pm \frac{1}{2}\sqrt{\frac{n\Sca^{g}_{\rm min}}{n-1}}$, 
then $\varphi_{0}$ is a real Killing spinor  with corresponding Killing number $\kappa:=\mp \frac{1}{2}\sqrt{\frac{\Sca^{g}_{\rm min}}{n(n-1)}}$ (the converse is also true, see \cite[pp.~20, 31]{Baum} or \cite{Fr1980, Gr}).  However, since $\varphi_{0}$ is $\nabla^{c}$-parallel,   Theorem \ref{KNOWN1} allows us to drop the minimal condition in $\Sca^{g}$.  Hence,  let us  assume for example that $\gamma$ is positive; then $\varphi_{0}$ is   a real Killing spinor if and only if
\[
-\frac{3\gamma}{4}=-\frac{1}{2}\sqrt{\frac{n\Sca^{g}}{n-1}}, 
\]
which gives rise to the stated relation $(\dagger)$.   Then,  we also compute $\kappa=+\frac{1}{2}\sqrt{\frac{\Sca^{g}}{n(n-1)}}=3\gamma/4n$.  Similar is treated the case where $\gamma$ is negative, where one has $-\frac{3\gamma}{4}=\frac{1}{2}\sqrt{\frac{n\Sca^{g}}{n-1}}$ and the Killing  number must be given by $\kappa=-\frac{1}{2}\sqrt{\frac{\Sca^{g}}{n(n-1)}}$.   
The alternative expressions for $n<9$ easily occur using Theorem \ref{KNOWN1} and the given expression of $\gamma^{2}$; the restriction about $n$  is taken due to the positivity of $\Sca^{g}$. The relation (\ref{t1s}) is a simple consequence of the equations $\nabla^{c}_{X}\varphi_{0}=0$ and $\nabla^{g}_{X}\varphi_{0}=\frac{3\gamma}{4n}X\cdot\varphi_{0}$. Finally, for  the action of the 4-form $dT=2\sigma_{T}$ on $\varphi_{0}$, viewed as a symmetric endomorphism,  we apply  the  integrability condition for $\nabla^{c}$-parallel spinors. \end{proof}
   In particular, we observe that on a compact connected Riemannian spin manifold $(M^{n}, g, T)$, real Killing spinors with  Killing number   $\kappa=\frac{3\gamma}{4n}$,  $\nabla^{c}$-parallel spinors lying on  $\Sigma_{\gamma}$, or characteristic spinors lying in $\Sigma_{\gamma}$, if existent, are sharing a common property: {\it They are  all  eigenspinors of the Riemannian Dirac operator with the same eigenvalue $-\frac{3\gamma}{4}\neq 0$, where $0\neq \gamma\in\Spec(T)$ is a real $T$-eigenvalue. }
  \begin{example}\textnormal{Let $(M^{6}, g, J)$ be a (strict) 6-dimensional nearly K\"ahler manifold, see  Section \ref{nka} for more  details and references.  Such a manifold admits two $\nabla^{c}$-parallel spinors $\varphi^{\pm}$ lying in $\Sigma_{\gamma}$ with $\gamma=\pm 2\|T\|$ \cite{FrIv}. The scalar curvature is given by   $\Sca^{g}=\frac{15}{2}\|T\|^{2}$ and this coincides with $\frac{9(n-1)}{4n}\gamma^{2}=\frac{9(n-1)}{2(9-n)}\|T\|^{2}$.  Therefore,  by Proposition  \ref{klik} the spinors  $\varphi^{\pm}$ are real Killing spinors with $\kappa=3\gamma/4n=\pm\|T\|/4$, a well-known result by    \cite{FG, Gr}.    In particular, $dT\cdot\varphi^{\pm}=-\frac{1}{2}\Sca^{c}\cdot\varphi^{\pm}=-\frac{3\gamma^{2}(n-3)}{2n}\varphi^{\pm}$, i.e. $dT\cdot\varphi^{\pm}=-3\|T\|^{2}\varphi^{\pm}$ (see also \cite[Lem.~10.7]{FrIv}). Consider a 7-dimensional   nearly parallel $\G_2$-manifold (more details are given in Section \ref{npg2m}). There is a unique $\nabla^{c}$-parallel spinor $\varphi_{0}$ with $\gamma=-\sqrt{7}\|T\|$ and the scalar curvature   $\Sca^{g}=\frac{27}{2}\|T\|^{2}$ coincides with $\frac{9(n-1)}{4n}\gamma^{2}=\frac{9(n-1)}{2(9-n)}\|T\|^{2}$. Thus,   $\varphi_{0}$ must be  a Killing spinor with $\kappa=3\gamma/4n=-\frac{3}{4\sqrt{7}}\|T\|$, which is well-known by \cite{FKMS, FrIv}.  In particular, $dT\cdot\varphi_{0}=-\frac{3\gamma^{2}(n-3)}{2n}\varphi_{0}$, i.e. $dT\cdot\varphi_{0}=-6\|T\|^{2}\varphi_{0}$ (see also \cite[Ex.~5.2]{FrIv}).}
  \end{example}
  \smallskip
 We proceed with some further properties of $\nabla^{c}$-parallel spinors lying in $\Sigma_{\gamma}$ for some $\gamma\neq 0$.
        \begin{lemma}\label{return}
 Assume that $(M^{n}, g, T)$ is as in Proposition \ref{klik} and that $n=\dim M>3$. Let     $0\neq\varphi_{0}\in\Gamma(\Sigma)$  be a non-trivial spinor  satisfying the equations given in (\ref{fund1}). 
  Then,  $(X\wedge T)\cdot\varphi_{0}\neq 0$ needs to hold for at least a vector field $X$. 
  \end{lemma}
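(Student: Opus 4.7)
The plan is to argue by contradiction, assuming $(X\wedge T)\cdot\varphi_{0}=0$ for every $X\in\Gamma(TM)$, and then to derive two incompatible expressions for $D^{g}\varphi_{0}$.

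First I would invoke the Clifford algebra identity $X\wedge T = X\cdot T + X\lrcorner T$ (which follows from $X\cdot\alpha = X\wedge\alpha - X\lrcorner\alpha$ for a $k$-form $\alpha$, using the convention $e_{i}\cdot e_{j}+e_{j}\cdot e_{i}=-2g(e_{i},e_{j})\Id_{\Sigma}$ adopted throughout the paper). Applied to spinors this yields
\[
(X\wedge T)\cdot\varphi_{0} \;=\; X\cdot T\cdot\varphi_{0}+(X\lrcorner T)\cdot\varphi_{0} \;=\; \gamma\,X\cdot\varphi_{0}+(X\lrcorner T)\cdot\varphi_{0},
\]
where I used $T\cdot\varphi_{0}=\gamma\varphi_{0}$. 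Under the contradictory hypothesis we would therefore obtain $(X\lrcorner T)\cdot\varphi_{0}=-\gamma\,X\cdot\varphi_{0}$ for all $X$.

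Combining this with the $\nabla^{c}$-parallelism $\nabla^{g}_{X}\varphi_{0}=-\tfrac{1}{4}(X\lrcorner T)\cdot\varphi_{0}$ would give $\nabla^{g}_{X}\varphi_{0}=\tfrac{\gamma}{4}X\cdot\varphi_{0}$, i.e. $\varphi_{0}$ would be a \emph{Riemannian} Killing spinor with Killing number $\kappa=\gamma/4$. Taking the Clifford trace produces $D^{g}\varphi_{0}=-n\kappa\varphi_{0}=-\tfrac{n\gamma}{4}\varphi_{0}$.

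On the other hand, because $\nabla^{c}\varphi_{0}=0$ we have $D^{c}\varphi_{0}=0$, and the identity $D^{c}=D^{g}+\tfrac{3}{4}T$ (the case $s=1/4$ of the formula for $D^{s}$ recalled in the preliminaries) together with $T\cdot\varphi_{0}=\gamma\varphi_{0}$ forces $D^{g}\varphi_{0}=-\tfrac{3\gamma}{4}\varphi_{0}$. Equating the two expressions for $D^{g}\varphi_{0}$ gives $(n-3)\gamma=0$, which contradicts $\gamma\neq 0$ and $n>3$. This rules out the assumption and yields the claim.

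There is no serious obstacle here, since everything reduces to a single Clifford-algebraic manipulation; the only subtlety worth double-checking is the sign in $X\wedge T = X\cdot T + X\lrcorner T$, which is sensitive to the Clifford-relation convention. The dimensional hypothesis $n>3$ is precisely what makes the contradiction sharp (and explains why the parallel statement fails on $\Ss^{3}$, as anticipated by Proposition \ref{bravo}), while the compactness/positivity assumptions inherited from Proposition \ref{klik} are not actually used in this step.
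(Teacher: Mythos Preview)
Your proof is correct and matches the paper's own argument almost verbatim: the paper likewise assumes $(X\wedge T)\cdot\varphi_{0}=0$, uses $X\wedge T=X\lrcorner T+X\cdot T$ to conclude $\varphi_{0}$ would be Killing with $\kappa=\gamma/4$, and then derives the contradiction $-n\gamma/4=-3\gamma/4$ from the two computations of $D^{g}\varphi_{0}$. The paper additionally records a variant of the contradiction via scalar curvature ($\gamma^{2}=16\kappa^{2}=4\Sca^{g}/n(n-1)$ combined with $\Sca^{g}=2\gamma^{2}-\tfrac12\|T\|^{2}$), but your Dirac-eigenvalue route is exactly its ``more direct argument''.
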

  \begin{proof}
    If $\nabla^{c}\varphi_{0}=0$, then using the identity $X\wedge T=X\lrcorner T+X\cdot T$ (see \cite[p.~14]{Baum} or \cite[Appen.~C]{ABK}) we get 
     \[
 \nabla^{g}_{X}\varphi_{0}= \frac{\gamma}{4}X\cdot\varphi_{0}-\frac{1}{4}(X\wedge T)\cdot\varphi_{0}, \quad \forall X\in\Gamma(TM).
 \]
If  $(X\wedge T)\cdot\varphi_{0}=0$ for any $X$,   then  $\nabla^{g}_{X}\varphi_{0}= \frac{\gamma}{4}X\cdot\varphi_{0}$ for any $X$, which means that $\varphi_{0}$ is real Killing spinor    with  Killing number $\kappa=\gamma/4$. Thus the relation $\gamma^{2}=16\kappa^{2}$ needs to be true. We  prove   that this is contradiction.  Indeed, based on the above discussion  and using the values $\mp \frac{1}{2}\sqrt{\frac{\Sca^{g}}{n(n-1)}}$ we get the relation $\gamma^{2}=16\kappa^{2}=16\cdot \frac{\Sca^{g}}{4n(n-1)}=\frac{4\Sca^{g}}{n(n-1)}.$  But then, by Theorem \ref{KNOWN1} it follows that $\big(1-\frac{8}{n(n-1)}\big)\Sca^{g}=-\frac{1}{2}\|T\|^{2}$, which is a contradiction for any $n>3$, since $\Sca^{g}>0$ and $\|T\|^{2}>0$ ($T\neq 0$ by assumption). 
Finally,   another direct  argument   applies. If $\varphi_{0}$ is a Killing spinor with  $\kappa=\gamma/4$, then it must be an eigenspinor of the Riemannian Dirac operator $D^{g}$ with eigenvalue $-n\gamma/4$.  However, $\varphi_{0}$ is $\nabla^{c}$-parallel and belongs to $\Sigma_{\gamma}$, hence   it is also $D^{g}(\varphi_{0})=-\frac{3\gamma}{4}\varphi_{0}$. Thus, for $n>3$ this leads to a  contradiction.
\end{proof}
  

 Of course,   the Clifford multiplication of the 4-form $(X\wedge T)$ with a $\nabla^{c}$-parallel spinor $\varphi$ does not need to be an element of the subbundle  $\{X\cdot\varphi : X\in TM\}\subset \Sigma$.  In dimensions $6$ and $7$, for nearly K\"ahler manifolds and   nearly parallel $\G_2$-manifolds,  respectively,  one can prove that there exists an appropriate constant $c$ that makes the equation  $(X\wedge T)\cdot\varphi=c X\cdot\varphi$ true for any vector field $X$ and this corresponds to  the deeper  one-to-one correspondence between $\nabla^{c}$-parallel spinors and  Riemannian Killing spinors \cite{FrIv}.

\smallskip
       \begin{lemma}\label{nice}
 Let $0\neq\gamma\in\Spec(T)$ be a real non-zero $T$-eigenvalue.  Then, a real Killing spinor $\varphi\in\cal{K}(M^{n}, g)_{\frac{3\gamma}{4n}}$  is  characteristic  i.e. $D^{c}(\varphi)=0$, if and only if $\varphi\in\Sigma_{\gamma}$.  
   \end{lemma}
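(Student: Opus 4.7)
My plan is to exploit the direct relation between the Riemannian Dirac operator $D^{g}$ and the characteristic Dirac operator $D^{c}$ together with the well-known fact that a Riemannian Killing spinor is an eigenspinor of $D^{g}$. Recall from the preliminaries that for the family $\nabla^{s}$ one has $D^{s}=D^{g}+3sT$, hence for $s=1/4$ the identity $D^{c}=D^{g}+\frac{3}{4}T$ holds on any triple $(M^{n},g,T)$ with $\nabla^{c}T=0$.

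First, assuming $\varphi\in\cal{K}(M^{n},g)_{3\gamma/(4n)}$, I would apply the standard identity satisfied by a Riemannian Killing spinor with Killing number $\kappa$, namely $D^{g}(\varphi)=-n\kappa\,\varphi$ (this follows by Clifford-multiplying the Killing equation $\nabla^{g}_{X}\varphi=\kappa X\cdot\varphi$ by $e_{i}$ and summing, using $\sum_{i}e_{i}\cdot e_{i}=-n\,\Id_{\Sigma}$). With $\kappa=3\gamma/(4n)$ this yields $D^{g}(\varphi)=-\frac{3\gamma}{4}\varphi$.

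Substituting into the expression for $D^{c}$, I obtain
\[
D^{c}(\varphi)=D^{g}(\varphi)+\tfrac{3}{4}T\cdot\varphi=\tfrac{3}{4}\big(T\cdot\varphi-\gamma\varphi\big).
\]
From this identity both directions follow at once. If $\varphi\in\Sigma_{\gamma}$, then $T\cdot\varphi=\gamma\varphi$ and the right-hand side vanishes, so $\varphi$ is characteristic. Conversely, if $D^{c}(\varphi)=0$, then $T\cdot\varphi=\gamma\varphi$, which by the $T$-eigenbundle decomposition $\Sigma=\bigoplus_{\gamma'\in\Spec(T)}\Sigma_{\gamma'}$ (with pairwise orthogonal summands, since $T$ acts as a symmetric endomorphism with real eigenvalues under $\nabla^{c}T=0$) forces $\varphi\in\Sigma_{\gamma}$.

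There is no real obstacle in the argument; the only point requiring a moment of care is the use of $\nabla^{c}T=0$, which is in force by the standing assumption of Section \ref{good2} and which ensures both that $T$ acts on $\Sigma$ with real constant eigenvalues and that the decomposition $\Sigma=\oplus_{\gamma}\Sigma_{\gamma}$ is well-defined, so that the condition $T\cdot\varphi=\gamma\varphi$ is equivalent to $\varphi\in\Sigma_{\gamma}$.
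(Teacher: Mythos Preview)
Your proof is correct and follows essentially the same approach as the paper: both use the identity $D^{c}=D^{g}+\tfrac{3}{4}T$ together with $D^{g}(\varphi)=-n\kappa\varphi=-\tfrac{3\gamma}{4}\varphi$ to obtain $D^{c}(\varphi)=\tfrac{3}{4}(T\cdot\varphi-\gamma\varphi)$, from which both implications are immediate. The paper additionally verifies the same formula by computing $D^{c}(\varphi)=\sum_{i}e_{i}\cdot\nabla^{c}_{e_{i}}\varphi$ directly from $\nabla^{c}_{X}\varphi=\tfrac{1}{4}(X\lrcorner T)\cdot\varphi+\tfrac{3\gamma}{4n}X\cdot\varphi$, but this is just a second derivation of the same identity.
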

 \begin{proof}
 Consider the Dirac operator $D^{c}\equiv D^{1/4}$ associated to the characteristic connection. Then
$ D^{c}(\varphi)=D^{g}(\varphi)+\frac{3}{4}T\cdot\varphi=-\frac{3\gamma}{4}\varphi+\frac{3}{4}T\cdot\varphi$, and the same occurs using the fact that $\nabla^{c}_{X}\varphi=\frac{1}{4}(X\lrcorner T)\cdot\varphi+\frac{3\gamma}{4n}X\cdot\varphi$:
\begin{eqnarray*}
 D^{c}(\varphi)&=&\sum_{i} e_{i}\cdot \nabla^{c}_{e_{i}}\varphi=\sum_{i}e_{i}\cdot\{\frac{1}{4}(e_{i}\lrcorner T)\cdot\varphi+\frac{3\gamma}{4n}e_{i}\cdot\varphi\}=\frac{1}{4}\sum_{i}e_{i}\cdot (e_{i}\lrcorner T)\cdot\varphi+\frac{3\gamma}{4n}\sum_{i}e_{i}\cdot e_{i}\cdot\varphi\\
 &=&
 \frac{3}{4}T\cdot\varphi-\frac{3\gamma}{4}\varphi=\frac{3}{4}(T\cdot\varphi-\gamma\varphi).
\end{eqnarray*}
   \end{proof}
Athough on a compact Riemannian manifold with constant scalar curvature given by $\Sca^{g}=\frac{9(n-1)}{4n}\gamma^{2}$ for some non-zero real eigenvalue $\gamma\in\Spec(T)$, any existent $\nabla^{c}$-parallel spinor   $\varphi\in\Sigma_{\gamma}$ must be also a Killing spinor field, conversely we cannot claim that a  Killing spinor with $\kappa=3\gamma/4n$  is $\nabla^{c}$-parallel, even if it is characteristic.   From now on, we agree to denote by $\ke\big(\nabla^{c}\big)$ and $\ke\big(D^{c}\big)$ the following sets:
\[
 \ke\big(\nabla^{c}\big):=\{\varphi\in\Gamma(\Sigma_{\gamma})\subset\Gamma(\Sigma) : \nabla^{c}\varphi\equiv 0\}, \quad \ke\big(D^{c}\big):=\{\varphi\in\Gamma(\Sigma_{\gamma})\subset\Gamma(\Sigma) : D^{c}(\varphi)=0\}.
 \] 
 With the aim to construct the desired one-to-one correspondence between spinor fields in $\ke\big(\nabla^{c}\big)$ and 
$\ke\big(D^{c}\big)$, it is necessary the eigenvalues of the endomorphism   $dT+\frac{1}{2}\Sca^{c}$ on $\Sigma$ to be non-negative; this is a classical result of   Th.~Friedrich and  S.~Ivanov  \cite[Thm.~3.4]{FrIv}. In particular, for a Killing spinor $\varphi\in\cal{K}(M, g)_{\frac{3\gamma}{4n}}$ with $\varphi\in\Sigma_{\gamma}$  the relation $D^{c}(T\cdot\varphi)+T\cdot D^{c}(\varphi)=-2\sum_{i}(e_{i}\lrcorner T)\cdot \nabla^{c}_{e_{i}}\varphi$ 
 implies that  $\sum_{i}(e_{i}\lrcorner T)\cdot \nabla^{c}_{e_{i}}\varphi=0$ (see \cite[Thm.~3.3]{FrIv}). Thus, integrating   the  generalized Schr\"odinger-Lichnerowicz formula associated to $D^{c}$,  it follows that 
\[
  0=\int_{M}\Big[\|\nabla^{c}\varphi\|^{2}+\frac{1}{2}\langle dT\cdot\varphi+\frac{\Sca^{c}}{2}\varphi, \varphi\rangle\Big].
  \]
Combining now with Lemma \ref{nice} and  Proposition \ref{klik}, one deduces 

\begin{theorem}\label{newFI}
Let $(M^{n}, g, T)$ be  compact connected Riemannian spin manifold $(M^{n}, g, T)$, with $\nabla^{c}T=0$ and   positive scalar curvature given by $\Sca^{g}=\frac{9(n-1)\gamma^{2}}{4n}$ for some constant $0\neq\gamma\in\Spec(T)$. If the symmetric endomorphism $dT+\frac{1}{2}\Big[\frac{9(n-1)}{4n}\gamma^{2}-\frac{3}{2}\|T\|^{2}\Big]$ acts on $\Sigma$ with non-negative eigenvalues, then the following classes of spinors, if existent, coincide  
   \[
  \ke\big(\nabla^{c}\big) \cong \ke\big(D^{c}\big)\cong \bigoplus_{\gamma\in\Spec(T)}\Big[\Gamma(\Sigma_{\gamma})\cap \cal{K}(M^{n}, g)_{\frac{3\gamma}{4n}}\Big].
  \]
\end{theorem}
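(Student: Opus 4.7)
The plan is to assemble the three-way correspondence from three ingredients already established in the paper: Proposition \ref{klik}, Lemma \ref{nice}, and the Friedrich--Ivanov vanishing argument \cite[Thm.~3.4]{FrIv}. I would work inside a single $T$-eigenspace $\Sigma_\gamma$ and let the direct sum over $\Spec(T)$ assemble itself; note that the scalar curvature hypothesis $\Sca^g=\frac{9(n-1)\gamma^2}{4n}$ singles out only the (at most two) eigenvalues $\pm\gamma$, so the decomposition is over a finite set.

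The first preparatory step is to identify the endomorphism in the non-negativity hypothesis with $dT+\frac{1}{2}\Sca^c$. This is immediate from the universal identity $\Sca^c=\Sca^g-\frac{3}{2}\|T\|^2$ combined with the prescribed value of $\Sca^g$. Under this reformulation, the integrated Schr\"odinger--Lichnerowicz identity for $(D^c)^2$ recalled just above the theorem,
\[
0=\int_{M}\Big[\|\nabla^{c}\varphi\|^{2}+\tfrac{1}{2}\langle dT\cdot\varphi+\tfrac{\Sca^{c}}{2}\varphi,\varphi\rangle\Big],
\]
forces every $\varphi\in\ke(D^c)$ to satisfy $\nabla^c\varphi=0$ and conversely, giving the isomorphism $\ke(\nabla^c)\cong\ke(D^c)$ as in \cite[Thm.~3.4]{FrIv}.

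For the inclusion $\ke(\nabla^c)\cap\Gamma(\Sigma_\gamma)\subseteq\Gamma(\Sigma_\gamma)\cap\cal{K}(M^n,g)_{3\gamma/4n}$, I would take a $\nabla^c$-parallel spinor $\varphi\in\Gamma(\Sigma_\gamma)$ and observe that the assumption $\Sca^g=\frac{9(n-1)\gamma^2}{4n}$ is exactly condition $(\dagger)$ of Proposition \ref{klik}; therefore that proposition directly asserts that $\varphi$ is a real Killing spinor with Killing number $\kappa=3\gamma/4n$. Conversely, for $\varphi\in\Gamma(\Sigma_\gamma)\cap\cal{K}(M^n,g)_{3\gamma/4n}$, Lemma \ref{nice} produces $D^c(\varphi)=0$, i.e.\ $\varphi\in\ke(D^c)$; then the Friedrich--Ivanov step yields $\nabla^c\varphi=0$, closing the triangle.

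The main obstacle is not really mathematical but a matter of bookkeeping: one must check that the Friedrich--Ivanov vanishing argument may be applied eigenspace-wise, which rests on the identity $\sum_i (e_i\lrcorner T)\cdot\nabla^c_{e_i}\varphi=0$ for Killing spinors $\varphi\in\Gamma(\Sigma_\gamma)$ with $\kappa=3\gamma/4n$ (already recorded in the paragraph preceding the theorem via the commutator $D^c(T\cdot\varphi)+T\cdot D^c(\varphi)=-2\sum_i(e_i\lrcorner T)\cdot\nabla^c_{e_i}\varphi$). Once this is in place, the non-negativity of $dT+\frac{1}{2}\Sca^c$ simultaneously forces $\nabla^c\varphi=0$ and the vanishing of the curvature term, and the three equivalences concatenate cleanly.
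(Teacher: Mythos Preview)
Your proposal is correct and follows essentially the same route as the paper: identify the endomorphism in the hypothesis with $dT+\tfrac{1}{2}\Sca^{c}$, invoke the Friedrich--Ivanov vanishing argument \cite[Thm.~3.4]{FrIv} (via the integrated Schr\"odinger--Lichnerowicz formula and the commutator identity that kills the cross term $\sum_i(e_i\lrcorner T)\cdot\nabla^{c}_{e_i}\varphi$), and close the triangle with Proposition~\ref{klik} and Lemma~\ref{nice}. The only refinement worth noting is that the vanishing of the cross term, which you phrase for Killing spinors in $\Sigma_\gamma$, holds equally for any $\varphi\in\ke(D^{c})$ since by the paper's convention $\ke(D^{c})\subset\Gamma(\Sigma_\gamma)$ and the same commutator computation applies; this is what makes your step $\ke(\nabla^{c})\cong\ke(D^{c})$ go through directly.
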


\subsection{A basic one-to-one correspondence} 
Next we  extend the correspondence established in Theorem \ref{newFI} to $\nabla^{c}$-parallel twistor and Killing spinors with torsion.  A special version of the result that we discuss below, namely for  nearly K\"ahler manifolds in dimension 6 and only for a specific parameter $s=5/12$, is known by \cite[Thm.~6.1]{ABK}.  Here,  following  a different method  we  present a   more general version.  

Notice that any twistor spinor with torsion with respect to  $\nabla^{c}=\nabla^{1/4}$ which is   characteristic $D^{c}(\varphi)=0$, is in fact   $\nabla^{c}$-parallel. This also occurs    after applying Lemma \ref{good} for a characteristic twistor spinor;  $\frac{1}{2}\Sca^{s}\varphi=-4s(3-4s)\sigma_{T}\cdot\varphi$ and for $s=1/4$ one has $\frac{1}{2}\Sca^{c}\cdot\varphi=-2\sigma_{T}\cdot\varphi=-dT\cdot\varphi$ (notice that in dimensions $n=3, 4$ the  4-form $\sigma_{T}$ is identically equal to zero). Thus, from now on we are mainly interested in twistors with torsion for some $s\neq 1/4$.

       \begin{theorem}\label{general1}
 Let $(M^{n}, g,T)$ be a compact connected Riemannian spin manifold  with $\nabla^{c}T=0$ and  assume that $\varphi\in\Gamma(\Sigma)$ is  a spinor field such that $\nabla^{c}\varphi=0$, where   $\nabla^{c}=\nabla^{g}+\frac{1}{2}T$ is the characteristic connection. Let $\gamma\in\bb{R}\backslash\{0\}$ be a non-zero real number. Then, the following  conditions are equivalent:
 
$(a)$ $\varphi\in\Gamma(\Sigma_{\gamma})\cap\ke(P^{s}):=\ke(P^{s}\big|_{\Sigma_{\gamma}})$  with respect to the family $\{\nabla^{s} : s\in\bb{R}\backslash\{1/4\}\}$, 

$(b)$ $\varphi\in \cal{K}^{s}(M, g)_{\zeta}$  with respect to  the family $\{\nabla^{s} : s\in\bb{R}\backslash\{0, 1/4\}\}$ with $\zeta=3(1-4s)\gamma/4n$,

$(c)$ $ \varphi\in \cal{K}(M, g)_{\kappa}$ with $\kappa={3\gamma}/{4n}$.
 \end{theorem}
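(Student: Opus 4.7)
The plan is to identify a single $s$-independent algebraic condition on $\varphi$ that is simultaneously equivalent to each of $(a)$, $(b)$, $(c)$. Concretely, I expect that all three statements reduce to
\begin{equation*}
(X\lrcorner T)\cdot\varphi\;=\;-\tfrac{3\gamma}{n}\,X\cdot\varphi\qquad\text{for every}\ X\in\Gamma(TM),\qquad(\star)
\end{equation*}
which is already the relation (\ref{t1s}) encountered in the proof of Proposition \ref{klik}. Since $(\star)$ contains no reference to $s$, once the three reductions are carried out the ``for some (and thus for any) $s$'' flavour of $(a)$ and $(b)$ will be automatic.

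The first computational step is to exploit the hypotheses $\nabla^{c}\varphi=0$ and $T\cdot\varphi=\gamma\varphi$. Since on spinors $\nabla^{s}_{X}\varphi=\nabla^{g}_{X}\varphi+s(X\lrcorner T)\cdot\varphi$, the assumption $\nabla^{c}\varphi=0$ yields
\begin{equation*}
\nabla^{g}_{X}\varphi=-\tfrac{1}{4}(X\lrcorner T)\cdot\varphi,\qquad \nabla^{s}_{X}\varphi=\tfrac{4s-1}{4}(X\lrcorner T)\cdot\varphi\quad\text{for all}\ s\in\bb{R}.
\end{equation*}
Summing against $e_{i}$ and using the Clifford identity $\sum_{i}e_{i}\cdot(e_{i}\lrcorner T)=3T$, one then finds
\begin{equation*}
D^{s}(\varphi)=\sum_{i}e_{i}\cdot\nabla^{s}_{e_{i}}\varphi=\tfrac{3(4s-1)\gamma}{4}\,\varphi,
\end{equation*}
so that $\varphi$ is automatically a $D^{s}$-eigenspinor. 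The second step is to substitute these identities into each of the three defining equations. For $(c)$ the Killing equation $\nabla^{g}_{X}\varphi=\tfrac{3\gamma}{4n}X\cdot\varphi$ is literally $(\star)$ via the formula for $\nabla^{g}_{X}\varphi$. For $(b)$ at any fixed $s\neq 0,1/4$ the KsT equation becomes $\tfrac{4s-1}{4}(X\lrcorner T)\cdot\varphi=-\tfrac{3(4s-1)\gamma}{4n}X\cdot\varphi$, and cancelling the non-zero factor $(4s-1)/4$ recovers $(\star)$. For $(a)$ at any fixed $s\neq 1/4$ the twistor equation $\nabla^{s}_{X}\varphi+\tfrac{1}{n}X\cdot D^{s}(\varphi)=0$ reduces, after inserting the formulas for $\nabla^{s}_{X}\varphi$ and $D^{s}(\varphi)$, to the very same identity, hence to $(\star)$ after cancellation.

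The argument is essentially algebraic and I do not foresee any real obstacle. The only subtlety is the cancellation of the factor $(4s-1)/4$ common to both sides in the reductions of $(a)$ and $(b)$, which is legitimate precisely when $s\neq 1/4$ and thereby explains the exclusion of the characteristic value from both families. The further exclusion of $s=0$ from $(b)$ is merely notational: at $s=0$ the formula $\zeta=3(1-4s)\gamma/(4n)$ becomes $\kappa=3\gamma/(4n)$ and the KsT equation collapses to the Riemannian Killing equation of $(c)$.
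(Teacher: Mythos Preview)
Your approach is correct and essentially the same as the paper's: both pivot on the single algebraic identity $(\star)$, which is relation (\ref{t1s}) there. The paper organizes the argument as a cycle of implications $(a)\Rightarrow(b)\Rightarrow(c)$, then $(b)\Rightarrow(a)$ and $(c)\Rightarrow(b)$, while you reduce each of the three conditions directly to $(\star)$; this is a cleaner packaging of the same computation.

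One small point to tighten. You list $T\cdot\varphi=\gamma\varphi$ among the ``hypotheses'', but it is only part of condition $(a)$, not of $(b)$ or $(c)$. Your reductions for $(b)$ and $(c)$ in fact never invoke it, so those equivalences with $(\star)$ are fine as written. However, to close the loop $(b)\Rightarrow(a)$ (or $(c)\Rightarrow(a)$) you must also check that $(\star)$ by itself forces $\varphi\in\Sigma_{\gamma}$: Clifford-multiply $(\star)$ by $e_{i}$ on the left and sum, using $\sum_{i}e_{i}\cdot(e_{i}\lrcorner T)=3T$ and $\sum_{i}e_{i}\cdot e_{i}=-n$, to obtain $3T\cdot\varphi=3\gamma\varphi$. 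The paper carries out exactly this derivation of $T\cdot\varphi=\gamma\varphi$ inside its $(b)\Rightarrow(a)$ step, via a slightly different manipulation of the twistor equation; once you add this one-line contraction, your proof is complete.
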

 
\begin{proof}
By definition, it is $\nabla^{c}_{X}\varphi=\nabla^{g}_{X}\varphi+\frac{1}{4}(X\lrcorner T)\cdot\varphi$, hence we write
\begin{equation}\label{nsc}
\nabla^{s}_{X}\varphi=\nabla^{c}_{X}\varphi+\frac{4s-1}{4}(X\lrcorner T)\cdot\varphi,  \quad \forall \  X\in\Gamma(TM), \ \varphi\in\Gamma(\Sigma).
\end{equation}
Because $D^{c}\equiv D^{1/4}=D^{g}+\frac{3}{4}T$,   we also conclude that
\[
D^{s}(\varphi)=D^{g}(\varphi)+3sT\cdot\varphi=D^{c}(\varphi)+\frac{3(4s-1)}{4}T\cdot\varphi.
\]
Then, the twistor spinor equation  with respect to $\nabla^{s}$  can be expressed by  
 \[\nabla^{c}_{X}\varphi+\frac{4s-1}{4}(X\lrcorner T)\cdot\varphi+\frac{1}{n}X\cdot\big\{D^{c}(\varphi)+\frac{3(4s-1)}{4}T\cdot\varphi\big\}=0, \quad \forall \ X\in\Gamma(TM).
 \]
Assume now that $\varphi\in\Gamma(\Sigma)$ is a twistor spinor with respect to $\nabla^{s}$  which verifies the  equations   (\ref{fund1}).  %
  Then, for $s\neq \frac{1}{4}$,  we conclude that the twistor equation  is equivalent to the relation (\ref{t1s}),   namely  
 \[
 (X\lrcorner T)\cdot\varphi=-\frac{3\gamma}{n}X\cdot\varphi,  
 \]
Thus,  using (\ref{nsc})  we can easily see that $\varphi$ is a Killing  spinor with torsion with respect to the family $\{\nabla^{s} : s\neq 0, \frac{1}{4}\}$ with Killing number $\zeta:=3(1-4s)\gamma/4n$ and moreover a Riemannian Killing spinor with Killing number $\kappa=3\gamma/4n$:  
\[
\nabla^{s}_{X}\varphi=-\frac{3\gamma(4s-1)}{4n}X\cdot\varphi, \quad\text{and}\quad \nabla^{g}_{X}\varphi=\frac{3\gamma}{4n}X\cdot\varphi.
\]
 This proves the one direction $(a)\Rightarrow(b)\Rightarrow (c)$.  Consider now some $\varphi\in\cal{K}^{s}(M, g)_{\zeta}$ with $s\neq 0, \frac{1}{4}$ and $\zeta:=\frac{3\gamma(1-4s)}{4n}$   for some real parameter $\gamma\neq 0$.  Under the additional  assumption $\nabla^{c}\varphi=0$, we  will show  that $\varphi$     is an eigenspinor of  $T$, i.e.  $T\cdot\varphi=\gamma\varphi$.  Indeed, any Killing spinor field with  torsion is also a twistor spinor with torsion, hence   the twistor equation  yields the relation 
\[ \frac{4s-1}{4}(X\lrcorner T)\cdot\varphi+\frac{3(4s-1)}{4n}X\cdot T\cdot \varphi=0. \quad (\sharp)\]
On the other hand, for any $s\neq 0, \frac{1}{4}$ it holds that $\nabla^{s}_{X}\varphi=\zeta X\cdot\varphi$ with $\zeta:=\frac{3\gamma(1-4s)}{4n}\neq0$. Due to  (\ref{nsc}) we finally get
\[
\frac{4s-1}{4}(X\lrcorner T)\cdot\varphi=\zeta X\cdot\varphi, 
\]
for any  vector field $X$. Inserting this in $(\sharp)$ we see that
\[
X\cdot (\zeta \varphi+\frac{3(4s-1)}{4n} T\cdot \varphi)=0,  
\]
and our claim   follows. This shows the direction $(b)\Rightarrow (a)$ and remains to  prove also that   $(c)$ implies $(b)$.  Assume that $\varphi\in\cal{K}(M, g)_{\kappa}$ is a real Killing spinor  with Killing number $\kappa:=3\gamma/4n$ and such that $\nabla^{c}_{X}\varphi=0$ for any vector field $X$. Then, $\nabla^{g}_{X}\varphi=-\frac{1}{4}(X\lrcorner T)\cdot\varphi=\frac{3\gamma}{4n}X\cdot\varphi$, and  using (\ref{nsc}) we complete the proof. 
\end{proof}
\noindent{\bf Remarks:}  Choosing one of the conditions $(b)$ or $(c)$ in Theorem \ref{general1} for some real constant $\gamma\neq 0$, we see that the relation $T\cdot\varphi=\gamma\varphi$ follows by the twistor equation.   We also  remark that one can start with a  triple  $(M^{n}, g, T)$ with $\nabla^{c}T=0$, endowed with a real Killing spinor $\varphi\in\cal{K}(M^{n}, g)_{\frac{3\gamma}{4n}}\cap\Gamma(\Sigma_{\gamma})$ for some $0\neq \gamma\in\Spec(T)$,  and  similarly prove  that the conditions $\varphi\in\ke(\nabla^{c})$, $\varphi\in \cal{K}^{s}(M, g)_{\zeta}$ with $\zeta=3\gamma(1-4s)/4n$  and  $\varphi\in\ke(P^{s}\big|_{\Sigma_{\gamma}})$ are equivalent  one to each other. 
 
 

 \smallskip
A first  immediate  corollary of Theorem \ref{general1} is the following one:
\begin{corol}\label{appok}
 Let $(M^{n}, g, T)$ a triple with $\nabla^{c}T=0$, carrying a $\nabla^{c}$-parallel spinor field $0\neq\varphi$ satisfying one of the conditions (a), (b) or (c) in Theorem \ref{general1}.  Then, the following holds for any vector field $X$
\[ (X\wedge T)\cdot\varphi=\frac{(n-3)\gamma}{n}X\cdot\varphi.\]
 Conversely,  if $\varphi\in\Sigma_{\gamma}$ is a $\nabla^{c}$-parallel spinor satisfying the previous relation for any $X\in\Gamma(TM)$ and for some real $0\neq\gamma\in\Spec(T)$,  then  the conditions  given in Theorem \ref{general1} must hold, in particular $\varphi$ is a real Kllling spinor with respect to $g$  with Killing number $\kappa=\frac{3\gamma}{4n}$.   Finally, for $n=3$  it is  $(X\wedge T)\cdot\varphi=0$ identically, i.e. $(X\lrcorner T)\cdot\varphi=-\gamma X\cdot\varphi=-X\cdot T\cdot\varphi$.  \end{corol}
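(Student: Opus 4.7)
\smallskip

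The plan is to reduce everything to the simple Clifford-algebraic identity $X\wedge T = X\lrcorner T + X\cdot T$ (which was already invoked in the proof of Lemma \ref{return}) and to the relations that Theorem \ref{general1} and Proposition \ref{klik} give for free.

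For the forward direction, assume that $\varphi\neq 0$ is $\nabla^{c}$-parallel and satisfies one of conditions $(a)$, $(b)$, $(c)$ of Theorem \ref{general1}. Then by the equivalence in that theorem the three conditions hold simultaneously; in particular $\varphi$ belongs to some $T$-eigenspace $\Sigma_{\gamma}$ (so $T\cdot\varphi=\gamma\varphi$) and satisfies the Riemannian Killing equation $\nabla^{g}_{X}\varphi=\tfrac{3\gamma}{4n}X\cdot\varphi$. Combining $\nabla^{c}\varphi=0$ with the Riemannian Killing equation yields the identity $(X\lrcorner T)\cdot\varphi=-\tfrac{3\gamma}{n}X\cdot\varphi$ of Proposition \ref{klik} (equation (\ref{t1s})). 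Plugging both facts into $X\wedge T=X\lrcorner T+X\cdot T$ and multiplying by $\varphi$ gives
\[
(X\wedge T)\cdot\varphi \;=\; -\tfrac{3\gamma}{n}X\cdot\varphi+\gamma\, X\cdot\varphi \;=\; \tfrac{(n-3)\gamma}{n}X\cdot\varphi,
\]
which is the claimed formula.

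For the converse, suppose $\varphi\in\Sigma_{\gamma}$ is $\nabla^{c}$-parallel with $0\neq\gamma\in\Spec(T)$ and satisfies $(X\wedge T)\cdot\varphi=\tfrac{(n-3)\gamma}{n}X\cdot\varphi$ for every $X$. Using once more the identity $X\wedge T=X\lrcorner T+X\cdot T$, together with $X\cdot T\cdot\varphi=\gamma X\cdot\varphi$, I solve for the interior product:
\[
(X\lrcorner T)\cdot\varphi \;=\; \tfrac{(n-3)\gamma}{n}X\cdot\varphi-\gamma X\cdot\varphi \;=\; -\tfrac{3\gamma}{n}X\cdot\varphi.
\]
Because $\nabla^{c}\varphi=0$, the definition of the characteristic connection gives $\nabla^{g}_{X}\varphi=-\tfrac{1}{4}(X\lrcorner T)\cdot\varphi=\tfrac{3\gamma}{4n}X\cdot\varphi$, so $\varphi\in\cal{K}(M,g)_{3\gamma/4n}$. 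This is condition $(c)$ of Theorem \ref{general1}, and hence conditions $(a)$ and $(b)$ follow too.

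Finally, for $n=3$ the form $X\wedge T$ is a $4$-form on a $3$-manifold, hence identically zero, which is consistent with the coefficient $(n-3)/n$ vanishing. Thus $(X\lrcorner T)\cdot\varphi=-X\cdot T\cdot\varphi=-\gamma X\cdot\varphi$ follows directly from the decomposition identity together with $T\cdot\varphi=\gamma\varphi$. The only conceptual subtlety, if any, lies in keeping track that equation (\ref{t1s}) and the eigenspinor condition $T\cdot\varphi=\gamma\varphi$ are already built into Theorem \ref{general1}; once that is granted, both implications are one-line manipulations in the Clifford algebra.
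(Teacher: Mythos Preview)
Your argument is correct and follows exactly the paper's approach: both directions hinge on the Clifford identity $X\wedge T=X\lrcorner T+X\cdot T$ together with equation (\ref{t1s}) and the eigenspinor relation $T\cdot\varphi=\gamma\varphi$, precisely as the paper indicates. You have simply written out the converse and the $n=3$ case in more detail than the paper's terse proof, but the logic is identical.
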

 \begin{proof}
 The key ingredient  of the  proof is  encoded in  (\ref{t1s}).  Given a $\nabla^{c}$-parallel spinor  satisfying one of the conditions in Theorem \ref{general1}, then $\varphi\in\Sigma_{\gamma}$ for some $\gamma\neq 0$ and the relation (\ref{t1s}) needs to hold. Thus, the result  easily follows due to the identity $X\wedge T=X\lrcorner T+X\cdot T$. For $n=3$ we get $(X\wedge T)=0$ and so  $X\lrcorner T=-X\cdot T$ (as it should be for dimensional reasons). 
 \end{proof}
Now, similar with Lemma \ref{nice} we observe  that
 \begin{lemma}\label{nice2}
 Let $0\neq \gamma\in\Spec(T)$ be a non-zero real $T$-eigenvalue. Then the following hold:
 
 (a) A Killing spinor with torsion $\varphi\in\cal{K}^{s}(M, g)_{\frac{3\gamma(1-4s)}{4n}}$ for some $s\neq 0, 1/4$ is characteristic, if and only $\varphi\in\Sigma_{\gamma}$.
 
 (b) A twistor spinor with torsion $\varphi\in\ke\big(P^{s}|_{\Sigma_{\gamma}}\big)$  for some $s\neq  0, 1/4$ is characteristic, if and only if $\varphi$ is a $D^{s}$-eigenspinor with eigenvalue $-\frac{3\gamma(1-4s)}{4}$ i.e. $\varphi\in\cal{K}^{s}(M, g)_{\frac{3\gamma(1-4s)}{4n}}\cap \Gamma(\Sigma_{\gamma})$. In particular, for $s=0$, a twistor spinor $\varphi\in\ke\big(P^{g}|_{\Sigma_{\gamma}}\big)$ is characteristic, if and only if $D^{g}(\varphi)=-\frac{3\gamma}{4}\varphi$, i.e.  $\varphi\in\cal{K}(M, g)_{\frac{3\gamma}{4n}}\cap\Gamma(\Sigma_{\gamma})$. 
 \end{lemma}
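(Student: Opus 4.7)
The proof hinges on a single identity comparing the two Dirac operators, namely
\[
D^{c} \;=\; D^{g} + \tfrac{3}{4}T \;=\; D^{s} + \tfrac{3(1-4s)}{4}\,T,
\]
which follows directly from the formula $D^{s}=D^{g}+3sT$ already recorded in the preliminaries. Everything else reduces to acting with this identity on the spinor $\varphi$ and using the algebraic relation $T\cdot\varphi=\gamma\varphi$ on $\Sigma_{\gamma}$.

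\smallskip
For part (a), I will start from a KsT $\varphi\in\cal{K}^{s}(M,g)_{\zeta}$ with $\zeta=\tfrac{3\gamma(1-4s)}{4n}$ and $s\neq 0,1/4$. Since every KsT is a $D^{s}$-eigenspinor with eigenvalue $-n\zeta$, one has $D^{s}\varphi=-\tfrac{3\gamma(1-4s)}{4}\varphi$. Plugging into the identity above gives
\[
D^{c}\varphi \;=\; \tfrac{3(1-4s)}{4}\bigl(T\cdot\varphi-\gamma\,\varphi\bigr).
\]
Because $s\neq 1/4$, the prefactor is nonzero, so $D^{c}\varphi=0$ if and only if $T\cdot\varphi=\gamma\varphi$, i.e.\ $\varphi\in\Gamma(\Sigma_{\gamma})$. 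This is exactly the required equivalence.

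\smallskip
For part (b), let $\varphi\in\ke\bigl(P^{s}|_{\Sigma_{\gamma}}\bigr)$, so in particular $T\cdot\varphi=\gamma\varphi$. Applying the same identity to $\varphi$ yields
\[
D^{c}\varphi \;=\; D^{s}\varphi + \tfrac{3\gamma(1-4s)}{4}\,\varphi.
\]
Thus $D^{c}\varphi=0$ is equivalent to $D^{s}\varphi=-\tfrac{3\gamma(1-4s)}{4}\varphi$, which identifies $\varphi$ as a $D^{s}$-eigenspinor of the asserted eigenvalue. By the general fact recalled right after the definition of TsT (a twistor spinor with torsion which is simultaneously a $D^{s}$-eigenspinor is automatically a Killing spinor with torsion with Killing number $-\tfrac{1}{n}$ times that eigenvalue), this places $\varphi$ in $\cal{K}^{s}(M,g)_{\frac{3\gamma(1-4s)}{4n}}\cap\Gamma(\Sigma_{\gamma})$. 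The converse direction is immediate from (a). Finally, specializing the same computation to $s=0$ reduces $D^{c}=D^{g}+\tfrac{3}{4}T$ and gives $D^{c}\varphi=D^{g}\varphi+\tfrac{3\gamma}{4}\varphi$ on $\Sigma_{\gamma}$, yielding the stated criterion.

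\smallskip
There is no real obstacle here: the argument is a clean algebraic manipulation on the two different Dirac operators, and the only substantive input beyond the $T$-eigenspace assumption is the standard observation that a TsT which is a $D^{s}$-eigenspinor must be a KsT. The mild care point is keeping track of the factor $\tfrac{3(1-4s)}{4}$ and using $s\neq 1/4$ at the right moment so as not to divide by zero.
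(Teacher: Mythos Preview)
Your proof is correct and follows essentially the same approach as the paper: both arguments use the identity $D^{c}=D^{s}+\tfrac{3(1-4s)}{4}T$ together with $D^{s}\varphi=-n\zeta\varphi$ for (a) and with $T\cdot\varphi=\gamma\varphi$ for (b), then invoke the standard fact that a TsT which is a $D^{s}$-eigenspinor is a KsT. The paper additionally offers a slight variant for (b) by rewriting the twistor equation itself in terms of $D^{c}$, but it then records your more direct computation as well.
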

 \begin{proof}
 (a) The first claim is an simple consequence of $D^{s}(\varphi)=D^{c}(\varphi)+\frac{3(4s-1)}{4}T\cdot\varphi$ and the fact that $\varphi$ is an eigenspinor of $D^{s}$ with eigenvalue $-\frac{3\gamma(1-4s)}{4}$.
 
 (b) Consider a twistor spinor with torsion $\varphi\in\ke(P^{s})$  for some $s\neq 0, 1/4$ such that $T\cdot\varphi=\gamma\varphi$. We write the twistor equation as
\[
 \nabla^{s}_{X}\varphi+\frac{1}{n}X\cdot D^{c}(\varphi)+\frac{3(4s-1)}{4n}X\cdot T\cdot\varphi=0,
\]
 which is equivalent to
 \[
 \frac{1}{n}X\cdot\big(D^{c}-D^{s}\big)(\varphi)=-\frac{3\gamma(4s-1)}{4n}X\cdot\varphi.
 \]
  Since the latter equation holds for any vector field $X$, we easily conclude.  A more direct way is given as follows: Suppose that   $\varphi\in\ke(P^{s}\big|_{\Sigma_{\gamma}})$ is in addition characteristic, i.e. $D^{c}(\varphi)=0$. Then
   \[
 \nabla^{s}_{X}\varphi=-\frac{1}{n}X\cdot D^{s}(\varphi)=-\frac{1}{n}X\cdot\big[D^{c}(\varphi)+\frac{3(4s-1)}{4}T\cdot\varphi\big]=\frac{3\gamma(1-4s)}{4n}X\cdot\varphi,
 \]
  i.e.  $\varphi\in\cal{K}^{s}(M, g)_{\frac{3\gamma(1-4s)}{4n}}$ and the converse is obvious. Similarly for $s=0$.
 \end{proof}
Combining Theorem \ref{newFI} with Theorem \ref{general1} and Lemma \ref{nice2}, we take the following extension.
  \begin{corol}\label{addd} 
 Let $(M^{n}, g, T)$ be  compact connected Riemannian spin manifold $(M^{n}, g, T)$, with $\nabla^{c}T=0$ and  positive scalar curvature given by $\Sca^{g}=\frac{9(n-1)\gamma^{2}}{4n}$ for some constant $0\neq\gamma\in\Spec(T)$. If the symmetric endomorphism  $dT+\frac{1}{2}\Big[\frac{9(n-1)}{4n}\gamma^{2}-\frac{3}{2}\|T\|^{2}\Big]$ acts on $\Sigma$ with non-negative eigenvalues, then the following classes of spinors, if existent, coincide 
   {\small{\[
 \ke(\nabla^{c})\cong \bigoplus_{\gamma\in\Spec{(T)}}\Big[\Gamma(\Sigma_{\gamma})\cap\cal{K}(M, g)_{\frac{3\gamma}{4n}}\Big]\cong\bigoplus_{\gamma\in\Spec{(T)}} \Big[\Gamma(\Sigma_{\gamma})\cap\cal{K}^{s}(M, g)_{\frac{3(1-4s)\gamma}{4n}}\Big]\cong \bigoplus_{\gamma\in\Spec{(T)}} \Big[\ke(P^{s}\big|_{\Sigma_{\gamma}})\cap\ke\big(D^{c}\big)\Big].
 \]}}
 Here, the parameter  $s$ takes values in $\bb{R}\backslash\{0, 1/4\}$ for the third set, and for the last set it is $s\in\bb{R}\backslash\{1/4\}$. 

 \end{corol}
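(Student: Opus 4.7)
The plan is to chain together the three ingredients already assembled: Theorem \ref{newFI}, Theorem \ref{general1}, and Lemma \ref{nice2}, using the integrability hypothesis on $dT + \frac{1}{2}[\frac{9(n-1)}{4n}\gamma^{2} - \frac{3}{2}\|T\|^{2}]$ to run the Friedrich--Ivanov integration argument when needed. Since the spinor bundle splits as $\Sigma = \bigoplus_{\gamma \in \Spec(T)} \Sigma_{\gamma}$ and is preserved by $\nabla^{c}$, every claim reduces to an identification component-by-component, so I would fix a non-zero $\gamma \in \Spec(T)$ and a section $\varphi \in \Gamma(\Sigma_{\gamma})$.

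The isomorphism $\ke(\nabla^{c}) \cong \bigoplus_{\gamma}[\Gamma(\Sigma_{\gamma}) \cap \cal{K}(M,g)_{3\gamma/4n}]$ is exactly Theorem \ref{newFI} (Proposition \ref{klik} gives the forward direction once the scalar curvature hypothesis $\Sca^{g} = 9(n-1)\gamma^{2}/4n$ is in force, and the converse comes from Lemma \ref{nice} together with the integration argument preceding Theorem \ref{newFI}). For the second isomorphism, the implication $\nabla^{c}\varphi = 0 \Rightarrow \varphi \in \cal{K}^{s}(M,g)_{3(1-4s)\gamma/4n}$ is the $(c)\Rightarrow(b)$ step of Theorem \ref{general1}. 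For the converse, I observe that Lemma \ref{nice2}(a) identifies such a KsT with a characteristic spinor in $\Sigma_{\gamma}$, and the key identity $\sum_{i}(e_{i} \lrcorner T) \cdot \nabla^{c}_{e_{i}}\varphi = 0$ survives for any characteristic spinor in $\Sigma_{\gamma}$ (just because $T \cdot \varphi = \gamma \varphi$ gives $D^{c}(T \cdot \varphi) + T \cdot D^{c}(\varphi) = 0$); then integrating the Schr\"odinger--Lichnerowicz formula for $D^{c}$ yields $\nabla^{c} \varphi = 0$ from the eigenvalue assumption on $dT + \frac{1}{2}\Sca^{c}$, exactly as in the proof of Theorem \ref{newFI}.

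For the final isomorphism, I would argue in both directions. If $\varphi \in \ke(\nabla^{c}) \cap \Gamma(\Sigma_{\gamma})$, then $D^{c}\varphi = 0$ trivially and Theorem \ref{general1}$(c)\Rightarrow(a)$ places $\varphi$ in $\ke(P^{s}|_{\Sigma_{\gamma}})$ for every $s \neq 1/4$, including $s = 0$. Conversely, let $\varphi \in \ke(P^{s}|_{\Sigma_{\gamma}}) \cap \ke(D^{c})$. Lemma \ref{nice2}(b), applied separately to $s \neq 0, 1/4$ and to $s = 0$, upgrades $\varphi$ either to a KsT $\varphi \in \cal{K}^{s}(M,g)_{3(1-4s)\gamma/4n} \cap \Gamma(\Sigma_{\gamma})$ or to a Riemannian Killing spinor $\varphi \in \cal{K}(M,g)_{3\gamma/4n} \cap \Gamma(\Sigma_{\gamma})$; either conclusion feeds back into one of the two isomorphisms already established, producing $\nabla^{c}\varphi = 0$.

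The only non-cosmetic point is verifying that the Friedrich--Ivanov integration argument still applies when starting from a KsT rather than from a Riemannian Killing spinor. This is not really an obstacle, as I explained above, because the hypothesis $\varphi \in \Sigma_{\gamma} \cap \ke(D^{c})$ suffices to kill $\sum_{i}(e_{i} \lrcorner T) \cdot \nabla^{c}_{e_{i}}\varphi$; what does need care is the bookkeeping of which parameter ranges for $s$ are admissible in each of the four sets (the third set excludes both $s = 0$ and $s = 1/4$, while the fourth excludes only $s = 1/4$), and the reason the exclusions are forced is precisely the failure of the twistor/Killing equivalence at these two distinguished values, as already isolated in Theorem \ref{general1} and Lemma \ref{nice2}.
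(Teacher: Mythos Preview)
Your proposal is correct and follows the same route as the paper, which simply records the corollary as a combination of Theorem \ref{newFI}, Theorem \ref{general1}, and Lemma \ref{nice2}. Two small economies are available: for the converse directions $3\Rightarrow 1$ and $4\Rightarrow 1$ you re-run the Friedrich--Ivanov integration argument and invoke Lemma \ref{nice2}(b), but Theorem \ref{newFI} already gives $\ke(D^{c})=\ke(\nabla^{c})$ under the stated eigenvalue hypothesis, so once Lemma \ref{nice2}(a) (resp.\ the bare inclusion $\varphi\in\ke(D^{c})$) places $\varphi$ in $\ke(D^{c})\cap\Gamma(\Sigma_{\gamma})$ you can cite Theorem \ref{newFI} directly rather than reproving it.
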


\section{Examples}\label{exex} 
The most representative classes of special structures for which  Theorems \ref{newFI}, \ref{general1} and  Corollaries \ref{appok}, \ref{addd} make sense, are    6-dimensional    nearly K\"ahler manifolds  and 7-dimensional   nearly parallel $\G_2$-manifolds. Of course the same holds for Proposition \ref{klik}, which was the starting  point of this theoretical approach.   Let us describe these special structures in some detail.
\subsection{Nearly K\"ahler manifolds and their spinorial properties}\label{nka}
  
         
    A nearly K\"ahler manifold   is an almost Hermitan manifold endowed with an almost complex structure $J$ such that  $(\nabla^{g}_{X}J)X=0$.  Next we need to recall basic results from  \cite{FrIv, FG, Gr, Baum, Cas}, where we refer    for more details and proofs.  
  In dimension 6 strict nearly K\"ahler   manifolds $(M, g, J)$  are very special;   they are spin,  the  first Chern class vanishes $c_{1}(M^{6}, J)=0$ and $g$ is an Einstein metric. In the homogeneous case, 6-dimensional nearly K\"ahler manifolds  are exhausted by the 3-symmetric spaces $\bb{S}^{6}=\G_2/\SU_{3}$, $\bb{C}P^{3}=\SO_{5}/\U_2=\Sp_{2}/(\Sp_{1}\times \U_{1}),  \ \bb{F}_{1, 2}=\SU_{3}/T_{\rm max}$ and   $\Ss^{3}\times\Ss^{3}=\SU_{2}\times\SU_{2}$, endowed with   a naturally reductive  (Einstein) metric  \cite{Butr}. Together with the  standard spheres $\Ss^{2m}$,    these spaces     exhaust all even-dimensional Riemannian manifolds  admitting real Killing spinors. 
Notice that recently  in \cite{Cortes}, a {\it locally homogeneous} nearly K\"ahler manifold of the form $M=\tilde{M}/\Gamma$ was described, where $\tilde{M}=\Ss^{3}\times\Ss^{3}$ and $\Gamma$  is any finite  subgroup of $\SU_{2}\times\SU_{2}$.
 
   Now, any nearly K\"ahler manifold  admits a characteristic connection $\nabla^{c}$ with parallel skew-torsion, given by $T(X, Y):=(\nabla^{g}_{X}J)JY$ (Gray connection) \cite[Thm.~10.1]{FrIv}.  In particular,  in dimension 6  there exists a positive  constant  $\tau_{0}\neq 0$ such that  (see \cite{FrIv, Cas, AFNP})
     \[
     \|T\|^{2}=2\tau_{0},\quad \Sca^{g}=15\tau_{0},\quad \Sca^{c}=12\tau_{0},\quad  \Ric^{g}=\frac{5}{2}\tau_{0}\Id.
     \] 
 Notice that working with an even dimensional manifold $M^{2m}$ of constant positive  scalar curvature,    the spinor bundle splits $\Sigma=\Sigma^{+}\oplus\Sigma^{-}$  and  there is a bijection between the subbundles 
  \[
  E_{\pm}=\{\varphi\in\Gamma(\Sigma) : \nabla^{g}_{X}\varphi\pm \kappa X\cdot\varphi=0, \  \forall X\in \Gamma(TM)\},
  \]
    given by the map $\varphi^{+}:=\psi^{+}+\psi^{-}\mapsto\varphi^{-}:=\psi^{+}-\psi^{-}$ for some $\varphi^{+}\in E_{+}$ and $\psi^{\pm}\in\Sigma^{\pm}$, where $\kappa$ is given as in the proof of Proposition \ref{klik}. 
 If $M^{2m}\neq\Ss^{2m}$, then $\dim_{\bb{C}}E_{\pm}\leq 2^{m-1}=\dim_{\bb{C}}\Delta_{2m}^{\pm}$ while for the standard spheres $\Ss^{2m}$ we have the characterization  $\dim_{\bb{C}}E_{\pm}=2^{m}$ \cite{Fr1980}.   
  For a 6-dimensional nearly K\"ahler manifold it is well-known   that there are two Riemannian Killing spinors $\varphi^{\pm}$, i.e. $\dim_{\bb{C}}E_{+}=\dim_{\bb{C}}E_{-}=1$ \cite{FG, Gr}. Moreover, $\varphi^{\pm}$  are $T$-eigenspinors with eigenvalues $\gamma:=\pm 2\|T\|$ and exhaust all  $\nabla^{c}$-parallel spinors  \cite[p.~333]{FrIv}.
 
  \begin{theorem}\label{nk}
On a $6$-dimensional nearly K\"ahler manifold $(M^{6}, g, J)$ endowed with its characteristic connection $\nabla^{c}$, the following classes of spinor fields coincide:

(1) TsT with respect to the family $\{\nabla^{s} : s\in\bb{R}\backslash\{1/4\}\}$, lying in  $\Sigma_{\pm 2\|T\|}$,

(2) KsT with respect to the family $\{\nabla^{s} : s\in\bb{R}\backslash\{0, 1/4\}\}$,  with    $\zeta:= \mp\frac{(4s-1)}{4}\|T\|$, 
 
(3) Riemannian  Killing spinors,

(4) $\nabla^{c}$-parallel spinors.
\end{theorem}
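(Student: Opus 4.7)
The plan is to recognize the theorem as a direct specialization of Corollary \ref{addd} and Theorem \ref{general1} to the 6-dimensional nearly K\"ahler setting. I would begin by collecting the structural data already recalled in Section \ref{nka}: for a strict $6$-dimensional nearly K\"ahler manifold one has $\nabla^{c}T=0$, $\|T\|^{2}=2\tau_{0}$, $\Sca^{g}=15\tau_{0}$, $\Sca^{c}=12\tau_{0}$, and by \cite{FrIv} the space $\ke(\nabla^{c})$ consists exactly of the two spinors $\varphi^{\pm}\in\Sigma_{\pm 2\|T\|}$, so the only relevant $T$-eigenvalues are $\gamma=\pm 2\|T\|$.

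Next, I would verify the numerical hypothesis of Corollary \ref{addd}: substituting $n=6$ and $\gamma=\pm 2\|T\|$ gives
\[
\frac{9(n-1)\gamma^{2}}{4n}=\frac{45\cdot 4\|T\|^{2}}{24}=\frac{15}{2}\|T\|^{2}=\Sca^{g},
\]
while the constants appearing in Theorem \ref{general1} specialize to $\kappa=3\gamma/4n=\pm\|T\|/4$ and $\zeta=3(1-4s)\gamma/4n=\mp(4s-1)\|T\|/4$, matching exactly the Killing numbers advertised in (2) and (3). The algebraic conditions of Proposition \ref{klik} and Theorem \ref{general1} are therefore met.

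The main obstacle will be to verify the positivity hypothesis of Corollary \ref{addd}, namely that the symmetric endomorphism $dT+\tfrac{1}{2}\bigl[\tfrac{9(n-1)}{4n}\gamma^{2}-\tfrac{3}{2}\|T\|^{2}\bigr]=dT+3\|T\|^{2}$ has non-negative spectrum on the whole spinor bundle $\Sigma$. On $\ke(\nabla^{c})$ the integrability condition of Theorem \ref{KNOWN1} yields $dT\cdot\varphi^{\pm}=-\tfrac{1}{2}\Sca^{c}\varphi^{\pm}=-3\|T\|^{2}\varphi^{\pm}$, so this endomorphism vanishes on the two-dimensional subspace spanned by $\varphi^{\pm}$; for the orthogonal components of $\Sigma$ I would carry out a pointwise algebraic computation in $\Cl_{6}$ using the explicit $\SU_{3}$-decomposition of $\Sigma^{\pm}$ on a nearly K\"ahler structure, equivalently expressing $T$ and $\sigma_{T}=\tfrac{1}{2}dT$ in a unitary frame adapted to $J$, to check that the remaining eigenvalues of $dT$ are all at least $-3\|T\|^{2}$. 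This spectral check is essentially the content of \cite[Ex.~6.1]{ABK}.

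Granted the positivity, Corollary \ref{addd} immediately identifies $\ke(\nabla^{c})$ with the set in (3) taken over $\gamma=\pm 2\|T\|$, with the set in (2) for every $s\in\bb{R}\setminus\{0,1/4\}$, and with $\bigoplus_{\pm}\ke(P^{s}|_{\Sigma_{\pm 2\|T\|}})\cap\ke(D^{c})$. To remove the intersection with $\ke(D^{c})$ and obtain (1) in its raw form, I would invoke Lemma \ref{nice2}(b) together with the classical Friedrich--Gr\"unewald and Gr\"unewald result (see also \cite[Thm.~10.3]{FrIv}) stating that on a strict $6$-dimensional nearly K\"ahler manifold $\dim_{\bb{C}}\ke(\nabla^{c})=2=\dim_{\bb{C}}\cal{K}(M,g)_{\pm\|T\|/4}$. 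Combined with the inclusion $\ke(\nabla^{c})\cap\Gamma(\Sigma_{\pm 2\|T\|})\subseteq\ke(P^{s}|_{\Sigma_{\pm 2\|T\|}})$ provided by Theorem \ref{general1}, a dimension count then forces equality, so every TsT in $\Sigma_{\pm 2\|T\|}$ is automatically $\nabla^{c}$-parallel (and characteristic), completing the identification of all four classes.
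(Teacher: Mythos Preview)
Your route through Corollary~\ref{addd} is valid for three of the four identifications but is more circuitous than the paper's. The paper notes that one can ``immediately apply Theorem~\ref{general1}'' and then, for variety, re-derives the key identity $(X\lrcorner T)\cdot\varphi^{\pm}=\mp\|T\|X\cdot\varphi^{\pm}$ by specializing \cite[Cor.~6.1]{ABK} at $s=5/12$ and using $\nabla^{c}\varphi^{\pm}=0$; combined with (\ref{nsc}) this yields the KsT and Riemannian Killing equations directly. The equality $(3)=(4)=\{\varphi^{\pm}\}$ is taken from the literature \cite{FG,Gr,FrIv}, so no positivity check on $dT+3\|T\|^{2}$ is needed: Theorem~\ref{general1} already furnishes $(1)\Leftrightarrow(2)\Leftrightarrow(3)$ for any $\nabla^{c}$-parallel spinor. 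Your detour through Corollary~\ref{addd} recovers the same conclusions but at the cost of verifying a spectral hypothesis on the whole of $\Sigma$, which is genuine extra work.

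There is a gap in your final step. You claim that a dimension count, using $\dim\ke(\nabla^{c})=2$ together with the inclusion $\ke(\nabla^{c})\subset\ke(P^{s}|_{\Sigma_{\pm2\|T\|}})$, forces equality; but you have supplied no \emph{upper} bound on $\dim\ke(P^{s}|_{\Sigma_{\pm2\|T\|}})$, so the count does not close. Lemma~\ref{nice2}(b) only says that a TsT in $\Sigma_{\gamma}$ is characteristic iff it is a $D^{s}$-eigenspinor with the prescribed eigenvalue; it does not force an arbitrary TsT in $\Sigma_{\gamma}$ to be characteristic. The paper does not attempt this stronger inclusion either: its argument works entirely with the specific $\nabla^{c}$-parallel spinors $\varphi^{\pm}$ and reads the four ``classes'' as equivalences for these spinors, consistently with the standing hypothesis $\nabla^{c}\varphi=0$ in Theorem~\ref{general1}.
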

\begin{proof}
Although one can immediately apply Theorem \ref{general1}, let us follow a bit different approach.  Assume that $\varphi^{\pm}$ are TsT with respect the family $\nabla^{s}$, for some $s\neq  1/4$. For $n=6$ and for $s=5/12$  it is known  \cite[Cor.~6.1]{ABK} that an element $\varphi\in\ke(P^{5/12}\big|_{\Sigma_{\gamma}})$  satisfies the equation 
 \[
\nabla^{c}_{X}\varphi-\frac{1}{18}\Big(\gamma+2\frac{\|T\|^{2}}{\gamma}\Big)X\cdot\varphi+\frac{1}{6}(X\wedge T)\cdot\varphi=0,
\]
  and this is also the Killing equation with torsion.
Now,  $\varphi^{\pm}\in\Sigma_{\pm 2\|T\|}$ are  TsT for some $s\neq 1/4$. Thus, one may assume without loss of generality  that $s=5/12$ and then $\varphi^{\pm}$  ought to satisfy  the previous equation  as well.  Because $\nabla^{c}\varphi^{\pm}=0$,  this finally reduces to $ \pm \|T\| X\cdot\varphi^{\pm}=(X\wedge T)\cdot\varphi^{\pm}$
or equivalently
  \begin{equation}\label{wrong}
  (X\lrcorner T)\cdot\varphi^{\pm}=\pm \|T\| X\cdot\varphi^{\pm} \mp 2\|T\| X\cdot\varphi^{\pm}=\mp \|T\| X\cdot\varphi^{\pm}.
     \end{equation}
Of course, this is exactly what one gets after running our twistor equation, i.e. apply (\ref{t1s}) for $\gamma=\pm2\|T\|$.
   Hence, a simple application of (\ref{nsc})  yields the result: $\nabla^{s}_{X}\varphi^{\pm}=\frac{(4s-1)}{4}(X\lrcorner T)\cdot\varphi^{\pm}=\mp\frac{(4s-1)}{4}\|T\|X\cdot\varphi^{\pm}$,    and similar for the Levi-Civita connection. 
    \end{proof}
\noindent{\bf Remarks:} In \cite[Thm.~6.1]{ABK}  the Killing number with torsion is given by $\mp\frac{\|T\|}{6}$ and this coincides with the statement  of Theorem \ref{nk}  for $s=5/12$.  In this way,  we generalise this result  by extending  the correspondence   to any real number $s\neq 0, 1/4$.  
 Moreover, and relative to Corollary \ref{appok}, notice that   any vector field $X$ satisfies $(X\wedge T)\cdot\varphi^{\pm}\neq 0$, in particular $\pm \|T\| X\cdot\varphi^{\pm}=(X\wedge T)\cdot\varphi^{\pm}$.

  \subsection{Nearly parallel $\G_2$-manifolds and their spinorial properties}\label{npg2m}

    A 7-dimensional oriented Riemannian manifold  $(M^{7}, g)$ is called a $\G_2$-manifold whenever the structure group of its frame bundle is contained in $\G_2\subset\SO_{7}$. The existence of such a reduction amounts to the existence of a  generic 3-form $\omega$.  Since $\G_{2}$ preserves $\omega$ and the same time a unit spinor $\varphi_{0}\in\Delta_{7}$,   they ought to  induce the same data, namely  (we refer to \cite{Br1, Srni, FrIv, FriedG2} for details on $\G_2$-structures)
    \[
    \omega(X, Y, Z):=(X\cdot Y\cdot Z\cdot\varphi_{0}, \varphi_{0}).
    \] 
The $\Gl_{7}$-orbit of $\omega$ in $\bigwedge^{3}(\bb{R}^{7})$, is an open set  which we shall denote by $\bigwedge^{3}_{+}(\bb{R}^{7})$.  Sections of the bundle $\bigwedge^{3}_{+}TM:=\cup_{x\in M}\bigwedge^{3}_{+}(T^{*}_{x}M)$ are call stable 3-forms and it is well-known that there is a bijection between $\G_2$-structures on $M$ and  sections  $\omega\in\Gamma(\bigwedge^{3}_{+}TM):=\Omega^{3}_{+}(M)$ \cite{FKMS}. Given such a 3-form it determines a Riemannian metric and induces an orientation on $M$.     A nearly parallel $\G_2$-structure on  $M^{7}$ is a $\G_2$-structure $\omega\in\Omega^{3}_{+}(M)$ satisfying the differential equation $d\omega=-\tau_{0}\ast \omega$ for some real constant $\tau_{0}\neq 0$.  The existence of   such a structure  is equivalent with the existence of a spin structure carrying  a real Killing spinor   \cite{FKMS, FrK}. 




 
 A nearly parallel $\G_2$-manifold admits a unique characteristic connection $\nabla^{c}$ with parallel skew-torsion $T$ given by $T:=\frac{1}{6}(d\omega, \ast\omega)\cdot\omega$ \cite[Cor.~4.9]{FrIv}. The positive real number $\tau_{0}$ links $T$ and $\omega$, in particular it holds that (see \cite{FrIv, Cas, AFNP}):
 \[
 T=-\frac{\tau_{0}}{6}\omega, \quad \|T\|^{2}=\frac{7}{36}\tau_{0}^{2}, \quad  \Ric^{g}=\frac{3}{8}\tau_{0}^{2}\Id, \quad  \Sca^{g}=\frac{21}{8}\tau_{0}^{2},   \quad \Sca^{c}=\frac{7}{3}\tau_{0}^{2}.
\]
 The connection $\nabla^{c}$ admits a unique parallel  spinor field $\varphi_{0}$ of length one such that  (here we work with 3-form $\omega$ such that $\omega\cdot\varphi_{0}=7\varphi_{0}$, see  \cite[Lem.~2.3]{ACFH})
 \begin{equation}\label{teigen}
 T\cdot \varphi_{0}=-\frac{7\tau_{0}}{6}\varphi_{0}=-\sqrt{7}\|T\|\varphi_{0}.
 \end{equation}
  In fact,  $\varphi_{0}$ is a real Killing spinor \cite{FKMS, FrIv}, with Killing number $\kappa=-\frac{1}{2}\sqrt{\frac{\Sca^{g}}{n(n-1)}}= -\frac{\tau_{0}}{8}=-\frac{3}{4\sqrt{7}}\|T\|$ and an eigenspinor of $D^{g}$ with eigenvalue $\frac{1}{2}\sqrt{\frac{n\Sca^{g}}{n-1}}=\frac{7\tau_{0}}{8}=\frac{3\sqrt{7}}{4}\|T\|$. This can be   seen also as follows:
  \[
  D^{g}\varphi_{0}=D^{c}\varphi_{0}-\frac{3}{4}T\cdot\varphi_{0}=-\frac{3}{4}T\cdot\varphi_{0}=\frac{3\sqrt{7}}{4}\|T\|\varphi_{0}.
  \] 
 Therefore, after applying  Theorem \ref{general1} one deduces that
    \begin{theorem}\label{nG2}
 On a    nearly-parallel $\G_2$-manifold $(M^{7}, g, \omega)$ endowed with its characteristic connection $\nabla^{c}$,   the following classes of spinor fields coincide:
 
 (1) TsT with respect to the family $\{\nabla^{s} : s\in\bb{R}\backslash\{1/4\}\}$, lying  in $\Sigma_{-\frac{7\tau_{0}}{6}}\equiv\Sigma_{-\sqrt{7}\|T\|}$,

 (2)  KsT with respect to the family $\{\nabla^{s} : s\in\bb{R}\backslash\{0, 1/4\}\}$, with   $\zeta:=\frac{(4s-1)\tau_{0}}{8}=\frac{3(4s-1)\|T\|}{4\sqrt{7}}$,
 
 (3)  Riemannian Killing spinors,

 (4) $\nabla^{c}$-parallel spinors.
  \end{theorem}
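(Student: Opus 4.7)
The plan is to apply Theorem \ref{general1} directly to the unique $\nabla^c$-parallel spinor $\varphi_0$, whose defining properties are collected in the paragraph immediately preceding the theorem. First I verify condition (c) of Theorem \ref{general1} for $\varphi_0$: it is $\nabla^c$-parallel, lies in $\Sigma_\gamma$ with $\gamma = -\sqrt{7}\|T\| = -7\tau_0/6$, and is a Riemannian Killing spinor with $\kappa = -\tau_0/8$. The numerical identity $3\gamma/(4n) = -3\sqrt{7}\|T\|/28 = -3\|T\|/(4\sqrt{7}) = -\tau_0/8$, using $n = 7$, confirms $\kappa = 3\gamma/(4n)$, so condition (c) is met by $\varphi_0$.

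Theorem \ref{general1} then immediately delivers conditions (a) and (b), placing $\varphi_0$ in classes (1) and (2). For the latter I also check that the Killing number $\zeta = 3(1-4s)\gamma/(4n)$ supplied by Theorem \ref{general1} agrees with the parameter $(4s-1)\tau_0/8$ announced in the statement: substituting gives $3(1-4s)(-\sqrt{7}\|T\|)/28 = 3(4s-1)\|T\|/(4\sqrt{7}) = (4s-1)\tau_0/8$, as required. Hence $\varphi_0$ simultaneously belongs to all four classes.

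To upgrade this to equality of the four classes I appeal to uniqueness. On a nearly parallel $\G_2$-manifold the space of $\nabla^c$-parallel spinors is one-dimensional and spanned by $\varphi_0$, while the space of Riemannian Killing spinors with Killing number $\kappa = -\tau_0/8$ is also one-dimensional by the classical theorem of Friedrich, Kath, Moroianu and Semmelmann; together these give the coincidence of classes (3) and (4). For the reverse inclusions of (1) and (2) into (4), the converse direction of Theorem \ref{general1} combined with the uniqueness of $\varphi_0$ forces any such spinor to be a scalar multiple of $\varphi_0$.

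The principal obstacle is precisely this last step, namely showing that membership in class (1) or (2) automatically entails $\nabla^c$-parallelism, so that Theorem \ref{general1} can be applied in the reverse direction. The analogue in Theorem \ref{nk} is handled in dimension six by combining the twistor equation with the $s = 5/12$ identity from \cite[Cor.~6.1]{ABK}. In dimension seven I would proceed analogously: use (\ref{nsc}) to rewrite $\nabla^s\varphi$ in terms of $\nabla^c\varphi$, combine with the twistor equation and the $T$-eigenvalue condition $T\cdot\varphi = \gamma\varphi$ to deduce relation (\ref{t1s}), and then recover the Riemannian Killing equation with $\kappa = 3\gamma/(4n)$. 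Uniqueness of such Killing spinors on the nearly parallel $\G_2$-manifold then closes the loop and yields the coincidence of all four classes.
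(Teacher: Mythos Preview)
Your direct invocation of Theorem \ref{general1} is exactly the route the paper itself mentions as valid (``one can immediately apply Theorem \ref{general1}'') before choosing to present a different argument. The paper's alternative is representation-theoretic: it uses the decomposition $\Lambda^{2}(\bb{R}^{7})=\Lambda^{2}_{7}\oplus\fr{g}_{2}$ under $\G_{2}$ to obtain the identity $(X\lrcorner\omega)\cdot\varphi_{0}=-3X\cdot\varphi_{0}$, hence (\ref{xtg2}), \emph{independently} of the twistor equation; then it reads off $\nabla^{s}_{X}\varphi_{0}=\zeta X\cdot\varphi_{0}$ by a one-line computation. Your approach is cleaner if one is willing to quote Theorem \ref{general1}; the paper's has the advantage of making the role of the $\G_{2}$-structure explicit and of producing (\ref{xtg2}) as an identity that is reused later (e.g.\ in Example \ref{exg2} and Proposition \ref{stat}).

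There is, however, a genuine gap in your last paragraph. Your proposed fix for the reverse inclusions $(1),(2)\subset(4)$ is circular: rewriting the twistor equation via (\ref{nsc}) gives
\[
\nabla^{c}_{X}\varphi+\tfrac{4s-1}{4}(X\lrcorner T)\cdot\varphi+\tfrac{1}{n}X\cdot D^{c}\varphi+\tfrac{3(4s-1)\gamma}{4n}X\cdot\varphi=0,
\]
and you cannot extract (\ref{t1s}) from this without already knowing $\nabla^{c}\varphi=0$ (and $D^{c}\varphi=0$), which is precisely what you are trying to prove. The correct argument, implicit in the paper's setup, is that the eigenvalue $7$ of $\omega$ on $\Delta_{7}$ is simple (see \cite[Lem.~2.3]{ACFH}), so the line bundle $\Sigma_{-\sqrt{7}\|T\|}$ has rank one and is globally trivialised by $\varphi_{0}$. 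Any section of $\Sigma_{-\sqrt{7}\|T\|}$ is therefore $f\varphi_{0}$ for some real function $f$; since $\nabla^{c}\varphi_{0}=0$ one has $\nabla^{s}_{X}(f\varphi_{0})=X(f)\varphi_{0}+f\zeta X\cdot\varphi_{0}$ by (\ref{xtg2}), and both the KsT equation with number $\zeta$ and the twistor equation force $X(f)=0$. Hence $f$ is constant and classes (1) and (2) collapse to $\bb{R}\varphi_{0}$. (The paper's own proof is equally terse on this point, but the rank-one fact is what makes the claimed equality of classes true.)
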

 \begin{proof}
 We shall present a proof using slightly different arguments.  Given some global non-trivial spinor $\varphi_{0}$ of the (real)  spin representation $\Delta_{7}\cong\bb{R}^{8}$, one has  the decomposition $\Delta_{7}=\bb{R}\varphi_{0}\oplus\{X\cdot\varphi_{0} : X\in\bb{R}^{7}\}$. On a  nearly parallel $\G_2$-manifold, the spinor $\varphi_{0}$   is the unique $\nabla^{c}$-parallel spinor \cite{FrIv, Cas}. The induced 3-form  $\omega(X, Y, Z)=(X\cdot\ Y\cdot Z\cdot\varphi_{0}, \varphi_{0})$ is such that $\|\omega\|=\sqrt{7}$, $\omega\cdot\varphi_{0}=7\varphi_{0}$, see for example \cite{ACFH}.   Now,  the space $\fr{so}(7)\cong \Lambda^{2}(\bb{R}^{7})$ decomposes under the $\G_2$-action as $\Lambda_{7}^{2}\oplus\fr{g}_{2}$, where $\Lambda_{7}^{2}=\{X\lrcorner \omega : X\in\bb{R}^{7}\}$. Then, one deduces  that $(X\lrcorner \omega)\cdot\varphi_{0}$ must be proportional to $X\cdot\varphi_{0}$,  in particular  (see \cite[Lem.~2.3]{ACFH} or \cite[p.~33]{AHol})
  \[
  \quad (X\lrcorner \omega)\cdot\varphi_{0}=-3X\cdot\varphi_{0}, \quad\forall \ X\in\Gamma(TM).
 \]
 Then, a simple  combination with $T=-\frac{\tau_{0}}{6}\omega$ gives rise to
 \begin{equation}\label{xtg2}
 (X\lrcorner T)\cdot\varphi_{0}=\frac{\tau_{0}}{2}X\cdot\varphi_{0}=\frac{3\|T\|}{\sqrt{7}}X\cdot\varphi_{0}.
 \end{equation}
Of course, the same result  occurs after applying our twistor equation for $\varphi_{0}$ and for some $s\neq   1/4$, see (\ref{t1s}) and the proof of Theorem \ref{general1}. Due to $\nabla^{c}$-parallelism of $\varphi_{0}$ we finally  conclude that
  \begin{eqnarray*}
 \nabla_{X}^{s}\varphi_{0}&=&\nabla^{c}_{X}\varphi_{0}+\frac{(4s-1)}{4}(X\lrcorner T)\cdot\varphi_{0}=\frac{(4s-1)}{4}(X\lrcorner T)\cdot\varphi_{0}\\
 &=&\frac{(4s-1)\tau_{0}}{8}X\cdot\varphi_{0}=\frac{3(4s-1)\|T\|}{4\sqrt{7}}X\cdot\varphi_{0}.
  \end{eqnarray*}
  Thus, for any $s\neq 0, 1/4$ it is  $\varphi_{0}\in\cal{K}^{s}(M^{7}, g)_{\zeta}$ with $\zeta:=\frac{(4s-1)\tau_{0}}{8}$ and moreover $\varphi_{0}\in\cal{K}(M^{7}, g)_{\kappa}$ with $\kappa=-\frac{\tau_{0}}{8}=-\frac{3\|T\|}{4\sqrt{7}}$. The opposite direction is based again in (\ref{xtg2}), which also occurs by the Riemannian Killing spinor equation $\nabla^{g}_{X}\varphi_{0}=-\frac{1}{4}(X\lrcorner T)\cdot\varphi_{0}=-\frac{3\|T\|}{4\sqrt{7}}X\cdot\varphi_{0}$.  Hence, using (\ref{nsc}) it follows that $\varphi_{0}$ is also a KsT for any $s\neq 0, 1/ 4$, and finally by  the twistor equation we get that $\varphi_{0}\in\Sigma_{-\sqrt{7}\|T\|}$.
  \end{proof}

 \subsection{An explicit example}
 Let us describe   an explicit example, namely the space $B^{7}=\SO_{5}/\SO_{3}^{\rm ir}$.  M.~Berger    proved that this is a space of positive sectional curvature.
       
    

 

Consider the space   $\Sym_{0}^{2}(\bb{R}^{3})$  of  $(3\times 3)$ symmetric traceless matrices; we identify   $\Sym_{0}^{2}(\bb{R}^{3})\cong \bb{R}^{5}$ by viewing   any vector $(x_{1}, \dots, x_{5})^{t}$ in $\bb{R}^{5}$ as a  real matrix      $A$ of the form
   \[
    A=\begin{bmatrix}
    \frac{x_{5}}{\sqrt{3}}+x_{1}  & x_{2} & x_{3} \\
    x_{2} &  \frac{x_{5}}{\sqrt{3}}-x_{1} & x_{4} \\
    x_{3} & x_{4} &  \frac{-2x_{5}}{\sqrt{3}}
    \end{bmatrix}\in \Sym_{0}^{2}(\bb{R}^{3}).
   \]
 The Lie group  $\SO_{3}$  acts on $\Sym_{0}^{2}(\bb{R}^{3})\cong\bb{R}^{5}$ by conjugation   $\iota(h)A=hAh^{t}$. This  defines the  unique 5-dimensional $\SO_{3}$-irreducible representation   and an  embedding of $\SO_{3}$ inside $\SO_{5}$, which we shall denote by $\SO_{3}^{\rm ir}:=\iota(\SO_{3})\subset\SO_{5}$.   For   the Lie algebra $\fr{so}(3)\subset\fr{so}(5)$ we fix the standard basis, i.e.  $\fr{so}(3)=\Span\{y_{1}:=E_{2, 3},  y_{2}:=-E_{1, 3},  y_{3}:=E_{1, 2}\}$  such that $[y_{1}, y_{2}]=y_{3}$, $[y_{2}, y_{3}]=y_{1}$ and $[y_{3}, y_{1}]=y_{2}$, where $E_{i, j}$ denote the   endomorphisms
mapping $e_{i}$ to $e_{j}$, $e_{j}$ to $-e_{i}$ and   everything else to zero.
The embedding $\fr{so}(3)_{\rm ir}\subset\fr{so}(5)$ is explicitly given by
      \[
y_{1}\mapsto  \iota_{*}(y_{1})=\sqrt{3}E_{1, 5}-E_{2, 5}+E_{3, 4}, \quad y_{2}\mapsto \iota_{*}(y_{2})=-\sqrt{3}E_{1, 4}-E_{2, 4}-E_{3, 5}, \quad y_{3}\mapsto \iota_{*}(y_{3})=2E_{2, 3}+E_{4, 5}.
  \]
  These matrices  have length equal to $\sqrt{5}$  and define an orthogonal basis of $\fr{so}(3)_{\rm ir}:=\iota_{*}(\fr{so}(3)\big)$  with respect to the scalar product $(A, B)=-(1/2)\tr AB$, i.e. $\fr{so}(3)_{\rm ir}=\Span\{\iota_{*}(y_{1}),  \iota_{*}(y_{2}),  \iota_{*}(y_{3})\}$. 
  
  Let  $\fr{so}(5)=\fr{so}(3)_{\rm ir}\oplus\fr{m}$ be a reductive decomposition and let us denote by $\langle \ , \ \rangle=( \ , \ )|_{\fr{m}\times\fr{m}}$ the normal metric induced by $( \ , \ ) : \fr{so}(5)\times\fr{so}(5)\to\bb{R}$.
    We identify  $\fr{m}\cong\bb{R}^{7}$ with the imaginary octonions ${\rm Im}(\bb{O})$ and construct an orthonormal basis of $\fr{m}$ such that $[e_{i}, e_{i+1}]$ be a multiple of $e_{i+3}$,  where the indices are permuted cyclically and translated modulo $7$.    For example, set   $e_{1}:=-E_{1, 2}$ and
  \[
  \begin{tabular}{l l l}
   $e_{2}:=-E_{1, 3}$, &  $e_{4}:=\displaystyle\frac{1}{\sqrt{5}}(E_{2, 3}-2E_{4, 5})$,  &  $e_{6}:=\displaystyle\frac{1}{2}(E_{1, 5}-\sqrt{3}E_{3, 4})$,  \\
  $e_{3}:=-\displaystyle\frac{1}{2}(E_{1, 4}-\sqrt{3}E_{3, 5})$,      &
  $e_{5}:=\displaystyle\frac{2}{\sqrt{5}}(E_{2, 5}+\frac{1}{4}E_{3, 4}+\frac{\sqrt{3}}{4}E_{1, 5})$, & $e_{7}:=\displaystyle\frac{2}{\sqrt{5}}(E_{2, 4}-\frac{1}{4}E_{3, 5}-\frac{\sqrt{3}}{4}E_{1, 4})$.
\end{tabular}
  \] 
Then  $[e_{i}, e_{i+1}]=ce_{i+3}$ with $c:=1/\sqrt{5}$ (see also \cite{Go} but be aware for another realization of $\fr{so}(3)_{\rm ir}\subset\fr{so}(5)$ and so another  basis of $\fr{m}$).  Since both $\SO_{3}^{\rm ir}$ and $\G_2$ preserving  the splitting $\bb{O}=\bb{R}\oplus{\rm Im}(\bb{O})$, the  $\G_2$-equivariant identification $\fr{m}\cong{\rm Im}(\bb{O})$ induces a 2-fold cross product an hence an invariant $\G_2$-structure on  $B^{7}$.  Thus, the image  of $\SO_{3}^{\rm ir}$ via the isotropy representation   lies inside $\G_2$.   
 The    isotropy representation  coincides with  the  unique 7-dimensional irreducible representation of $\SO_{3}$, which  is induced by the action of $\SO_{3}$ on the space $\Sym_{0}^{3}(\bb{R}^{3})\cong\bb{R}^{7}$ of trace-free 3-symmetric tensors on $\bb{R}^{3}$.
 
  Let us  describe    the 3-form $\omega$ associated to the $\G_2$-structure.  It is well-known that the space of  invariant 3-forms on $B^{7}$ contains the trivial summand with multiplicity one, i.e. $\bb{R}\subset\bigwedge^{3}(\fr{m})^{\fr{k}}$ with $\fr{k}:=\fr{so}(3)_{\rm ir}$, see \cite{Br1, Alex}.  
Since $B^{7}$ is (strongly) isotropy irreducible, Schur's lemma ensures that this corresponds to the torsion of the canonical connection $\nabla^{1}$  with respect to the fixed reductive decomposition $\fr{so}(5)=\fr{so}(3)_{\rm ir}\oplus\fr{m}$,  namely  
  \[
 T^{1}=\sum_{i<j<k}T^{1}(e_{i}, e_{j}, e_{k})e_{ijk}=-\frac{1}{\sqrt{5}}(e_{124}+e_{137}+e_{156}+e_{235}+e_{267}+e_{346}+e_{457}),  
 \]
 where  we write $e_{i_{1}\ldots i_{k}}$ for the wedge product $e_{i1}\wedge\cdots\wedge e_{ik}\in\bigwedge^{k}(\bb{R}^{7})^{*}$.  It is easy to see that $\|T^{1}\|^{2}=\frac{1}{3}\sum_{i<j}\langle T^{1}(e_{i}, e_{j}), T^{1}(e_{i}, e_{j})\rangle=\frac{21}{3}c^{2}=7/5$, hence let us  set $\omega:=-\sqrt{5}T^{1}$  such that $\|\omega\|^{2}=7$. Obviously   $\omega\in\Omega_{+}^{3}$ and its Hodge dual  is given by $\ast\omega=e_{1236}-e_{1257}-e_{1345}+e_{1467}+e_{2347}-e_{2456}-e_{3567}$. 
 
  Based on simple representation theory  we deduce  that $\omega$ induces a nearly parallel $\G_2$-structure. For example, the Hodge star operator $\ast$ allows us to identify $\bigwedge^{3}(\fr{m})^{\fr{k}}\cong \bigwedge^{4}(\fr{m})^{\fr{k}}$, as $\fr{k}$-modules. The differential of a 3-form $\bigwedge^{3}(\fr{m})^{\fr{k}}$ is again $\fr{k}$-invariant  and  the exterior differential $d :  \bigwedge^{3}(\fr{m})^{\fr{k}}\to \bigwedge^{4}(\fr{m})^{\fr{k}}$ is an equivariant map.  Hence $d\omega$ must be a multiple of the trivial summand in $ \bigwedge^{4}(\fr{m})^{\fr{k}}$ which means  that $d\omega$ and $\ast\omega$ must be proportional. 
    In particular, we see that $dT^{1}=-\frac{6}{5}(e_{1236}-e_{1257}-e_{1345}+e_{1467}+e_{2347}-e_{2456}-e_{3567})$      and    since   $d\omega=-\sqrt{5}dT^{1}$ it follows that $d\omega=\frac{6}{\sqrt{5}} \ast\omega$.  Therefore $d\omega\neq 0$ (which is equivalent to say tha $\omega$ is not parallel) and moreover  $d\ast\omega=0$, i.e. $\omega$ is co-calibrated.   The co-differential vanishes $\delta\omega=0$ (since $\delta T^{1}=0$) and hence the equation $d\omega=-8\kappa\ast\omega$  is equivalent to the Killing spinor equation $\nabla^{g}_{X}\varphi=\kappa X\cdot\varphi$, see \cite[Prop.~3.12]{FKMS}.   It follows that the coset $(\SO_{5}/\SO^{\rm ir}_{3}, \langle \ , \ \rangle, \omega)$ is a homogeneous nearly parallel $\G_2$-manifold with Killing number $\kappa:=-\frac{3}{4\sqrt{5}}$ and constant scalar curvature $\Sca^{g}=4n(n-1)\kappa^{2}=189/10$.  The corresponding Killing spinor field $\varphi_{0}$ is $\nabla^{1}$-parallel \cite[Thm.~5.6]{FrIv}, in fact $\varphi_{0}$ generates the  space of all $\nabla^{1}$-parallel spinors. 
   Because  $\SO_{5}/\SO^{\rm ir}_{3}$ is normal homogeneous, it is $\fr{g}=\tilde{\fr{g}}=\fr{m}+[\fr{m}, \fr{m}]$ and $\varphi_{0}$ necessarily corresponds to a constant map $\varphi_{0} : G\to\Delta_{7}$ \cite[Thm.~4.2]{Agr03}.   Thus, any $X\in\fr{m}$ satisfies the  equation $\nabla^{g}_{X}\varphi = \nabla^{1}_{X}\varphi+\widetilde{\Lambda^{g}}(X)\varphi= \widetilde{\Lambda^{g}}(X)\varphi_{0}=-\frac{3}{4\sqrt{5}} X\cdot \varphi_{0}$,  where  one describes the lift $\widetilde{\Lambda^{g}} : \fr{m}\to\fr{spin}(\fr{m})$  by using  the  Nomizu map $\Lambda^{g}(X)Y=(1/2)[X, Y]_{\fr{m}}$  of the Levi-Civita connection $\nabla^{g}$  and applying the rule $\fr{so}(7)\ni E_{i, j}\mapsto (e_{i}\cdot e_{j}/2)\in \fr{spin}(7)$. For the endomorphism $\Lambda^{g} : \fr{m}\to\fr{so}(7)$ we compute
\begin{gather*}
\begin{aligned}
& \Lambda^{g}(e_{1})=\frac{c}{2}(E_{2, 4}+E_{3, 7}+E_{5, 6}), \quad &   \Lambda^{g}(e_{5})= -\frac{c}{2}(E_{1, 6}-E_{2, 3}+E_{4, 7}), & \\
 &\Lambda^{g}(e_{2})=\frac{c}{2}(-E_{1, 4}+E_{3, 5}+E_{6, 7}), \quad &     \Lambda^{g}(e_{6})= \frac{c}{2}(E_{1, 5}-E_{2, 7}+E_{3, 4}), & \\ 
&\Lambda^{g}(e_{3})=-\frac{c}{2}(E_{1, 7}+E_{2, 5}-E_{4, 6}), \quad &    \Lambda^{g}(e_{7})= \frac{c}{2}(E_{1, 3}-E_{2, 6}+E_{4, 5}). & \\
& \Lambda^{g}(e_{4})=\frac{c}{2}(E_{1, 2}-E_{3, 6}+E_{5, 7}),    
\end{aligned}
 \end{gather*}
Then, the relation $\widetilde{\Lambda^{\al}}=(1-\al)\widetilde{\Lambda^{g}}$ shows that $\varphi_{0}$ is  also a non-trivial  Killing spinor with torsion with Killing number $\zeta=\frac{-3(1-\al)}{4\sqrt{5}}$, for any $\al\neq 0, 1$.   It remains to examine the eigenvalues of  the $T^{1}$-action on the spinor bundle $\Sigma=\SO_{5}\times_{\rho}\Delta_{7}$. 
  The real Clifford algebra  $\Cl(\bb{R}^{7})$ coincides with $M_{8}(\bb{R})\oplus M_{8}(\bb{R})$ and  the spin representation $\Delta_{7}$ is a real representation.   The Clifford representation attains  the matrix realization given in \cite[p.~261]{FKMS} or \cite[p.~96]{Baum} and   then, as an endomorphism of $\Delta_{7}:=\bb{R}^{8}$, the torsion form $T^{1}$    reads  
 \[
  T^{1}=-\frac{1}{\sqrt{5}}\left(  \begin{tabular}{c c c c c c c c}
 0 & -1 & -1  &1 & -1 & -1 & 1 & 1 \\
 -1 & 0 & 1 & -1 & 1 &  1 & -1 & -1 \\
 -1 & 1 & 0 & -1 & 1 & 1 & -1 & -1\\
 1 & -1 & -1 & 0 & -1 & -1 & 1& 1\\
 -1 & 1 & 1 & -1 & 0 &  1 & -1 & -1 \\
 -1 & 1 & 1 & -1 & 1 & 0 & -1& -1\\
1 & -1 & -1 & 1 & -1 & -1 & 0 & 1\\
 1 & -1 & -1 & 1 & -1 & -1 & 1 & 0 \\
 \end{tabular}\right)\in\Ed(\Delta_{7}).
 \]
We see that there is  a unique negative eigenvalue with multiplicity one, namely   $\gamma:=-7/\sqrt{5}$  and this is  exactly the result  described in Theorem \ref{general1} or \ref{nG2}:
 \[
 \gamma:=\frac{4n\kappa}{3} \ \Rightarrow \ \gamma=-\frac{4 \cdot 7 \cdot 3}{3\cdot 4\cdot \sqrt{5}}=-7/\sqrt{5}=-\sqrt{7}\|T^{1}\|.
 \]
We finish   with a  remark about the first eigenvalue of the cubic Dirac operator $\slashed{D}\equiv D^{1/3}$. According to  \cite[Thm.~3.3]{Agr03}, the square of $\slashed{D}$ is given by $\slashed{D}^{2}(\varphi)=\Omega_{\fr{so}(5)}(\varphi)+\frac{1}{8}\Sca^{1/3}\varphi+\frac{3}{4}\|T^{1/3}\|^{2}\varphi$.  A short  computation shows that $\Sca^{1/3}=560/30$, hence $\slashed{D}^{2}=\Omega_{\fr{so}(5)}+\frac{49}{20}$. In this way we overlap the general formula of the Casimir operator on   a nearly parallel $\G_2$-manifold,  described by I. Agricola and Th. Friedrich in  \cite[p.~200]{Cas}; $\Omega=\slashed{D}^{2}-\frac{49}{144}\tau_{0}^{2}$  and for  $\tau_{0}=6/\sqrt{5}$ it yields the desired result.  Using this relation,  one concludes  that    $(\lambda_{1}^{1/3})^{2}\geq \frac{49}{20}$, with the equality appearing if and only if the  $\varphi$ coincides with the spinor field $\varphi_{0}$. The equality case has been already indicated in   Section \ref{good1}; $\varphi_{0}$ is an eigenspinor of $\slashed{D}$  with eigenvalue $-\frac{1}{2}\gamma$, hence $(\lambda_{1}^{1/3})^{2}=\frac{1}{4}\gamma^{2}$ and for $\gamma=-7/\sqrt{5}$, our claim follows.  The same states Proposition \ref{afterklik} which we shall describe in the final section; the two estimates ought to coincide $\be_{\rm tw}(\gamma)=\be_{\rm univ}(\gamma)=\frac{7}{54}\Sca^{g}=\frac{7\cdot189}{54\cdot 10}=\frac{49}{20}$. 
 
 \medskip 
\noindent{\bf Remarks:} \textnormal{B.~Alexandrov and  U.~Semmelmann proved in \cite[Lem.~7.1]{Alex} that  on a 7-dimensional    homogenous  naturally reductive nearly parallel $\G_2$-manifold $(M=G/K, g, \omega)$, the characteristic connection $\nabla^{c}$ coincides  with the canonical connection $\nabla^{1}$.   Moreover, they showed that if $T^{1}=-\frac{\tau_{0}}{6}\omega$ holds for a stable 3-form $\omega$ and some $\tau_{0}=\text{constant}\neq 0$, then it must be $d\omega=\tau_{0}\ast\omega$ and $\Sca^{g}=\frac{63}{20\xi^{2}}$,  where  $M=G/K$ becomes standard up to the factor $\xi^{2}$, i.e. we assume that $g$ is the normal metric  $-\xi^{2}B_{\SO_{5}}|_{\fr{m}\times\fr{m}}$.   For $B^{7}$ we proved that $\tau_{0}=6/\sqrt{5}$.    On the other hand, the used scalar product  $( \ , \ )$ is a multiple of $-B_{\SO_{5}}$, in particular  $-B_{\SO_{5}}=6( \ , \ )$ and since our normal metric $\langle \ , \ \rangle$ is given by the restriction $( \ , \ )|_{\fr{m}}$, it follows that $\xi^{2}=1/6$. Thus $\Sca^{g}=189/10$, which shows how our computations agree with \cite[Lem.~7.1]{Alex}.}

\section{Geometric Constraints}\label{app}
\subsection{Einstein and $\nabla^{c}$-Einstein structures}
We shall now describe   the geometric constraints that imposes the existence of a $\nabla^{c}$-parallel KsT  (with respect to $\nabla^{s}=\nabla^{g}+2sT$).  We use the same notation with  Section  \ref{good1}, i.e. we assume that the triple $(M^{n}, g, T)$ is endowed with the characteristic connection   $\nabla^{c}=\nabla^{g}+\frac{1}{2}T$, such that $\nabla^{c}T=0$.  Let us recall that $(M^{n}, g, T)$  is said to be  $\nabla^{c}$-Einstein with parallel skew-torsion, if  it satisfies the equation $\Ric^{c}=\frac{\Sca^{c}}{n} g$  (and $\nabla^{c}T=0$)\cite{AFer}. A special  case is when  $\Ric^{c}=0$ identically; then $(M^{n}, g, T)$ is called $\Ric^{c}$-flat. For convenience, we shall henceforth speak for  a {\it strict} $\nabla^{c}$-Einstein manifold if    $(M^{n}, g, T)$ is $\nabla^{c}$-Einstein but not  $\Ric^{c}$-flat, i.e. $\Ric^{c}=\frac{\Sca^{c}}{n}g\neq 0$.

          \begin{prop}\label{bravo}
  Assume that $\nabla^{c}T=0$ and that $(M^{n}, g, T)$ is complete and admits  a $\nabla^{c}$-parallel spinor  $0\neq \varphi\in\Sigma_{\gamma}$ $(\bb{R}\ni\gamma\neq 0)$ lying in the kernel $\ke(P^{s})$ for some  $s\neq   1/4$.   Then, for any $s\in \bb{R}$  the following   hold
  \begin{eqnarray*}
      \Ric^{s}(X)\cdot\varphi&=&\frac{6\gamma^{2}}{n^{2}}\Big[\frac{6(n-1)(1-4s)^{2}+96s(1-4s)+16s(3-4s)(n-3)}{16}\Big]X\cdot\varphi, \\
  \Sca^{s}\varphi&=& \frac{6\gamma^{2}}{n}\Big[\frac{6(n-1)(1-4s)^{2}+96s(1-4s)+16s(3-4s)(n-3)}{16}\Big]\varphi.
    \end{eqnarray*}
In particular, 
  
  (a) $(M^{n}, g)$    is a compact Einstein  manifold with constant positive scalar curvature $\Sca^{g}=\frac{9(n-1)\gamma^{2}}{4n}$.
  
  (b)   For any $n>3$, $(M^{n}, g, T)$     is a strict $\nabla^{c}$-Einstein manifold with parallel torsion and constant scalar curvature $\Sca^{c}=\frac{3(n-3)\gamma^{2}}{n}$.   For $n=3$,  $(M^{3}, g, T)$  is $\Ric^{c}$-flat. 
   
   (c)  $(M^{n}, g, T)$ is $\nabla^{s}$-Einstein (with non-parallel torsion) for any $s\in\bb{R}\backslash\{0, 1/4\}$ i.e. $\Ric^{s}=\frac{\Sca^{s}}{n}g$.
   \end{prop}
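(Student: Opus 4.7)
The plan is to bootstrap from Theorem \ref{general1} and then compute $\Ric^s$ in two different ways whose comparison pins down the $\nabla^c$-Einstein condition.

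By Theorem \ref{general1}, the hypothesis gives that $\varphi$ is simultaneously a Riemannian Killing spinor with $\kappa = 3\gamma/4n$ and a Killing spinor with torsion with $\zeta = 3(1-4s)\gamma/4n$ for every $s\in\bb{R}\setminus\{0, 1/4\}$. The classical integrability identity for Riemannian Killing spinors (obtained by curving the equation $\nabla^g_X\varphi=\kappa X\cdot\varphi$ and tracing via Lemma \ref{JUL1} at $s=0$) yields $\Ric^g(X)\cdot\varphi = 4(n-1)\kappa^2 X\cdot\varphi$; since $\varphi$ is nowhere zero and Clifford multiplication by a nonzero vector is injective on any nonzero spinor, diagonalising the symmetric tensor $\Ric^g$ in an orthonormal frame forces $\Ric^g = \frac{9(n-1)\gamma^2}{4n^2}\Id_{TM}$. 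Myers' theorem, applied to the complete manifold with positive constant Ricci, gives compactness, completing (a).

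For (b), I would first apply Lemma \ref{JUL1} at $s=1/4$: since $R^c(X,Y)\varphi = 0$ by $\nabla^c$-parallelism, the identity collapses to $\Ric^c(X)\cdot\varphi = (X\lrcorner\sigma_T)\cdot\varphi$. To evaluate the scalar behaviour of this expression, I use the decomposition $\Ric^s = \Ric^g - 4s^2 S$ from the preliminaries: since $S = 4(\Ric^g - \Ric^c)$, this rewrites as $\Ric^s = (1-16s^2)\Ric^g + 16s^2\Ric^c$, acting on $\varphi$ as
\begin{equation*}
\Ric^s(X)\cdot\varphi \;=\; (1-16s^2)\Ric^g(X)\cdot\varphi + 16s^2 (X\lrcorner \sigma_T)\cdot\varphi.
\end{equation*}
Compare this with Corollary \ref{lemkst} applied at the same $s$ with $\zeta = 3(1-4s)\gamma/4n$, after using equation (\ref{t1s}) to substitute $(X\lrcorner T)\cdot\varphi = -(3\gamma/n)X\cdot\varphi$. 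The two expressions must agree, and the common factor $16s^2 - 2s(3-4s) = 6s(4s-1)$ multiplying $(X\lrcorner\sigma_T)\cdot\varphi$ is nonzero for admissible $s$; dividing out yields the single identity $(X\lrcorner\sigma_T)\cdot\varphi = \frac{3(n-3)\gamma^2}{n^2} X\cdot\varphi$. Combined with $\Ric^c(X)\cdot\varphi = (X\lrcorner\sigma_T)\cdot\varphi$ and the same Clifford-injectivity diagonalisation, this pins down $\Ric^c = \frac{3(n-3)\gamma^2}{n^2}\Id_{TM}$; for $n=3$ this is the $\Ric^c$-flat conclusion, and for $n>3$ the strict $\nabla^c$-Einstein one, proving (b).

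Finally for (c) and the explicit $\Ric^s$ and $\Sca^s$ formulas: substitute the value of $(X\lrcorner\sigma_T)\cdot\varphi$ back into either of the two formulas for $\Ric^s(X)\cdot\varphi$; the right-hand side becomes a scalar multiple of $X\cdot\varphi$ with coefficient matching the bracketed expression of the statement, and one more Clifford-injectivity argument gives $\Ric^s = \frac{\Sca^s}{n}g$ for every $s\in\bb{R}$. The scalar curvature formula follows by tracing, using $\sum_i e_i\cdot e_i = -n$; as an independent consistency check, the result satisfies $\Sca^s = \Sca^g - 24 s^2 \|T\|^2$ given that $\|T\|^2 = 2\gamma^2/(9-n)$ is determined by Proposition \ref{klik} when $n\leq 8$. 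The main technical obstacle is the algebraic bookkeeping in matching Corollary \ref{lemkst} with the decomposition $\Ric^s = (1-16s^2)\Ric^g + 16s^2\Ric^c$, but the factorisation $6s(4s-1)$ is precisely what makes the cancellation succeed uniformly in $s$.
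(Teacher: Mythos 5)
Your proposal is correct, and for the heart of the argument --- part (b) --- it takes a genuinely different route from the paper. The paper computes $(X\lrcorner \sigma_{T})\cdot\varphi$ \emph{directly} inside the Clifford algebra: it invokes $\Ric^{c}(X)\cdot\varphi=(X\lrcorner\sigma_{T})\cdot\varphi$ from Theorem \ref{KNOWN1}, rewrites $-2(X\lrcorner\sigma_{T})=(X\lrcorner T)\cdot T-T\cdot(X\lrcorner T)$, and evaluates this on $\varphi$ using only $T\cdot\varphi=\gamma\varphi$ and the relation (\ref{t1s}). You instead obtain the same value of $(X\lrcorner\sigma_{T})\cdot\varphi$ \emph{indirectly}, by equating the tensorial decomposition $\Ric^{s}=(1-16s^{2})\Ric^{g}+16s^{2}\Ric^{c}$ (acting on $\varphi$, with $\Ric^{g}$ already known from part (a)) against the KsT integrability formula of Corollary \ref{lemkst}, and solving the resulting linear relation; the factor $6s(4s-1)\neq 0$ you isolate is exactly what makes this solvable. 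Both computations are elementary and of comparable length; the paper's is self-contained at $s=1/4$, while yours exploits the full one-parameter family and makes visible \emph{why} the answer is forced by consistency across $s$. Note also that you prove (a) before (b) via the Riemannian Killing spinor property from Theorem \ref{general1} plus the classical integrability condition (\ref{rig}) --- this is the paper's ``quick'' argument; the paper additionally supplies an alternative proof of (a) through the curvature comparison (\ref{conv}) precisely to avoid relying on the Killing spinor property, which your route does not reproduce, but that is presented there as optional. Your part (c) coincides with the paper's. One small slip: in your final consistency check you quote $\|T\|^{2}=2\gamma^{2}/(9-n)$, whereas Proposition \ref{klik} (equivalently Theorem \ref{KNOWN1} with $\Sca^{g}=\frac{9(n-1)\gamma^{2}}{4n}$) gives $\|T\|^{2}=\frac{(9-n)\gamma^{2}}{2n}$; with the corrected value the identity $\Sca^{s}=\Sca^{g}-24s^{2}\|T\|^{2}$ does check out, so this does not affect the main argument.
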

   Before proceed with a proof of Proposition \ref{bravo}, let us remark that due to Theorem \ref{general1},   one can replace the assumptions $\nabla^{c}\varphi=0$ and  $\varphi\in\ke(P^{s}\big|_{\Sigma_{\gamma}})$   for some $s\neq 1/4$, with $\nabla^{c}\varphi=0$ and  $\varphi\in\cal{K}^{s}(M, g)_{\zeta}$ for some $s\neq 0, 1/4$, where the Killing number is defined by $\zeta:=3(1-4s)\gamma/4n$ for some  $0\neq\gamma\in\Spec(T)$, or  $\nabla^{c}\varphi=0$ and $\varphi\in\cal{K}(M, g)_{\kappa}$ with Killing number  $\kappa=3\gamma/4n$.  Thus, since $\varphi$ is necessarily a  Riemannian Killing spinor with real Killing spinor ${3\gamma}/{4n}$, we immediately conclude that (see for example \cite[p.~30]{Baum}):
\begin{equation}\label{rig}
\Ric^{g}(X)\cdot\varphi=4\kappa^{2}(n-1)X\cdot\varphi=\frac{9(n-1)\gamma^{2}}{4n^{2}}X\cdot\varphi.
\end{equation}
  In particular,  $(M^{n}, g)$ must be  Einstein   with positive scalar curvature given by $\Sca^{g}=4\kappa^{2}n(n-1)=\frac{9(n-1)\gamma^{2}}{4n}$.    After that,   Myers's theorem  ensures that $M$ is compact (here we also use that $M$ is complete). 
 
   Next we  present  a different proof of the fact that $g$ is Einstein,   without using arguments of the type that  such a  spinor  must be a real Killing spinor.   Our computations take  place in the spinor bundle and  it is very useful to  start   with a proof of  assertion $(b)$ i.e.  we provide  first the existence of a   $\nabla^{c}$-Einstein structure (and its explicit form), and then we use this fact to describe the  original Einstein condition.   Notice that  this approach differs  from  the   way that $\nabla^{c}$-Einstein structures have been  traditionally examined in dimensions 6 and 7, see \cite{FrIv} and compare with our Examples \ref{exnk} and \ref{exg2}   below. Observe also that  one is not allowed to apply Corollary \ref{lemkst}  to compute the Ricci tensor $\Ric^{c}$, since  $\varphi$ is a Killing spinor with torsion only for $s\neq 0, 1/4$ (recall that we do not view $\nabla^{s}$-parallel spinors as KsT), and this    asserts to our approach a much more special character. Finally, we mention that the $\nabla^{c}$-Einstein condition plays also a crucial role  for the proof of   the last claim, since it encodes the action  $(X\lrcorner \sigma_{T})\cdot\varphi=\frac{3\gamma^{2}(n-3)}{n^{2}}X\cdot\varphi$ (recall that $\varphi$ is always parallel with respect to $\nabla^{c}=\nabla^{g}+\frac{1}{2}T$).  

    \begin{proof}
 \noindent{\bf (b)}   According to Theorem \ref{KNOWN1}, the Ricci tensor $\Ric^{c}$ is given by  $\Ric^{c}(X)\cdot\varphi =\frac{1}{2}(X\lrcorner dT)\cdot\varphi =(X\lrcorner \sigma_{T})\cdot\varphi$.  On the other hand, for any vector field $X$ it holds that \cite[p.~325]{ABK}
\[
-2(X\lrcorner \sigma_{T})=\frac{1}{2}(T^{2}\cdot X-X\cdot T^{2})=(X\lrcorner T)\cdot T-T\cdot (X\lrcorner T).
\]
 To give a few hints for this very useful rule, let us   write first $-X\lrcorner \sigma_{T}=\frac{1}{2}(X\cdot\sigma_{T}-\sigma_{T}\cdot X)$.  Replacing in the right-hand side the 4-form $\sigma_{T}$ by $\sigma_{T}=\frac{1}{2}(\|T\|^{2}-T^{2})$,   the first relation becomes obvious.   Notice now that $X\cdot T+T\cdot X=-2(X\lrcorner T)$. Then,   combining  the resulting formulae of the Clifford multiplication with $T$, once from the left and once from the right, we see that $(T^{2}\cdot X-X\cdot T^{2})=2\Big((X\lrcorner T)\cdot T-T\cdot (X\lrcorner T)\Big)$.  Hence,  the action of the Ricci endomorphism $\Ric^{c}(X)$ on $\nabla^{c}$-parallel spinors  reads
\[
 \Ric^{c}(X)\cdot\varphi =-\frac{1}{2}\Big[(X\lrcorner T)\cdot T-T\cdot (X\lrcorner T)\Big]\cdot\varphi.  \quad {\small{(\clubsuit)}}
 \]
 Now, our assumption tell us that $\varphi\in\ke(P^{s}\big|_{\Sigma_{\gamma}})$ for some $s\neq  1/4$ and $\gamma\neq 0$; thus,   (\ref{t1s}) needs to hold and this is the key information  that the twistor equation carries (of course, due to Theorem \ref{general1}, the same holds if we start with a $\nabla^{c}$-parallel real Killing spinor $\varphi$ with $\kappa=3\gamma/4n$, see also Proposition \ref{klik}).  Based on this formula  and using   $T\cdot\varphi=\gamma\varphi$, one easily computes
 \begin{eqnarray*}
 \Big[(X\lrcorner T)\cdot T-T\cdot (X\lrcorner T)\Big]\cdot\varphi&=&\gamma(X\lrcorner T)\cdot\varphi+\frac{3\gamma}{n}T\cdot X\cdot \varphi\\
 &=&-\frac{3\gamma^{2}}{n}X\cdot\varphi+\frac{3\gamma}{n}\Big(-X\cdot T\cdot\varphi-2(X\lrcorner T)\cdot\varphi\Big)\\
 &=&-\frac{3\gamma^{2}}{n}X\cdot\varphi-\frac{3\gamma^{2}}{n}X\cdot\varphi-\frac{6\gamma}{n}(X\lrcorner T)\cdot\varphi\\
 &=&-\frac{6\gamma^{2}}{n}X\cdot\varphi+\frac{18\gamma^{2}}{n^{2}}X\cdot\varphi \\
 &=&\frac{6\gamma^{2}}{n}\Big[\frac{3}{n}-1\Big]X\cdot\varphi.
 \end{eqnarray*}
Thus, any $X\in\Gamma(TM)$ satisfies the equation 
\begin{equation}\label{ricapl}
 \Ric^{c}(X)\cdot\varphi =\frac{3(n-3)\gamma^{2}}{n^{2}}X\cdot\varphi.
 \end{equation}
Now,  $\varphi$ cannot have zeros, since for example by Theorem \ref{general1} it  is also a non-trivial KsT with respect to the family $\nabla^{s}$ and so parallel with respect to the connection $\nabla:=\nabla^{s}_{X}-\zeta X$, where $\zeta=\frac{3(1-4s)\gamma}{4n}\neq 0$ (and the same occurs for instance since $\varphi$ is $\nabla^{c}$-parallel).  It follows that  for $n>3$, the triple  $(M^{n}, g, T)$ is a strict $\nabla^{c}$-Einstein manifold with constant  positive scalar curvature $\Sca^{c}=\frac{3(n-3)\gamma^{2}}{n}$.  For $n=3$ we get $\Ric^{c}(X)\cdot\varphi=0$ for any $X$, so $(M^{3}, g, T)$ is $\Ric^{c}$-flat.   
 
  \smallskip
\noindent{\bf Alternative proof of (a)}
In the proof of \cite[Thm.~A.2]{ABK} (see page 325), for  $s=\frac{(n-1)}{4(n-3)}$ and for the family $\nabla^{s}_{X}\varphi=\nabla^{c}+\lambda(X\lrcorner T)\cdot\varphi$  with $\lambda:=\frac{1}{2(n-3)}$,  the following formula was presented for the curvature tensor $R^{s} : \Lambda^{2}(TM)\to\Ed(\Sigma)$ associated to $\nabla^{s}$:
\begin{eqnarray*}
\sum_{i}e_{i}\cdot R^{s}(X, e_{i})\varphi&=&\sum_{i}e_{i}\cdot R^{c}(X, e_{i})\varphi-6\lambda^{2}(X\lrcorner \sigma_{T})\cdot\varphi\nonumber\\
&&\quad\quad\quad\quad-(2\lambda^{2}+\lambda)\sum_{i}T(X, e_{i})\cdot (e_{i}\lrcorner T)\cdot\varphi,
\end{eqnarray*}
 where $\{e_{1}, \ldots, e_{n}\}$ is an orthonormal frame of $M$. Exactly the same  formula holds   for $\lambda=\frac{4s-1}{4}$, i.e. our family $\nabla^{s}$ and for any $s\in\bb{R}$.  In particular, for $s=\frac{(n-1)}{4(n-3)}$ the quantity  $\frac{4s-1}{4}$ is nothing than 
 the fixed $\lambda:=\frac{1}{2(n-3)}$.  Therefore, in the previous formula one can replace $\lambda$ by $\frac{4s-1}{4}$  and let  $s$ running in $\bb{R}$. Then, for $s=0$ we get a  useful  expression between the curvature tensors $R^{c}$ and $R^{g}$ associated to the characteristic connection and the Riemannian connection, respectively:
 \begin{equation}\label{conv}
 \sum_{i}e_{i}\cdot R^{g}(X, e_{i})\varphi=\sum_{i}e_{i}\cdot R^{c}(X, e_{i})\varphi-\frac{6}{16}(X\lrcorner \sigma_{T})\cdot\varphi+\frac{1}{8}\sum_{i}T(X, e_{i})\cdot (e_{i}\lrcorner T)\cdot\varphi.
 \end{equation}
This formula holds for any $\varphi\in \Gamma(\Sigma)$ and $X\in\Gamma(TM)$ and plays  a crucial role in what follows.

\noindent So, let us emphasize on our case.  By assumption $\varphi\in\Sigma_{\gamma}$ is $\nabla^{c}$-parallel, hence the first term   in the right hand side vanishes, i.e. $\sum_{i}e_{i}\cdot R^{c}(X, e_{i})\varphi\equiv 0$ identically.  Moreover, we  know from (b) that  $(M^{n}, g, T)$ is $\nabla^{c}$-Einstein with respect to the same metric $g$, and this determines the  second term: $(X\lrcorner\sigma_{T})\cdot\varphi=\Ric^{c}(X)\cdot\varphi=-\frac{3(3-n)\gamma^{2}}{n^{2}}X\cdot\varphi$.  Finally, for the computation of the third term we take advantage of the fact that $\varphi\in\ke(P^{s}\big|_{\Sigma_{\gamma}})$ for some $s\neq  1/4$ (which is equivalent to (\ref{t1s})).  Applying successively  this relation, yields
 \begin{eqnarray*}
 \sum_{i}T(X, e_{i})\cdot (e_{i}\lrcorner T)\cdot\varphi&=&-\frac{3\gamma}{n}\sum_{i}T(X, e_{i})\cdot e_{i}\cdot\varphi \nonumber\\
 &=&\frac{6\gamma}{n}(X\lrcorner T)\cdot\varphi=-\frac{18\gamma^{2}}{n^{2}}X\cdot\varphi.
  \end{eqnarray*}
Thus we obtain altogether: 
\[
 \sum_{i}e_{i}\cdot R^{g}(X, e_{i})\varphi=-\frac{9(n-1)\gamma^{2}}{8n^{2}}X\cdot\varphi,
\]
  for any  $X\in\Gamma(TM)$. Observing that $\sum_{i}e_{i}\cdot R^{g}(X, e_{i})\varphi=-\frac{1}{2}\Ric^{g}(X)\cdot\varphi$, see \cite[p.~15]{Baum}, one can easily finish the proof of part $(a)$.  
  
  \noindent{\bf (c)} Now,  the last part and the stated formulas for  $\Ric^{s}, \Sca^{s}$  is an immediate consequence of   Corollary \ref{lemkst} in combination with Proposition \ref{klik} and   relation  $(X\lrcorner \sigma_{T})\cdot\varphi=\frac{3\gamma^{2}(n-3)}{n^{2}}X\cdot\varphi$, which  still makes sense. We mention once more  that we  apply Corollary \ref{lemkst} for   $s\neq 0, 1/4$. However, the stated  formulas of $\Ric^{s}, \Sca^{s}$ produce the right results for any $s\in\bb{R}$ (even for $s=0, 1/4$). This completes the proof.
        \end{proof}
\begin{example}\label{exnk}\textnormal{(see also \cite[Prop.~10.4]{FrIv})}
\textnormal{Consider a nearly K\"ahler manifold $(M^{6}, g, J)$. Recall that there exist two $\nabla^{c}$-parallel spinors $\varphi^{\pm}$ with  $\gamma=\pm 2\|T\|$ which are both TsT for some $s\neq   1/4$. Hence, due to Proposition  \ref{bravo} we conclude that
\[
 \Ric^{s}(X)\cdot\varphi^{\pm}=\frac{(5-16s^{2})}{4}\|T\|^{2} X\cdot\varphi^{\pm}=\frac{(5-16s^{2})}{2}\tau_{0}X\cdot\varphi^{\pm}, \quad \forall \ s\in\bb{R}, 
 \]
 in particular
  \begin{eqnarray*}
    \Ric^{c}(X)\cdot\varphi^{\pm}&=&\frac{3(n-3)\gamma^{2}}{n^{2}}X\cdot\varphi^{\pm} \  \Rightarrow \  \Ric^{c}(X)\cdot\varphi^{\pm}=\|T\|^{2}X\cdot\varphi^{\pm}=2\tau_{0}X\cdot\varphi^{\pm} ,\\
    \Ric^{g}(X)\cdot\varphi^{\pm}&=&\frac{9(n-1)\gamma^{2}}{4n^{2}}X\cdot\varphi^{\pm} \  \Rightarrow \   \Ric^{g}(X)\cdot\varphi^{\pm}  =\frac{5}{4}\|T\|^{2}\cdot\varphi^{\pm}=\frac{5}{2}\tau_{0}X\cdot\varphi^{\pm}.
    \end{eqnarray*}
 On the other hand, relation    (\ref{wrong}) is still available for a straightforward  computation of  the Ricci tensor $\Ric^{c}$. Indeed, a direct computation shows that
  \begin{eqnarray*}
 \Big[(X\lrcorner T)\cdot T-T\cdot (X\lrcorner T)\Big]\cdot\varphi^{\pm}&=&\pm 2\|T\|(X\lrcorner T)\cdot\varphi^{\pm}\pm \|T\|T\cdot X\cdot \varphi^{\pm}\\
 &=&-2\|T\|^{2}X\cdot\varphi^{\pm}\pm\|T\|\Big(-2(X\lrcorner T)\cdot\varphi^{\pm}-X\cdot T \cdot\varphi^{\pm}\Big)\\
 &=&-2\|T\|^{2}X\cdot\varphi^{\pm}\pm\|T\|\Big(\pm 2\|T\|X\cdot\varphi^{\pm}\mp 2\|T\|X\cdot\varphi^{\pm}\Big)\\
 &=&-2\|T\|^{2}X\cdot\varphi^{\pm},
 \end{eqnarray*}
 and the result now is an immediate consequence of ${\small{(\clubsuit)}}$.  In this spinorial way and for $s=0, 1/4$, we overlap the results of  \cite[Prop.~10.4]{FrIv}. Mention however that our method is different, i.e. we are  {\it not}  based on the Einstein property of $g$ and    the relation $\Ric^{c}=\Ric^{g}-\frac{1}{4}S$. Finally, the type  $\Ric^{s}=\Ric^{g}-4s^{2}S$  produces the same results for all $s\in\bb{R}$, since $S:=2\tau_{0}\Id$, see  \cite[Prop.~10.4]{FrIv}. } 
 
\end{example}

 \begin{example}\label{exg2}\textnormal{(see also \cite[Thm.~5.1, Ex.~5.2]{FrIv})}
    \textnormal{Consider a    nearly parallel $\G_2$-manifold $(M^{7}, g, \omega)$. Recall that  there is a unique $\nabla^{c}$-parallel spinor field $\varphi_{0}$  with $\gamma=-\sqrt{7}\|T\|$. In a similar way with Example \ref{exnk}   and due to  Proposition \ref{bravo}, one gets that
       \[
\Ric^{s}(X)\cdot\varphi_{0}=\frac{6(9-16s^{2})}{28}\|T\|^{2}X\cdot\varphi_{0}=\frac{(9-16s^{2})}{24}\tau_{0}^{2}X\cdot\varphi_{0}, \quad \forall \ s\in\bb{R},
\]
     in particular
       \[
    \Ric^{c}(X)\cdot\varphi_{0}=\frac{12}{7}\|T\|^{2}X\cdot\varphi_{0}=\frac{\tau_{0}^{2}}{3}X\cdot\varphi_{0},\quad 
    \Ric^{g}(X)\cdot\varphi_{0}=\frac{27}{14}\|T\|^{2}X\cdot\varphi_{0}=\frac{3\tau_{0}^{2}}{8}X\cdot\varphi_{0}.
    \]
     In a line with nearly K\"ahler manifolds in dimension 6, relation (\ref{xtg2}) gives us the ability to compute $\Ric^{c}$ in a direct way.  Indeed, due to (\ref{teigen}) and (\ref{xtg2}), a little computation shows that
\[
 \Big[(X\lrcorner T)\cdot T-T\cdot (X\lrcorner T)\Big]\cdot\varphi_{0}=-\frac{24}{7}\|T\|^{2}X\cdot\varphi_{0},
\]
and the result follows by $(\clubsuit)$. Notice finally that one can reproduce all these results, using the formula $\Ric^{s}=\Ric^{g}-4s^{2}S$, see \cite[Ex.~5.2]{FrIv} for the tensor $S:=\frac{\tau_{0}^{2}}{6}\Id$. }
    \end{example}

\subsection{The 3-dimensional case}\label{friedrich}
 In      \cite[Ex.~7.2]{AF}  it is shortly explained that   the {\it unique} 3-dimensional  compact manifold carrying parallel spinors with respect to a metric connection with non-trivial  skew-torsion, is a space form, namely the round 3-sphere $\Ss^{3}=\Spin_{4}/\Spin_{3}$    endowed with the  canonical spin structure  $(P, \Lambda):= (\Spin_{3}\times\Spin_{3}, \Id _{\Spin_{3}}\times\lambda)$ where $\lambda : \Spin_{3}\to\SO_{3}$ is the double covering, 
 the  canonical metric $g:=g_{\rm can}$ of constant sectional  curvature 1   and  finally the volume form $T={\rm Vol}_{\Ss^{3}}$. 
 But let us explain how this can fit with our results and what new we can say. The spinor bundle $\Sigma:=P\times_{\SU_{2}}\Delta_{3}\to\Ss^{3}$ is trivialized through either the $-\frac{1}{2}$- or $\frac{1}{2}$-Killing spinors and   $T$ acts as the identity   operator  (and as a  scalar operator in the case that $T=f{\rm Vol}_{\Ss^{3}}$, for some constant $f$).  
By  \cite{AF} we know that the $\epsilon$-Killing spinors are   parallel with respect to the metric connections induced by the Killing spinor equation: $   \nabla^{\epsilon}_{X}\varphi=\nabla^{g}_{X}\varphi+\epsilon(X\lrcorner T)\cdot\varphi=\nabla^{g}_{X}\varphi- \epsilon X\cdot T\cdot \varphi=\nabla^{g}_{X}\varphi- \epsilon X\cdot \varphi=0$,   where  $\epsilon\in\{\pm\frac{1}{2}\}$.   For convenience, we set $T^{\epsilon}:=2T$ and rewrite  
\begin{eqnarray*}
\nabla^{\epsilon}_{X}\varphi&=&\nabla^{g}_{X}\varphi+\frac{\epsilon}{2}(X\lrcorner T^{\epsilon})\cdot\varphi=\nabla^{g}_{X}\varphi\pm \frac{1}{4}(X\lrcorner T^{\epsilon})\cdot\varphi=
 \nabla^{g}_{X}\varphi\mp \frac{1}{4}X\cdot  T^{\epsilon} \cdot\varphi=\nabla^{g}_{X}\varphi \mp\frac{1}{2} X\cdot  \varphi.
\end{eqnarray*}
 Now it is obvious  that $\nabla^{\epsilon}\equiv\nabla^{\pm 1/2}$ are metric connections with  skew-torsion 
 \[
 \pm T^{\epsilon}=\pm 2T=\pm 2 (e_{1}\wedge e_{2}\wedge e_{3}),\]
  such that $\nabla^{\epsilon}T^{\epsilon}=0$ (since  $\Ss^{3}$ is orientable and $T$ is the volume form). On $T\Ss^{3}$ one can write $\nabla^{\epsilon}=\nabla^{g}\pm \frac{1}{2}T^{\epsilon}$.
  Notice that since the sphere      $\Ss^{3}$ is diffeomorphic to the compact Lie group $\Spin_{3}\cong\SU_{2}\cong\Sp_{1}$ a characteristic (or canonical) connection  is not unique \cite{Olmos, Nagy, AFH}. Endowed with a bi-invariant metric and one of the $\pm$-canonical connections of Cartan-Schouten is flat and hence also $\Ric^{\pm}$-flat, see for instance \cite[Ex.~7.1]{AF}.    $\nabla^{\epsilon}$ is also flat, in particular   $\Ss^{3}$ is simply connected  and the flatness of $\nabla^{\epsilon}$ implies the triviality of the associated spinor bundle $\Sigma$ \cite[Lem.~1]{Bar2}. 
         
     \begin{lemma}
    $T^{\epsilon}(X, Y)\cdot\varphi=-(X\cdot Y-Y\cdot X)\cdot\varphi$, for any vector field $X, Y$ and spinor field $\varphi$. 
    \end{lemma}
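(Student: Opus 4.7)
The plan is to reduce the identity to a calculation on an orthonormal frame, exploiting the very special fact that in dimension $3$ the Clifford action of the Riemannian volume form is a scalar.

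First I would fix a local orthonormal frame $\{e_1,e_2,e_3\}$ on $\Ss^{3}$ with $T=e_1\wedge e_2\wedge e_3=\mathrm{Vol}_{\Ss^{3}}$, so that $T^{\epsilon}=2T=2(e_1\wedge e_2\wedge e_3)$. The vector field $T^{\epsilon}(X,Y)$ is determined by $g(T^{\epsilon}(X,Y),Z)=T^{\epsilon}(X,Y,Z)$, and evaluating on basis vectors gives $T^{\epsilon}(e_i,e_j)=2e_k$ whenever $(i,j,k)$ is a cyclic permutation of $(1,2,3)$. Note that both sides of the claimed identity are $C^{\infty}(M;\mathbb{R})$-bilinear and antisymmetric in $(X,Y)$, so once the identity is proved on frame elements, it follows in general by linearity.

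Next I would translate the hypothesis that $T$ acts as the identity operator on $\Sigma$ into an explicit Clifford algebra relation: $e_1\cdot e_2\cdot e_3\cdot\varphi=\varphi$ for any $\varphi\in\Gamma(\Sigma)$. Since $e_k\cdot e_k=-1$, right-multiplying by $e_k^{-1}=-e_k$ yields the useful rule
\[
e_i\cdot e_j\cdot\varphi=-e_k\cdot\varphi,\qquad (i,j,k)\text{ cyclic}.
\]
Combined with the anticommutation $e_i\cdot e_j+e_j\cdot e_i=-2g(e_i,e_j)=0$ for $i\neq j$, this gives $e_i\cdot e_j-e_j\cdot e_i=2e_i\cdot e_j\cdot\varphi=-2e_k\cdot\varphi$.

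Putting the two pieces together,
\[
-(e_i\cdot e_j-e_j\cdot e_i)\cdot\varphi=2e_k\cdot\varphi=T^{\epsilon}(e_i,e_j)\cdot\varphi,
\]
which is exactly the claim on frame vectors. Extending by bilinearity and antisymmetry to arbitrary $X,Y\in\Gamma(T\Ss^{3})$ completes the proof. There is no real obstacle here; the only delicate point is keeping the normalization consistent, namely that the Riemannian volume form (not the doubled form $T^{\epsilon}=2T$) is the element acting as $\mathrm{Id}_{\Sigma}$, so that the prefactor $2$ on the right side of the frame identity correctly matches the factor of $2$ appearing in $T^{\epsilon}(e_i,e_j)=2e_k$.
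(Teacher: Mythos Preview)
Your argument is correct, and it takes a genuinely different route from the paper. The paper gives two proofs, both differential-geometric: the first expands $\nabla^{\epsilon}_{X}\nabla^{\epsilon}_{Y}\varphi$ in two ways (once using B\"ar's formula, once using the Leibniz rule for $\nabla^{\epsilon}$ acting on $Y\cdot\varphi$) and compares, while the second writes out $R^{\epsilon}(X,Y)\varphi$ in terms of $R^{g}$ and the torsion, then invokes the flatness of $\nabla^{\epsilon}$ together with the constant-curvature formula $R^{g}(X,Y)\varphi=\tfrac{1}{4}(Y\cdot X-X\cdot Y)\cdot\varphi$. Your proof is instead purely Clifford-algebraic: it uses only that in dimension $3$ the volume form is central and acts as $\mathrm{Id}_{\Sigma}$, so that $e_{i}\cdot e_{j}=-e_{k}$ as endomorphisms for $(i,j,k)$ cyclic, and then matches this against $T^{\epsilon}(e_{i},e_{j})=2e_{k}$. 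This is shorter and more elementary; it isolates the identity as a pointwise algebraic fact rather than routing it through curvature. The paper's approaches, on the other hand, make transparent the link with the flatness of $\nabla^{\epsilon}$ and the sectional curvature $1$ of the round metric, which is what the surrounding discussion is really after. One small remark: your phrase ``right-multiplying by $e_{k}^{-1}$'' should be read as composition in $\mathrm{End}(\Sigma)$ (where all Clifford elements live as operators), not as a right action on spinors; with that understanding the step is perfectly valid.
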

  \begin{proof}
 {\bf 1st way.}  In \cite{Bar2}    appears  the following expression  (for $\epsilon=1/2$ and for a general sphere $\Ss^{n}$)
       \[
       \nabla^{\epsilon}_{X}\nabla^{\epsilon}_{Y}\varphi=(\nabla^{g}_{X}-\frac{1}{2}X)(\nabla^{g}_{Y}-\frac{1}{2}Y)\varphi=\nabla^{g}_{X}\nabla^{g}_{Y}\varphi-\frac{1}{2}Y\cdot\nabla^{g}_{X}\varphi-\frac{1}{2}X\cdot\nabla^{g}_{Y}\varphi+\frac{1}{4}X\cdot Y\cdot\varphi,
       \]
       where locally for the  Riemannian connection  one can assume   that $(\nabla^{g}X)(p)=(\nabla^{g}Y)(p)=0$, for some vector fields $X, Y\in\Gamma(T\Ss^{3})$ and $p\in \Ss^{3}$, see also \cite[p.~23]{Baum}.  However,  the same time we can write
       \begin{eqnarray*}
         \nabla^{\epsilon}_{X}\nabla^{\epsilon}_{Y}\varphi&=&\nabla^{\epsilon}_{X}(\nabla^{g}_{Y}\varphi-\frac{1}{2}Y\cdot\varphi)= \nabla^{\epsilon}_{X}\nabla^{g}_{Y}\varphi-\frac{1}{2}\nabla^{\epsilon}_{X}(Y\cdot\varphi)\\
        &=&         \nabla^{g}_{X}\nabla^{g}_{Y}\varphi-\frac{1}{2}X\cdot(\nabla^{g}_{Y}\varphi)-\frac{1}{2}(\nabla^{\epsilon}_{X}Y)\cdot\varphi-\frac{1}{2}Y\cdot \nabla^{g}_{X}\varphi+\frac{1}{4}Y\cdot X\cdot\varphi. \quad (\star)
               \end{eqnarray*}
A comparison  now with the previous  relation  shows that $(\nabla^{\epsilon}_{X}Y)\cdot\varphi=-\frac{1}{2}(X\cdot Y-Y\cdot X)\cdot\varphi$. Hence, after replacing $\nabla^{\epsilon}_{X}Y=\nabla^{g}_{X}Y+\frac{1}{2}T^{\epsilon}(X, Y)$,  we get     our assertion:
\[
T^{\epsilon}(X, Y)\cdot\varphi=-(X\cdot Y-Y\cdot X)\cdot\varphi.
\] 
     Similarly for the case of $\epsilon=-1/2$, i.e. the connection $\nabla^{-1/2}_{X}\varphi=\nabla^{g}_{X}\varphi+\frac{1}{2}X\cdot\varphi$ with torsion $-T^{\epsilon}$.
     
     \noindent {\bf 2nd way.}  (We again explain the case  $\epsilon=1/2$).  Notice that one can avoid the   assumption  $(\nabla^{g}X)(p)=(\nabla^{g}Y)(p)=0$, since  $(\star)$   itself is independent of this condition.  In the same way, we write
     \[
     \nabla^{\epsilon}_{Y}\nabla^{\epsilon}_{X}\varphi=\nabla^{g}_{Y}\nabla^{g}_{X}\varphi-\frac{1}{2}Y\cdot(\nabla^{g}_{X}\varphi)-\frac{1}{2}(\nabla^{\epsilon}_{Y}X)\cdot\varphi-\frac{1}{2}X\cdot \nabla^{g}_{Y}\varphi+\frac{1}{4}X\cdot Y\cdot\varphi 
     \]
    and as a consequence of the definition $R^{\epsilon}(X, Y)\varphi=   \nabla^{\epsilon}_{X}\nabla^{\epsilon}_{Y}\varphi-  \nabla^{\epsilon}_{Y}\nabla^{\epsilon}_{X}\varphi-\nabla^{\epsilon}_{[X, Y]}\varphi$, we see that (we give the expression for both $\epsilon\in\{\pm 1/2\}$)
     \begin{eqnarray}
     R^{\epsilon}(X, Y)\varphi
     &=&R^{g}(X, Y)\varphi\mp\frac{1}{2}\Big[\nabla^{\epsilon}_{X}Y-\nabla^{\epsilon}_{Y}X-[X, Y]\Big]\cdot\varphi+\frac{1}{4}(Y\cdot X-X\cdot Y)\cdot\varphi\nonumber\\
     &=&R^{g}(X, Y)\varphi-\frac{1}{2}T^{\epsilon}(X, Y)\varphi+\frac{1}{4}(Y\cdot X-X\cdot Y)\cdot\varphi.\label{S31}
     \end{eqnarray} 
           Here,  $\nabla^{\epsilon}_{X}Y- \nabla^{\epsilon}_{Y}X-[X, Y]$ equals  to $\pm T^{\epsilon}(X, Y)$, depending on $\epsilon\in\{\pm 1/2\}$.  Now,   based on the fact  that  $\Ss^{3}$ has constant sectional curvature 1  one computes $R^{g}(X, Y)\varphi=\frac{1}{4}(Y\cdot X-X\cdot Y)\cdot\varphi$, see \cite{Bar2} (or \cite[Thm.~8, p.~30]{Baum}). Combining this with  $R^{\epsilon}\equiv 0$, relation (\ref{S31}) gives  rise to the desired result.
            \end{proof}
        Let us identify the tangent space $T_{o}\Ss^{3}$ with $\fr{g}:=\fr{su}(3)=\fr{spin}(3)=\fr{so}(3)$ and denote by $\{e_{1}, e_{2}, e_{e}\}$   the left-invariant vector fields associated to a basis of $\fr{g}$. Then,  $T^{\epsilon}(e_{i}, e_{j})\cdot\varphi=-(e_{i}\cdot e_{j}-e_{j}\cdot e_{i})\cdot\varphi=-[e_{i}, e_{j}]\cdot\varphi$ holds for any $\varphi\in\Gamma(\Sigma)$. Because $\varphi$ can be written as a linear combination of  Killing spinors, $\varphi$ has no-zeros and thus $T^{\epsilon}(e_{i}, e_{j})=-[e_{i}, e_{j}]$.  Hence, this spinorial approach allows us to deduce that the torsion $\pm T^{\epsilon}$ coincides with the torsion of the $\pm 1$-Cartan-Schouten connections, as it should be due to the uniqueness of $\nabla^{\pm 1}$. 

Relation (\ref{S31}) has another remarkable application; it implies  that $(\Ss^{3}, g)$ is Einstein, without using    arguments of the type  that $\Ss^{3}$ is a space form in dimension 3, neither a manifold carrying  Killing spinors, nor  an isotropy irreducible homogeneous space. In fact, we do not  even use  the flatness of  $\nabla^{\epsilon}$, but only the fact that given a trivialization $\{\varphi_{j} : 1\leq j\leq 2^{[\frac{3}{2}]}\}$ of $\Sigma$ by $\epsilon$-Killing spinors, then the relation $\nabla^{\epsilon}\varphi_{j}=0$ needs to hold   $\forall \ j$ (in a similar way with nearly K\"ahler and nearly parallel $\G_2$-manifolds).            Indeed,   since $T^{\epsilon}\cdot\varphi_{j}=2T\cdot\varphi_{j}=2\varphi_{j}$,  applying Clifford multiplication  on (\ref{S31}) with respect to the orthonormal frame  $\{e_{1}, e_{2}, e_{3}\}$ and finally adding,   yields
  \begin{eqnarray*}
\sum_{i}e_{i}\cdot R^{\epsilon}(X, e_{i})\varphi_{j}&=&\sum_{i}e_{i}\cdot R^{g}(X, e_{i})\varphi_{j}-\frac{1}{2}\sum_{i}e_{i}\cdot T^{\epsilon}(X, e_{i})\cdot\varphi_{j} +\frac{1}{4}\sum_{i}e_{i}\cdot(e_{i}\cdot X-X\cdot e_{i})\cdot\varphi_{j}\\
&=&-\frac{1}{2}\Ric^{g}(X)\cdot\varphi_{j}-(X\lrcorner T^{\epsilon})\cdot\varphi_{j}-X\cdot\varphi_{j}=-\frac{1}{2}\Ric^{g}(X)\cdot\varphi_{j}+X\cdot T^{\epsilon}\cdot\varphi_{j}-X\cdot\varphi_{j}\\
&=&-\frac{1}{2}\Ric^{g}(X)\cdot\varphi_{j}+2X\cdot\varphi_{j}-X\cdot\varphi_{j}.
\end{eqnarray*}
           Because $\nabla^{\pm 1/2}\varphi_{j}=0$,  the left-hand side vanishes and the resulting formulae $\Ric^{g}(X)\cdot\varphi_{j}=2X\cdot\varphi_{j}$ shows that    $(\Ss^{3}, g_{\rm can})$ is   Einstein  with Einstein constant $2=\Sca^{g}/n=6/3$, see also \cite{Fr1980}.                   
 
 \smallskip
 Now, because $\nabla^{\epsilon}$ are flat, $(\Ss^{3}, g_{\rm can})$   is automatically $\Ric^{\epsilon}$-flat. This coincides with the statement of  Proposition \ref{bravo} (here we only allow $\epsilon=1/2$), namely  
  \begin{eqnarray*}
  \Ric^{g}(X)\cdot\varphi_{j}&=&\frac{9(n-1)\gamma^{2}}{4n^{2}}X\cdot\varphi_{j} \  \Rightarrow \  \Ric^{g}(X)\cdot\varphi_{j}=2X\cdot\varphi_{j},\\
  \Ric^{\epsilon}(X)\cdot\varphi_{j}&=&\frac{3(n-3)\gamma^{2}}{n^{2}}X\cdot\varphi_{j} \Rightarrow \ \Ric^{\epsilon}(X)\cdot\varphi_{j}=0.
  \end{eqnarray*}
More generally, $\Ric^{s}(X)\cdot\varphi_{j}=2(1-16s^{2})X\cdot\varphi_{j}$, i.e. 
\[
\Ric^{s}=2(1-16s^{2})\Id, \quad   \forall \ s\in\bb{R}.
\]
  Of course, and with the aim to apply Proposition \ref{bravo},  one has to consider  first the family
      \[
    \nabla^{\epsilon, s}_{X}\varphi:=\nabla^{g}_{X}\varphi+s(X\lrcorner T^{\epsilon})\cdot\varphi.
      \]
     For $s=\pm 1/4$ it induces the flat connections $\nabla^{\epsilon, \pm 1/4}=\nabla^{\epsilon}=\nabla^{\pm 1/2}$ and for $s=0$  it coincides with the spinorial Riemannian connection. Because the trivialization $\{\varphi_{j} : 1\leq j\leq 2^{[\frac{3}{2}]}\}$ of $\Sigma$ consists of $\epsilon$-Killing spinors which are $\nabla^{\epsilon}$-parallel, Theorem \ref{general1} states that
     \begin{theorem}\label{s3}
   There is a one-to-one correspondence between $\epsilon$-Killing spinors on    $(\Ss^{3}, g_{\rm can}, T^{\epsilon})$ and  Killing spinors with torsion with respect to the family $\nabla^{\epsilon, s}$ for any $s\neq 0, 1/4$, with Killing number $\zeta=\frac{1-4s}{2}$, i.e. $\nabla^{\epsilon, s}_{X}\varphi_{j}=\frac{1-4s}{2}X\cdot\varphi_{j}$, $\forall \ X\in\Gamma(T\Ss^{3})$.   In particular, a 3-dimensional compact spin  manifold   $(M^{3}, g, T)$ satisfying the assumptions of Proposition \ref{bravo}, is isometric to   $(\Ss^{3}, g_{\rm can}, T^{\epsilon})$.
           \end{theorem}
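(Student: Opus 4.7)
Plan: The theorem splits into two independent assertions. The first is a direct specialization of Theorem \ref{general1} to $(\Ss^{3}, g_{\rm can}, T^{\epsilon})$, while the second combines Proposition \ref{bravo} with the uniqueness statement of \cite[Ex.~7.2]{AF}.

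First I would verify that the hypotheses of Theorem \ref{general1} are met in this setting. The 3-form $T^{\epsilon} = \pm 2\,\mathrm{Vol}_{\Ss^{3}}$ is $\nabla^{\epsilon}$-parallel because $\Ss^{3}$ is orientable and $T$ is a multiple of the volume form; the connection $\nabla^{\epsilon, 1/4} = \nabla^{\epsilon}$ plays the role of the characteristic connection with respect to $T^{\epsilon}$; the $\epsilon$-Killing spinors $\varphi_{j}$ are $\nabla^{\epsilon}$-parallel, as already recalled in the text; and since $T$ acts on $\Sigma$ as the identity, each $\varphi_{j}$ sits in the eigenspace $\Sigma_{\gamma}$ of $T^{\epsilon}$ with $\gamma = \pm 2$. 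Substituting $n = 3$ and $\gamma = \pm 2$ into Theorem \ref{general1} yields the Killing number $\zeta = 3(1-4s)\gamma/(4n) = \pm(1-4s)/2$ for any $s \in \bb{R}\setminus\{0, 1/4\}$, proving one direction. Conversely, a KsT with Killing number $\zeta = \pm(1-4s)/2$ that is simultaneously $\nabla^{\epsilon}$-parallel must, by the same theorem, be a Riemannian Killing spinor with $\kappa = \pm 1/2$, hence an $\epsilon$-Killing spinor. This establishes the bijection.

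For the classification assertion, let $(M^{3}, g, T)$ satisfy the hypotheses of Proposition \ref{bravo}. Part (a) of that proposition forces $M^{3}$ to be compact and Einstein with positive scalar curvature $\Sca^{g} = \frac{9(n-1)\gamma^{2}}{4n} = \frac{3\gamma^{2}}{2}$. In dimension three, Einstein is equivalent to constant sectional curvature, so $(M^{3}, g)$ is a spherical space form $\Ss^{3}/\Gamma$ of constant curvature $\gamma^{2}/4$. The uniqueness statement of \cite[Ex.~7.2]{AF}, which asserts that the only 3-dimensional compact manifold carrying parallel spinors under a metric connection with non-trivial skew-torsion is $\Ss^{3}$ itself, now forces $\Gamma = \{e\}$, so $M \cong \Ss^{3}$, with the rescaling of $g$ pinned down by the Killing number $\kappa = \gamma/4$. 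The spinorial computation recalled earlier in this section, namely $T(e_{i}, e_{j})\cdot\varphi = -[e_{i}, e_{j}]\cdot\varphi$ derived from the curvature identity (\ref{S31}), then identifies the torsion $T$ (up to sign) with $T^{\epsilon}$, completing the identification $(M^{3}, g, T) \cong (\Ss^{3}, g_{\rm can}, T^{\epsilon})$.

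The main obstacle lies in the classification step, specifically in excluding non-trivial finite quotients of $\Ss^{3}$: while Einstein in dimension 3 yields a space form almost for free, ruling out lens spaces and other spherical space forms genuinely requires input about how the deck group $\Gamma$ interacts with the spin structure and with the $\nabla^{c}$-parallel spinor. The cleanest route is to invoke the cited result from \cite{AF}; an independent proof would have to verify that both the $\nabla^{c}$-parallel spinor and the skew-torsion descend to $\Ss^{3}/\Gamma$ only when $\Gamma$ is trivial, which ultimately hinges on the action of $\Gamma$ on the spin representation and on the Killing spinor trivialization of $\Sigma$.
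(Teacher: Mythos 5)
Your proposal is correct and follows essentially the same route as the paper: the bijection is the specialization of Theorem \ref{general1} to $n=3$, $\gamma=\pm 2$ (the paper also records a two-line direct computation using that $T=\mathrm{Vol}_{\Ss^{3}}$ acts as the identity, so $(X\lrcorner T^{\epsilon})\cdot\varphi=-X\cdot T^{\epsilon}\cdot\varphi=-2X\cdot\varphi$ for \emph{every} spinor), and the isometry statement rests on Proposition \ref{bravo}(a) together with the uniqueness result of \cite[Ex.~7.2]{AF}, exactly as you argue. The one point to tighten is your converse direction: you add the hypothesis that the KsT is $\nabla^{\epsilon}$-parallel so as to invoke Theorem \ref{general1}, whereas for a genuine one-to-one correspondence this must be automatic --- and it is in dimension 3, because $T^{\epsilon}$ is central and acts as a scalar on all of $\Sigma$, so every $\varphi\in\cal{K}^{s}(\Ss^{3},g)_{(1-4s)/2}$ satisfies $\nabla^{g}_{X}\varphi=\tfrac12 X\cdot\varphi$ directly, which is how the paper closes the loop.
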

        \begin{proof}
Let us shortly present a direct proof. If $\nabla^{g}_{X}\varphi=\frac{1}{2}X\cdot\varphi$ for any $X\in\Gamma(T\Ss^{3})$, then
        \begin{eqnarray*}
        \nabla^{\epsilon, s}_{X}\varphi&=&\frac{1}{2}X\cdot\varphi+s(X\lrcorner T^{\epsilon})\cdot\varphi=\frac{1}{2}X\cdot\varphi+2s(X\lrcorner T)\cdot\varphi\\
        &=&\frac{1}{2}X\cdot\varphi-2sX\cdot T\cdot\varphi=\frac{1-4s}{2}X\cdot\varphi.
        \end{eqnarray*}
        Conversely, if $\varphi\in\cal{K}^{s}(\Ss^{3}, g)_{\zeta}$ with $\zeta=\frac{1-4s}{2}$ for some $s\neq 0, 1/4$, then
        $\nabla^{\epsilon, s}_{X}\varphi=\zeta X\cdot\varphi$ and thus 
        \begin{eqnarray*}
        \nabla^{g}_{X}\varphi=\zeta X\cdot\varphi-s(X\lrcorner T^{\epsilon})\cdot\varphi=\frac{1-4s}{2}X\cdot\varphi+2sX\cdot T\cdot\varphi=\frac{1}{2}X\cdot\varphi.
        \end{eqnarray*}
        \end{proof}

 \subsection{A partial classification}
 
  We deduce that on a triple $(M^{n}, g, T)$ with $\nabla^{c}T=0$, the existence of a spinor field $\varphi$ satisfying simultaneously the equations  
  \begin{equation}\label{cla}
  \nabla^{c}_{X}\varphi=0, \quad \nabla^{s}_{X}\varphi=\zeta X\cdot\varphi, \quad \forall \ X\in\Gamma(TM),
  \end{equation} 
    for some real numbers $s\neq  0, 1/4$, $\zeta\neq 0$, where $\nabla^{s}=\nabla^{g}+2sT$, imposes    much harder geometric restrictions than the original Killing spinor equation, namely:
\[
\begin{tabular}{l | c | l }
& {\sc Type of Killing spinors} &  {\sc  Geometric conclusions} \\
\thickline
   $\bold(\al)$ &  Killing spinors with Killing number  $\kappa\in\bb{R}\backslash\{0\}$  &     \  $\bullet$ \  $\Ric^{g}=4\kappa^{2}(n-1)g$, \  $\Sca^{g}=4\kappa^{2}n(n-1)$ \\
   \hline
   $\bold(\be)$ &  $\nabla^{c}$-parallel KsT  w.r.t. $\nabla^{s}=\nabla^{g}+2sT$  &   \  $\bullet$ \  $\varphi$ is a real Killing spinor: $T\cdot\varphi=\gamma\cdot\varphi\neq 0$ \\
      & with Killing number $\zeta=\frac{3(1-4s)\gamma}{n}\neq 0$  &   \ $\bullet$ \ $\Ric^{s}=\frac{\Sca^{s}}{n}g$ \ $\forall \ s\in\bb{R}$, in particular :\\
      & for some $\bb{R}\ni\gamma\neq0$, $\bb{R}\ni s\neq 0, 1/4$  & \  $-$ \ $\Ric^{g}=\frac{9(n-1)\gamma^{2}}{4n^{2}}g$, $\Sca^{g}=\frac{9(n-1)\gamma^{2}}{4n}$  \\
  & &    \  $-$ \  $\Ric^{c}=\frac{3(n-3)\gamma^{2}}{n^{2}}g$, $\Sca^{c}=\frac{3(n-3)\gamma^{2}}{n}$
\end{tabular}
\]
    One has to stress that   this  is not the case in general;  there exist Killing spinors with torsion (KsT) which are not   real Killing spinors, and thus manifolds which are not necessarily Einstein can be endowed with them, e.g. the Heisenberg group, see  \cite[pp.~54--57]{Julia} and \cite{AHol}.  

      We conclude that  there are  several examples of special structures   endowed with their characteristic connection which {\it fail} to carry this special kind of KsT (or TsT).  Actually,  since such a $\nabla^{c}$-parallel KsT  must be finally  a real Killing spinor, we need only to focus   on  special  structures carrying Riemannian Killing spinors. Such structures   have been classified  in dimensions $4\leq n\leq 8$   by Th. Friedrich's school in Berlin, see    \cite{Fr1980}, \cite{Fr4}, \cite{FG}, \cite{FrKath},   \cite{Gr}, \cite{FrK} and   \cite{FKMS}.  According to   \cite[Thm.~1]{FrK}, any  Einstein-Sasakian manifold   $(M^{2m+1}, g, \xi, \eta, \phi)$ admits   real Killing spinors. In particular, in dimension 5  such manifolds together wight the standard sphere exhaust all possible cases  \cite{FrKath}.  In dimension $7$,   the special structure which carries    real Killing spinors  is necessarily  a nearly parallel $\G_2$-structure, see   \cite{FKMS} for the three different types.   Finally, for higher odd dimensions $4m+1\geq 9, 4m+3\geq 11$ we know by   \cite{Bar} that only spheres, Einstein-Sasakian manifolds and 3-Sasakian manifolds can admit this special kind of spinors, while   in even dimensions, beyond the 6-dimensional nearly K\"ahler manifolds (see \cite{FG, Gr}),   the unique members are the standard spheres. We also refer to \cite[p.~143]{Ginoux} for a summary of all these results.
      
Notice now that  \cite[Lem.~2.23]{AFer} states that if   an almost contact metric structure $(M^{2m+1}, g, \xi, \eta, \phi)$ is $
 \nabla^{c}$-Einstein with respect to a characteristic connection, then it must be $\nabla^{c}$-Ricci flat. So,   such a manifold  is never strict $\nabla^{c}$-Einstein. In particular,  an Einstein-Sasaki manifold  $M^{2m+1}$   cannot be  $\nabla^{c}$-Einstein, see \cite[Rem.~2.26]{AFer},  hence compact  Einstein-Sasakian spin manifolds $(M^{2m+1}, g, \xi, \eta, \phi)$  in any odd dimension $n=2m+1\geq 5$,  although manifolds with real Killing spinors, cannot carry a $\nabla^{c}$-parallel KsT with respect to $\nabla^{s}=\nabla^{g}+2sT$.                  	The same comes true for the Tanno deformation of an Einstein-Sasakian manifold  (see below for the Tanno deformation); in the best case there is a specific parameter $t=t_{o}$  which makes  $(M^{2m+1}, g_{t}, \xi_{t}, \eta_{t}, \phi)$,  $\Ric^{\nabla^{t}}$-flat  with respect to the induced characteristic connection $\nabla^{t}=\nabla^{g_{t}}+\frac{1}{2}\eta_{t}\wedge d\eta_{t}$, see \cite[Thm.~2.24]{AFer}.
This show that the $\nabla^{c}$-Einstein condition is still very restrictive and our KsT are very special.  We remark  that  in dimension 5, and for general KsT a similar ``non-existence''  result has been described (using another integrability condition) in \cite[Cor.~A.2]{ABK}. 

After  this discussion  and due to Proposition \ref{bravo} and Theorems \ref{nk},  \ref{nG2}, \ref{s3}, we  summarise as follows: 
       \begin{theorem}
       Let $(M^{n}, g, T)$ be a compact connected Riemannian spin manifold  with $\nabla^{c}T=0$, endowed with a spinor field satisfying (\ref{cla}) with respect to the same Riemannian metric $g$.  If $n=3$, then $M^{3}\cong \Ss^{3}$ is isometric to the 3-sphere. If $n=6$, then $M^{6}$ is isometric to a strict nearly K\"ahler manifold. If $n=7$, then $M^{7}$ is isometric to a   nearly parallel $\G_2$-manifold.  
       \end{theorem}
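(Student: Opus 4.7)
\smallskip
\noindent \textbf{Proof proposal.}
The plan is to reduce the hypothesis (\ref{cla}) to the existence of a Riemannian Killing spinor with a very specific Killing number, and then feed this together with the $\nabla^{c}$-Einstein condition into the known classification of Riemannian manifolds carrying real Killing spinors. Starting from a spinor $\varphi$ satisfying $\nabla^{c}\varphi=0$ and $\nabla^{s}_{X}\varphi=\zeta X\cdot\varphi$ for some $s\neq 0,1/4$ and $\zeta\neq 0$, Theorem \ref{general1} produces a real number $\gamma\neq 0$ such that $T\cdot\varphi=\gamma\varphi$, $\zeta=3(1-4s)\gamma/4n$, and moreover $\varphi\in\cal{K}(M^{n},g)_{3\gamma/4n}$ (so in particular $g$ is Einstein by (\ref{rig}) and $M$ is compact). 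Proposition \ref{bravo} then tells us that, for $n>3$, the triple $(M^{n},g,T)$ is \emph{strictly} $\nabla^{c}$-Einstein with $\Sca^{c}=3(n-3)\gamma^{2}/n\neq 0$ and constant parallel torsion of length $\|T\|^{2}>0$; for $n=3$ it is merely $\Ric^{c}$-flat.

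For $n=3$ I would invoke Theorem \ref{s3} directly: since $\varphi$ is a real Killing spinor on a compact 3-manifold, $(M^{3},g)$ has constant positive sectional curvature and is thus isometric to the round sphere $(\Ss^{3},g_{\rm can})$; the torsion is then forced to be a (non-zero) multiple of the volume form, so $T=\pm T^{\ep}$ and we recover the Cartan--Schouten setting described in Section \ref{friedrich}.

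For $n=6$ the classification of compact simply-connected Riemannian spin manifolds carrying a real Killing spinor (Friedrich--Grunewald, Grunewald) forces $(M^{6},g)$ to be either the round sphere $\Ss^{6}$ or a strict nearly K\"ahler manifold; since $\Ss^{6}$ itself carries the standard $\G_{2}$-induced nearly K\"ahler structure whose characteristic connection has parallel skew-torsion, both possibilities fit the conclusion. For $n=7$ the analogous classification by Friedrich--Kath--Moroianu--Semmelmann asserts that $(M^{7},g)$ must be either a nearly parallel $\G_{2}$-manifold, a (proper) Einstein--Sasakian manifold, or a 3-Sasakian manifold. The key elimination step uses the strict $\nabla^{c}$-Einstein condition from Proposition \ref{bravo}: by \cite[Rem.~2.26]{AFer} any almost contact metric structure that is $\nabla^{c}$-Einstein is in fact $\Ric^{c}$-flat, which contradicts $\Sca^{c}=12\gamma^{2}/7\neq 0$; this rules out the Einstein--Sasakian and 3-Sasakian alternatives (the latter carries a compatible Sasakian structure to which the same obstruction applies). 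The only surviving class is nearly parallel $\G_{2}$.

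The main obstacle will be case $n=7$, where one must be sure that every non-nearly-parallel-$\G_{2}$ member of the Killing spinor zoo indeed carries an almost contact structure whose characteristic connection coincides with (or is incompatible with) the one distinguished by $\varphi$; this is precisely where the strict $\nabla^{c}$-Einstein obstruction of \cite{AFer} does the work, so the argument is conceptually short but rests on quoting the right classification and the right non-existence result. The dimensions $n=3,6,7$ are exactly those in which the Killing number $\kappa=3\gamma/4n$ combines compatibly with $T\cdot\varphi=\gamma\varphi$ and $\nabla^{c}\varphi=0$ (cf.\ Examples surrounding Proposition \ref{klik}), which is why no other dimensions appear in the statement.
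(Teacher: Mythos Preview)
Your overall strategy matches the paper's: invoke Theorem \ref{general1} to turn the $\nabla^{c}$-parallel KsT into a real Killing spinor with $\kappa=3\gamma/4n$, then feed this into the known classification of compact manifolds with real Killing spinors (Friedrich's school, B\"ar), using Theorem \ref{s3} for the case $n=3$. This is exactly how the paper argues in the paragraphs immediately preceding the theorem.

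One point in your $n=7$ analysis is unnecessary. You set up a trichotomy ``nearly parallel $\G_2$ / Einstein--Sasakian / 3-Sasakian'' and then try to eliminate the latter two via the strict $\nabla^{c}$-Einstein obstruction from \cite{AFer}. But in dimension~7 these are not competing alternatives: by \cite{FKMS}, the existence of a real Killing spinor on a compact $7$-manifold is \emph{equivalent} to the existence of a nearly parallel $\G_2$-structure, and the Einstein--Sasakian and 3-Sasakian cases are precisely the type-2 and type-3 nearly parallel $\G_2$-manifolds (those with two, resp.\ three, Killing spinors). So once you have a Killing spinor, the conclusion ``$M^{7}$ is isometric to a nearly parallel $\G_2$-manifold'' is immediate; no elimination step is required. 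The $\nabla^{c}$-Einstein obstruction of \cite{AFer} that the paper discusses is relevant for ruling out Einstein--Sasakian structures \emph{with their Sasakian characteristic connection} in dimensions $5$ and $\geq 9$, where no $\G_2$ alternative exists --- not for dimension~7, where the theorem's conclusion is already secured by \cite{FKMS} alone. A minor side remark: the classifications you cite do not in fact need simple-connectedness for the implications used here (the equivalence ``Killing spinor $\Leftrightarrow$ nearly parallel $\G_2$'' in \cite{FKMS} and ``Killing spinor $\Rightarrow$ nearly K\"ahler'' in \cite{Gr} are local-to-global statements that hold for any compact spin manifold), so that qualifier can be dropped.
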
       
 It is an interesting question the existence of an  analogue of Theorem \ref{s3} for some even dimensional  sphere $\Ss^{2m}$ (different that $\Ss^{6}=\G_2/\SU_{3}$). For $\Ss^{4}$  this cannot be  the case due to \cite[Thm.~1.1]{Dalakov}.    
 
 \smallskip
 \smallskip
 \noindent{\bf A different construction.} \textnormal{  With the aim to avoid confusions, we recall that J.~Becker-Bender in her Phd thesis proved that Killing spinors with  torsion on  Einstein-Sasakian manifolds  exist \cite[Cor.~2.18]{Julia}; they appear after deforming  the metric associated to the real Killing spinors by applying the Tanno transformation, see also \cite[Ex.~5.1, 5.2]{ABK} and \cite[p.~21]{AHol}.  Let us shortly explain the {\it difference} of this construction with the present work.      For an almost contact manifold $(M^{2m+1}, g, \xi, \eta, \phi)$ the Tanno deformation \cite{Tanno}  is given by $g_{t}=tg+(t^{2}-t)\eta\otimes\eta$,  $\xi_{t}=\frac{1}{t}\xi $ and $\eta_{t}=t\eta$ for some $t>0$ (for details on Sasakian geometry  see \cite{Tanno, FrK, FKMS, Boy1, FrIv, Srni}).  If $(M^{2m+1}, g, \xi, \eta, \phi)$ is Sasakian, then $(M^{2m+1}, g_{t}, \xi_{t}, \eta_{t}, \phi)$ is too \cite{Tanno, FrKim}. Consider   the Tanno deformation of an Einstein-Sasakian manifold $(M^{2m+1}, g, \xi, \eta, \phi)$ with $2m+1\geq 5$.  In \cite[Thm.~2.22]{Julia} it was shown that  $(M^{2m+1}, g_{t}, \xi_{t}, \eta_{t}, \phi)$ admits  Killing spinors with torsion for the parameters $s_{t}=\frac{m+1}{4(m-1)}(\frac{1}{t}-1)$, with respect to the connection
\[
\nabla^{s_{t}}_{X}\varphi=\nabla^{g_{t}}_{X}\varphi+s_{t}(X\lrcorner T^{c})\cdot\varphi, \quad T^{c}:=\eta_{t}\wedge d\eta_{t}=2\eta_{t}\wedge F_{t},
\]
with Killing numbers $\zeta_{1, t}=\frac{\varepsilon}{2}(1-4s_{t})$ and  $\zeta_{2, t}=(-1)^{m+1}\zeta_{1, t}$,
 respectively.
 Here,   
 $\varepsilon=\pm 1$ is the number defined by the equation $e_{1}\cdot\phi(e_{1})\cdot\ldots\cdot e_{m}\cdot\phi(e_{m})\cdot\xi\cdot \varphi=\varepsilon i^{m+1}\varphi$ for a local orthonormal frame $\{e_{1}, \phi(e_{1}), \ldots, e_{m}, \phi(e_{m}), \xi\}$ of $M^{2m+1}$. If there is no deformation  $(t=1)$, then $s_{t}=0$ and $\varphi_{i}$  coincide with  the Riemannian Killng spinors that $(M^{2m+1}, g, \xi, \eta, \phi)$ carries, see  \cite[Thm.~1]{FrK}.  For $1-4s_{t}=0$, i.e. the Riemannian metric $g_{t_{o}}=g_{{m+1}/{2m}}$,  the spinor fields are $\nabla^{c}$-parallel,  see also \cite[p.~21]{AHol}. However, $\varphi_{i}$ are KsT for any Riemannian metric $g_{t}$ with $t>0, t\neq \frac{m+1}{2m}$.  This shows that the equations given in (\ref{cla}) hold with respect to different  Riemannian metrics. Maybe it is an interesting but difficult  task to describe new non-integrable $G$-structures  carrying   Killing spinors with torsion   after a compatible deformation of the given $G$-structure, if any.}

    \section{Further applications}\label{finals} 
     \subsection{A converse direction of Proposition \ref{bravo}} 
 In the proof of Proposition \ref{bravo} we proved  that a $\nabla^{c}$-parallel twistor spinor with torsion $\varphi\in\ke(P^{s}\big|_{\Sigma_{\gamma}})$ for some $\gamma\neq 0$ and $s\neq 1/4$, satisfies the equation
    \begin{equation}\label{check1}
     \sum_{i}T(X, e_{i})\cdot (e_{i}\lrcorner T)\cdot\varphi=-\frac{18\gamma^{2}}{n^{2}}X\cdot\varphi, \quad\forall  X\in\Gamma(TM). 
    \end{equation}
Next we will show that   (\ref{check1}) is still true if: 
 \begin{lemma}\label{observe}
   Let $(M^{n}, g, T)$ $(n>3)$ be a (compact) Riemannian manifold with $\nabla^{c}T=0$, carrying a    $\nabla^{c}$-parallel spinor field $0\neq\varphi\in\Sigma_{\gamma}$ for   some $0\neq \gamma\in\Spec(T)$, where $\nabla^{c}=\nabla^{g}+\frac{1}{2}T$  is the characteristic connection.
    Assume that $M^{n}$ is both Einstein and $\nabla^{c}$-Einstein with respect to $g$, in particular that  the relations (\ref{rig}) and (\ref{ricapl}) are satisfied for any  $X\in\Gamma(TM)$.   Then, any vector field  $X\in\Gamma(TM)$ satisfies   (\ref{check1}), as well. 
      \end{lemma}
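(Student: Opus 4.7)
The plan is to exploit the key identity (\ref{conv}) from the proof of Proposition \ref{bravo}, which relates the Riemannian and characteristic curvature tensors through the torsion:
\[
\sum_{i}e_{i}\cdot R^{g}(X, e_{i})\varphi=\sum_{i}e_{i}\cdot R^{c}(X, e_{i})\varphi-\frac{3}{8}(X\lrcorner \sigma_{T})\cdot\varphi+\frac{1}{8}\sum_{i}T(X, e_{i})\cdot (e_{i}\lrcorner T)\cdot\varphi.
\]
The idea is to evaluate both sides on the $\nabla^{c}$-parallel spinor $\varphi$, so that the term involving $\sum_{i}T(X, e_{i})\cdot (e_{i}\lrcorner T)\cdot\varphi$ becomes the only unknown, and then solve for it using the two Einstein hypotheses.

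First, since $\nabla^{c}\varphi=0$, the curvature $R^{c}(X, Y)\varphi$ vanishes identically, so the first term on the right is zero. On the left, I apply Lemma \ref{JUL1} for the parameter $s=0$, which gives $\sum_{i}e_{i}\cdot R^{g}(X, e_{i})\varphi = -\tfrac{1}{2}\Ric^{g}(X)\cdot\varphi$. Next, I rewrite $(X\lrcorner \sigma_{T})\cdot\varphi$ using Theorem \ref{KNOWN1}, which states that $\Ric^{c}(X)\cdot\varphi = (X\lrcorner \sigma_{T})\cdot\varphi$ for any $\nabla^{c}$-parallel spinor.

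After these substitutions, formula (\ref{conv}) reduces to
\[
-\tfrac{1}{2}\Ric^{g}(X)\cdot\varphi=-\tfrac{3}{8}\Ric^{c}(X)\cdot\varphi+\tfrac{1}{8}\sum_{i}T(X, e_{i})\cdot (e_{i}\lrcorner T)\cdot\varphi.
\]
Now I plug in the two Einstein conditions (\ref{rig}) and (\ref{ricapl}), namely $\Ric^{g}(X)\cdot\varphi=\tfrac{9(n-1)\gamma^{2}}{4n^{2}}X\cdot\varphi$ and $\Ric^{c}(X)\cdot\varphi=\tfrac{3(n-3)\gamma^{2}}{n^{2}}X\cdot\varphi$, and solve algebraically for the remaining term. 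A short arithmetic simplification of $\tfrac{9\gamma^{2}}{8n^{2}}[-(n-1)+(n-3)]=-\tfrac{18\gamma^{2}}{8n^{2}}$ yields exactly the identity (\ref{check1}).

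There is essentially no hard obstacle: the entire argument is a linear manipulation of identities already established in the paper, and the dimensional restriction $n>3$ is not used directly beyond ensuring that $\sigma_{T}$ does not vanish identically and that the statement (\ref{ricapl}) is non-trivial. The only thing to double-check is the consistency between Lemma \ref{JUL1} applied at $s=1/4$ (which would give $\sum_{i}e_{i}\cdot R^{c}(X, e_{i})\varphi=-\tfrac{1}{2}\Ric^{c}(X)\cdot\varphi+\tfrac{1}{2}(X\lrcorner\sigma_{T})\cdot\varphi$) and the vanishing of the $\nabla^{c}$-curvature on $\varphi$; this consistency indeed  recovers   the identity $\Ric^{c}(X)\cdot\varphi=(X\lrcorner\sigma_{T})\cdot\varphi$ of Theorem \ref{KNOWN1}, so the approach is internally coherent.
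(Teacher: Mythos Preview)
Your proof is correct and follows essentially the same approach as the paper: both arguments evaluate the identity (\ref{conv}) on the $\nabla^{c}$-parallel spinor $\varphi$, use $R^{c}(X,Y)\varphi=0$, the relation $\sum_{i}e_{i}\cdot R^{g}(X,e_{i})\varphi=-\tfrac{1}{2}\Ric^{g}(X)\cdot\varphi$, the identification $(X\lrcorner\sigma_{T})\cdot\varphi=\Ric^{c}(X)\cdot\varphi$ from Theorem \ref{KNOWN1}, and then substitute the Einstein constants (\ref{rig}) and (\ref{ricapl}) to solve for the remaining term. Your version is simply more explicit about the arithmetic and adds a consistency check at the end, but the logical structure is identical.
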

 \begin{proof}
Let $\{e_{1}, \ldots, e_{n}\}$ be an orthonormal frame of $M^{n}$ with respect to $g$. Under our assumptions the following relations are true for some $\gamma\neq 0$:  $\sum_{i}e_{i}\cdot R^{g}(X, e_{i})\varphi=-\frac{1}{2}\Ric^{g}(X)\cdot\varphi=-\frac{9(n-1)\gamma^{2}}{8n^{2}}X\cdot\varphi$ and   $(X\lrcorner \sigma_{T})\cdot\varphi= \Ric^{c}(X)\cdot\varphi =\frac{3(n-3)\gamma^{2}}{n^{2}}X\cdot\varphi$. Then, equation  (\ref{check1}) it is a simple consequence of    (\ref{conv}) and the $\nabla^{c}$-parallelism of $\varphi$. %
 \end{proof}
\textnormal{Under the assumptions of Lemma \ref{observe} and for general $n$, we are not able to show  that equation (\ref{check1}) implies the twistor equation.  However, this is possible  for $n=6$ and a nearly K\"ahler manifold, or $n=7$ and a  nearly parallel $\G_2$-manifold, in combination with the relations  (\ref{wrong}) and (\ref{xtg2}), respectively.  Notice that although in our text  these formulas  appear as a consequence of  the twistor equation with torsion, both are deeper consequences of the  $\nabla^{c}$-parallelism  of the spinors $\varphi^{\pm}$  (respectively $\varphi_{0}$) and the character of the spin representation  in these dimensions, see for example \cite[Lem.~2.2, 2.3]{ACFH}. Further applications of the relations  (\ref{wrong}) and (\ref{xtg2}) for these two kinds of weak holonomy structures, will be shortly described in an appendix.}

\smallskip
  In \cite{ABK}, a full integrability condition for the existence of Killing spinors with torsion with respect to the family $\nabla^{s}$ for $s=\frac{n-1}{4(n-3)}$ was presented. For convenience, let us  recall the statement.
\begin{theorem} \textnormal{(\cite[Thm.~A.2, p.~324]{ABK})}\label{integrab} 
Let  $\varphi$ be a KsT with respect to $\nabla^{s}$ for $s=\frac{n-1}{4(n-3)}$ with Killing number $\zeta$. Set $\lambda:=1/2(n-3)$. Then 
\begin{eqnarray}
\Ric^{c}(X)\cdot\varphi&=&-16s\zeta(X\lrcorner T)\cdot\varphi+4(n-1)\zeta^{2}X\cdot\varphi+(1-12\lambda^{2})(X\lrcorner \sigma_{T})\cdot\varphi\nonumber\\
&&-2(2\lambda^{2}+\lambda)  \sum_{i}T(X, e_{i})\cdot (e_{i}\lrcorner T)\cdot\varphi. \label{mala}
\end{eqnarray}
\end{theorem}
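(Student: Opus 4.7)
\smallskip
\noindent\textbf{Proof proposal.} The strategy is to compute the spinorial curvature $R^{s}(X, Y)\varphi$ (or rather its Clifford contraction $\sum_{i}e_{i}\cdot R^{s}(X, e_{i})\varphi$) in two independent ways and then equate the results. On one side one uses the KsT equation to expand $R^{s}$ algebraically; on the other side one invokes a spinorial Ricci identity to convert $\sum_{i}e_{i}\cdot R^{s}(X, e_{i})\varphi$ into a combination of $\Ric^{c}(X)\cdot\varphi$ and torsion terms. Throughout, we set $\lambda:=(4s-1)/4$, which for the theorem's value $s=\frac{n-1}{4(n-3)}$ indeed specializes to $\lambda=\frac{1}{2(n-3)}$.

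\smallskip
\noindent\emph{Step 1 (algebraic side).} From $\nabla^{s}_{X}\varphi=\zeta X\cdot\varphi$ one computes
\[
\nabla^{s}_{X}\nabla^{s}_{Y}\varphi=\zeta(\nabla^{s}_{X}Y)\cdot\varphi+\zeta^{2} Y\cdot X\cdot\varphi,
\]
and subtracting the analogous expression with $X,Y$ interchanged gives
\[
R^{s}(X, Y)\varphi=\zeta\, T^{s}(X, Y)\cdot\varphi+\zeta^{2}(Y\cdot X-X\cdot Y)\cdot\varphi=4s\zeta\, T(X, Y)\cdot\varphi+\zeta^{2}(Y\cdot X-X\cdot Y)\cdot\varphi.
\]
Clifford-multiplying by $e_{i}$ and summing, and using the identities $\sum_{i}e_{i}\cdot T(X, e_{i})=2(X\lrcorner T)$ together with $\sum_{i}e_{i}\cdot(e_{i}\cdot X-X\cdot e_{i})\cdot\varphi=-2(n-1)X\cdot\varphi$ (the latter from $\sum_{i}e_{i}\cdot e_{i}=-n$ and $\sum_{i}e_{i}\cdot X\cdot e_{i}=(n-2)X$), one arrives at
\[
\sum_{i}e_{i}\cdot R^{s}(X, e_{i})\varphi=8s\zeta\,(X\lrcorner T)\cdot\varphi-2(n-1)\zeta^{2}X\cdot\varphi.
\]

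\smallskip
\noindent\emph{Step 2 (curvature-identity side).} The excerpt already provides, in equation (\ref{conv}) and the calculation immediately preceding it, the general comparison formula
\[
\sum_{i}e_{i}\cdot R^{s}(X, e_{i})\varphi=\sum_{i}e_{i}\cdot R^{c}(X, e_{i})\varphi-6\lambda^{2}(X\lrcorner\sigma_{T})\cdot\varphi-(2\lambda^{2}+\lambda)\sum_{i}T(X, e_{i})\cdot(e_{i}\lrcorner T)\cdot\varphi,
\]
valid for any $\lambda=(4s-1)/4$, and hence in particular for our $s$. Applying Lemma \ref{JUL1} with $s=1/4$ (so that $s(3-4s)=1/2$) converts the first term on the right into $-\tfrac{1}{2}\Ric^{c}(X)\cdot\varphi+\tfrac{1}{2}(X\lrcorner\sigma_{T})\cdot\varphi$. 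Collecting the $\sigma_{T}$-contributions yields
\[
\sum_{i}e_{i}\cdot R^{s}(X, e_{i})\varphi=-\tfrac{1}{2}\Ric^{c}(X)\cdot\varphi+\tfrac{1}{2}(1-12\lambda^{2})(X\lrcorner\sigma_{T})\cdot\varphi-(2\lambda^{2}+\lambda)\sum_{i}T(X, e_{i})\cdot(e_{i}\lrcorner T)\cdot\varphi.
\]

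\smallskip
\noindent\emph{Step 3 (combine).} Equating the two expressions for $\sum_{i}e_{i}\cdot R^{s}(X, e_{i})\varphi$ obtained above and multiplying by $-2$ yields exactly (\ref{mala}). The main obstacle — and the reason one must be careful — lies in Step 2: the $\sigma_{T}$-coefficient $(1-12\lambda^{2})$ arises only after a delicate cancellation between the $\tfrac{1}{2}$ coming from $s(3-4s)|_{s=1/4}$ in Lemma \ref{JUL1} and the $-6\lambda^{2}$ coming from the comparison between $R^{s}$ and $R^{c}$. Verifying the $R^{s}$-to-$R^{c}$ comparison identity itself (the genuine work hidden behind Step 2) requires expanding $\nabla^{s}=\nabla^{c}+(\lambda-\tfrac{1}{4}+\tfrac{1}{4})(X\lrcorner T)=\nabla^{c}+\lambda(X\lrcorner T)$ on the spinor bundle, using $\nabla^{c}T=0$, and carefully tracking the Clifford products $(X\lrcorner T)\cdot(Y\lrcorner T)$ against $\sigma_{T}$ and $T(X, e_{i})\cdot(e_{i}\lrcorner T)$; this is the only step where tedious Clifford algebra is genuinely unavoidable.
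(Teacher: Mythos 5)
Your proposal is correct, and the computation checks out line by line: equating the algebraic expansion of $\sum_{i}e_{i}\cdot R^{s}(X,e_{i})\varphi$ obtained from the KsT equation with the $R^{s}$-to-$R^{c}$ comparison formula plus Lemma \ref{JUL1} at $s=1/4$ reproduces (\ref{mala}) exactly, including all coefficients. Note that the paper itself gives no proof of this statement — it is quoted from \cite[Thm.~A.2]{ABK} — but your two-sided curvature computation is precisely the argument underlying that reference, whose key comparison identity the paper reproduces verbatim in the proof of Proposition \ref{bravo}; your only un-expanded step is the verification of that comparison identity, which you correctly flag as the genuinely laborious Clifford-algebra part.
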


For a Riemannian  manifold $(M^{n}, g, T)$ with $\nabla^{c}T=0$ one  can use  Theorem \ref{general1} and Proposition \ref{bravo}  to prove that if $\varphi\in\cal{K}^{s}(M, g)_{\zeta}$ is a KsT for $s=\frac{n-1}{4(n-3)}\neq 0, 1/4$  with Killing number  $\zeta=3\gamma(1-4s)/4n$, which  is in addition   $\nabla^{c}$-parallel  w.r.t.  the characteristic connection $\nabla^{c}=\nabla^{g}+\frac{1}{2}T$,   then (\ref{mala}) reduces to an identity, namely the twistor equation with torsion. In fact, for such $\nabla^{c}$-parallel KsT w.r.t.  $\nabla^{s}=\nabla^{g}+2sT$, the optimal integrability conditions are these described by   Theorem \ref{bravo}.
 \begin{corol}\label{little}
 Let $(M^{n}, g, T)$  $(n>3)$ be a compact connected Riemannian manifold  with $\nabla^{c}T=0$,  where $\nabla^{c}=\nabla^{g}+\frac{1}{2}T$ is the characteristic connection. Consider a $\nabla^{c}$-parallel Killing spinor with torsion  $\varphi_{0}\in\cal{K}^{s}(M^{n}, g)_{\zeta}$ for  $s=\frac{n-1}{4(n-3)}\neq 0, 1/4$,  with Killing number $\zeta=3\gamma(1-4s)/4n$ for some $0\neq \gamma\in\Spec(T)$.  Then,   $\varphi_{0}$ satisfies the condition  (\ref{mala})  as an identity.
 \end{corol}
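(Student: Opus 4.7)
The plan is to verify the identity by substituting the explicit expressions furnished by Proposition \ref{bravo}, relations (\ref{t1s}) and (\ref{check1}), and the compatibility between the parameters $s=\frac{n-1}{4(n-3)}$ and $\lambda=\frac{1}{2(n-3)}$, into both sides of (\ref{mala}) and then performing a purely algebraic reduction. The key observation is that every term appearing in (\ref{mala}) acts on our special spinor $\varphi_0$ as a scalar multiple of $X\cdot\varphi_0$, so the whole statement collapses to a numerical identity in $\gamma,n,s,\lambda$.

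Concretely, I would first note that our choice of $s$ yields $1-4s=\frac{(n-3)-(n-1)}{n-3}=-\frac{2}{n-3}=-4\lambda$, and hence the Killing number becomes $\zeta=\frac{3(1-4s)\gamma}{4n}=-\frac{3\gamma\lambda}{n}$. Since $\varphi_0$ is $\nabla^c$-parallel and belongs to $\Sigma_\gamma$, Theorem \ref{general1} ensures $\varphi_0\in\ke(P^s\big|_{\Sigma_\gamma})$, so by (\ref{t1s})
\[
(X\lrcorner T)\cdot\varphi_0=-\tfrac{3\gamma}{n}X\cdot\varphi_0.
\]
Proposition \ref{bravo}(b) gives $\Ric^c(X)\cdot\varphi_0=\frac{3(n-3)\gamma^2}{n^2}X\cdot\varphi_0$, while $(X\lrcorner\sigma_T)\cdot\varphi_0$ equals the same expression by Theorem \ref{KNOWN1}. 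Finally, from the proof of Proposition \ref{bravo} we have the identity (\ref{check1}), namely $\sum_i T(X,e_i)\cdot(e_i\lrcorner T)\cdot\varphi_0=-\frac{18\gamma^2}{n^2}X\cdot\varphi_0$.

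Plugging these four expressions into the right-hand side of (\ref{mala}) and factoring out $\frac{\gamma^2}{n^2}X\cdot\varphi_0$, the equation reduces to the numerical statement
\[
3(n-3)=-144\,s\lambda+36(n-1)\lambda^2+3(n-3)(1-12\lambda^2)+2(2\lambda^2+\lambda)\cdot 18.
\]
Using $s\lambda=\tfrac{1}{2}(n-1)\lambda^2$ (a direct consequence of $s=\tfrac{(n-1)\lambda}{2}$) one finds that the first two terms combine to $-36(n-1)\lambda^2$, and the coefficient of $\lambda^2$ in the whole right-hand side simplifies to $-72(n-3)\lambda^2$. Since $\lambda(n-3)=\tfrac12$, this last contribution cancels exactly the $36\lambda$ term, leaving only $3(n-3)$, which matches the left-hand side.

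The proof is essentially a bookkeeping exercise; the only mildly non-obvious step is recognising the relation $1-4s=-4\lambda$ and the consequent identity $\lambda(n-3)=\tfrac12$, which makes the $\lambda$ and $\lambda^2$ contributions collapse. No separate obstacle arises because all the a priori curvature quantities appearing in (\ref{mala}) have already been computed in closed form for $\nabla^c$-parallel Killing spinors with torsion in Proposition \ref{bravo}, so the statement is reduced to the finite algebraic verification sketched above.
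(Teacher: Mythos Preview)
Your proof is correct and follows essentially the same route as the paper: both arguments invoke Theorem \ref{general1} and Proposition \ref{bravo} to replace $\Ric^{c}(X)\cdot\varphi_{0}$, $(X\lrcorner\sigma_{T})\cdot\varphi_{0}$ and $\sum_{i}T(X,e_{i})\cdot(e_{i}\lrcorner T)\cdot\varphi_{0}$ by explicit multiples of $X\cdot\varphi_{0}$ via (\ref{ricapl}) and (\ref{check1}), and then reduce (\ref{mala}) to an algebraic identity in $n,\gamma,\lambda$. The only cosmetic difference is that the paper substitutes everything \emph{except} (\ref{t1s}) and shows that (\ref{mala}) collapses precisely to the twistor equation $(X\lrcorner T)\cdot\varphi_{0}+\tfrac{3\gamma}{n}X\cdot\varphi_{0}=0$, which then holds; you instead feed in (\ref{t1s}) from the outset and verify the resulting numerical identity directly.
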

 \begin{proof}
  Consider a spinor field $\varphi_{0}\in\Gamma(\Sigma)$ which  is $\nabla^{c}$-parallel and the same time a Killing spinor with torsion for $s=\frac{n-1}{4(n-3)}\neq 0, 1/4$. By assumption,  the Killing number $\zeta$ is given by $\zeta=\frac{3(1-4s)\gamma}{4n}$, i.e. $\zeta=-\frac{3\gamma}{2n(n-3)}$ for some $T$-eigenvalue $\gamma\neq 0$.  Automatically, $\varphi$ is an element in the kernel of the twistor operator $P^{s}$ for some $s\neq 1/4$, thus  according to Proposition \ref{bravo}, the relations  (\ref{rig}) and (\ref{ricapl}) are satisfied for any  $X\in\Gamma(TM)$.  Moreover, by  the proof of the same proposition (or  Lemma \ref{observe}, since the condition $\varphi\in\Sigma_{\gamma}$ follows by the twistor equation)   we   know  that also relation (\ref{check1}) needs to hold.
Combining   this information, is  a straightforward    computation to show that the integrability condition given by (\ref{mala})    reduces to $(X\lrcorner T)\cdot\varphi_{0}+\frac{3\gamma}{n}X\cdot\varphi_{0}=0$,  $\forall X\in\Gamma(TM)$.  \end{proof}

\subsection{The relation between the two estimates and the role of Proposition \ref{klik}}    
          For the first eigenvalue of  $\slashed{D}^{2}$ restricted on $\Sigma_{\gamma}$ there is a second estimate, the so-called {\it twistorial estimate}   introduced in  \cite{ABK, Julia}. This is  defined by \begin{equation}\label{twestim}
\lambda_{1}(\slashed{D}^{2}|_{\Sigma_{\gamma}})\geq \frac{n}{4(n-1)}\Sca^{g}_{\rm min}+\frac{n(n-5)}{8(n-3)^{2}}\|T\|^{2}+\frac{n(4-n)}{4(n-3)^{2}}\gamma^{2}:=\be_{\rm tw}(\gamma),
\end{equation}
and the equality case appears  if and only if $\varphi$ is a twistor spinor with torsion for $s=(n-1)/4(n-3)$ and $\Sca^{g}=\text{constant}$.  A similar expression with (\ref{twestim}) holds for the whole spinor bundle.
In  the presence of a $\nabla^{c}$-parallel spinor  $0\neq \varphi\in\Sigma_{\gamma}$, the inequality $\be_{\rm tw}(\gamma)\leq\be_{\rm univ}(\gamma)$ needs to holds.  For  $n\leq 8$, this fact implies the inequalities \cite[Lem.~4.1]{ABK}
\begin{equation}\label{inqilkas}
0\leq 2n\|T\|^{2}+(n-9)\gamma^{2}, \quad\quad \Sca^{g}\leq \frac{9(n-1)}{2(9-n)}\|T\|^{2}.
\end{equation}
 The equality case here, takes place if and only if  the universal   estimate   coincides with the twistorial estimate. However, if one of these inequalities holds as an equality, then  Proposition  \ref{klik}  states that $\varphi$  must be  a real Killing spinor. In fact, one can  say more:     
\begin{prop}\label{afterklik}
  Let $(M^{n}, g, \nabla^{c})$ $(3< n\leq 8)$ be a compact  connected Riemannian spin manifold with $\nabla^{c}T=0$ and positive scalar curvature, carrying a spinor field $\varphi\in\Gamma(\Sigma)$   satisfying the equations given by (\ref{fund1}) for some $0\neq\gamma\in\Spec(T)$.
   If  $\be_{\rm tw}(\gamma)=\be_{\rm univ}(\gamma)$ then $\varphi$ is a real Killing spinor with respect to $g$, with Killing number $\kappa=3\gamma/4n$.  Conversely, if  $\varphi$ is a real Killing spinor with $\kappa=3\gamma/4n$ satisfying $(\ref{fund1})$, then     $\be_{\rm tw}(\gamma)=\be_{\rm univ}(\gamma)$ identically. 
\end{prop}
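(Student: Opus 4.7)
The plan is to reduce the equality $\beta_{\rm tw}(\gamma)=\beta_{\rm univ}(\gamma)$ to the arithmetic condition $(\dagger)$ of Proposition \ref{klik} and then quote that proposition. Since $\varphi\in\Sigma_{\gamma}$ is $\nabla^{c}$-parallel, Theorem \ref{KNOWN1} gives the explicit relation $\Sca^{g}=2\gamma^{2}-\frac{1}{2}\|T\|^{2}$; in particular $\Sca^{g}$ is constant, so $\Sca^{g}_{\rm min}=\Sca^{g}$. I would plug this into the two explicit expressions for $\beta_{\rm univ}(\gamma)$ and $\beta_{\rm tw}(\gamma)$ and compute the difference in the variables $\gamma^{2}$ and $\|T\|^{2}$ alone.

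Concretely, I would first note that
\[
\beta_{\rm univ}(\gamma)-\beta_{\rm tw}(\gamma)=-\frac{\Sca^{g}}{4(n-1)}+\frac{9-n}{8(n-3)^{2}}\|T\|^{2}+\frac{2n-9}{4(n-3)^{2}}\gamma^{2},
\]
after collecting coefficients of $\Sca^{g}$, $\|T\|^{2}$ and $\gamma^{2}$ in the definitions of $\beta_{\rm univ}$ and $\beta_{\rm tw}$. Substituting $\Sca^{g}=2\gamma^{2}-\frac{1}{2}\|T\|^{2}$ and clearing denominators by multiplying with $2(n-3)^{2}$, the identity $\beta_{\rm univ}(\gamma)=\beta_{\rm tw}(\gamma)$ becomes
\[
\gamma^{2}\bigl[4(n-3)^{2}-2(2n-9)(n-1)\bigr]=\|T\|^{2}\bigl[(n-3)^{2}+(9-n)(n-1)\bigr].
\]
A short calculation shows that the bracket on the left equals $2(9-n)$ and the bracket on the right equals $4n$. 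Hence, under the assumption $3<n\leq 8$ (so that $9-n>0$), the equality of the two estimates is equivalent to $\gamma^{2}=\frac{2n}{9-n}\|T\|^{2}$, which is one of the three equivalent forms of condition $(\dagger)$ appearing in Proposition \ref{klik}.

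At this point the first direction is immediate: assuming the spinor $\varphi$ satisfies (\ref{fund1}), Proposition \ref{klik} applies and guarantees that $\varphi$ is a real Killing spinor with respect to $g$ with Killing number $\kappa=3\gamma/4n$. Conversely, if $\varphi$ is a real Killing spinor with $\kappa=3\gamma/4n$ and still satisfies (\ref{fund1}), then the very same Proposition \ref{klik} forces $(\dagger)$ to hold, i.e.\ $\gamma^{2}=\frac{2n}{9-n}\|T\|^{2}$, and reversing the algebraic computation above yields $\beta_{\rm tw}(\gamma)=\beta_{\rm univ}(\gamma)$.

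No step is really an obstacle; the proof is essentially a piece of bookkeeping provided one first exploits $\nabla^{c}\varphi=0$ with $\varphi\in\Sigma_{\gamma}$ to eliminate $\Sca^{g}$ via Theorem \ref{KNOWN1}. The only mild subtlety is keeping track of the sign of $9-n$ (whence the restriction $n\leq 8$) so that the equivalent forms of $(\dagger)$ in Proposition \ref{klik} are available and the conclusion $\kappa=3\gamma/4n$ (rather than its opposite) follows from the sign conventions used there.
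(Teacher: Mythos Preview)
Your argument is correct and follows essentially the same route as the paper: reduce the equation $\beta_{\rm tw}(\gamma)=\beta_{\rm univ}(\gamma)$ to the arithmetic condition $(\dagger)$ of Proposition~\ref{klik} (in its equivalent form $\gamma^{2}=\frac{2n}{9-n}\|T\|^{2}$ for $n\leq 8$) and then invoke that proposition in both directions. Your version is in fact slightly more self-contained, since you compute the difference $\beta_{\rm univ}(\gamma)-\beta_{\rm tw}(\gamma)$ directly instead of quoting \cite[Lem.~4.1]{ABK}, and for the converse you simply reverse the algebra rather than appealing to Theorem~\ref{general1}; the only cosmetic slip is that the common factor you clear is $8(n-1)(n-3)^{2}$, not $2(n-3)^{2}$, though the displayed brackets and the conclusion are correct as written.
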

\begin{proof}
Since  $\be_{\rm tw}(\gamma)=\be_{\rm univ}(\gamma)$ if and only if one of the above inequalities holds as equality, the one  direction is obvious due to  Proposition  \ref{klik}, which is their equality case.  In fact,  under our assumptions,  the scalar curvature $\Sca^{g}$ is constant (see Theorem  \ref{KNOWN1}) and  the universal estimate becomes sharp.  Since $\slashed{D}=D^{c}-\frac{1}{2}T$, the spinor $\varphi$ is an eigenspinor of the cubic Dirac operator    with eigenvalue $-\frac{1}{2}\gamma$; therefore $\lambda_{1}=\frac{1}{4}\gamma^{2}=\be_{\rm univ}(\gamma)$. Since the two estimates coincide, we conclude that  $\be_{\rm tw}(\gamma)=\frac{1}{4}\gamma^{2}$ also,   where    $\be_{\rm tw}(\gamma)$ is given now  by the equality case in (\ref{twestim}) and without the minimal condition in $\Sca^{g}$.  Hence, for this direction one can  also  present another proof based on Theorem \ref{general1}.  According to \cite{ABK}, if the two estimates coincide (and   both hold with the equality),  then the   spinor $\varphi\in\Sigma_{\gamma}$ must be $\nabla^{c}$-parallel and also a   twistor spinor with torsion for $s=(n-1)/4(n-3)$.  Then, by Theorem \ref{general1},  $\varphi$  is also a Killing spinor with torsion   with Killling number $\zeta=\frac{3(1-4s)\gamma}{4n}=-\frac{3\gamma}{2n(n-3)}$ and moreover a  real Killing spinor with Killing number $\kappa=3\gamma/4n$. 

We  present  a  proof for the converse direction. If $\varphi$ is a real Killing spinor on $(M^{n}, g,  T)$ with $\kappa=3\gamma/4n$, then $\Sca^{g}=\frac{9(n-1)}{4n}\gamma^{2}$ is constant; because $\varphi\in\Sigma_{\gamma}$ is $\nabla^{c}$-parallel and   the scalar curvature satisfies the desired formula, by Proposition \ref{klik} for $n\leq 8$ we also have $\|T\|^{2}=\frac{2(9-n)}{9(n-1)}\Sca^{g}$.  Moreover, Theorem \ref{general1} tell us that this is also a twistor spinor for some $s\neq  1/4$; thus one may assume without loss of generality  that  $s=(n-1)/4(n-3)\neq 1/4$. Hence, finally   the twistorial estimate must hold as an equality and  replacing the previous values,   we obtain the expression
\[
\be_{\rm tw}(\gamma)=\frac{n\Big[9(n-3)^{2}+(n-5)(9-n)+4n(4-n)\Big]}{36(n-1)(n-3)^{2}}\Sca^{g}.
\]
Equivalent expressions in terms only of $\|T\|^{2}$ or   $\gamma^{2}$, can be easily deduced. The same time,   $\varphi\in\Sigma_{\gamma}$ is $\nabla^{c}$-parallel, hence  for the universal estimate we get $\be_{\rm univ}(\gamma)=\frac{\gamma^{2}}{4}=\frac{n}{9(n-1)}\Sca^{g}$. Then,  one can easily check that the equation $\be_{\rm tw}(\gamma)-\be_{\rm univ}(\gamma)=0$  holds as an identity.  \end{proof}
\begin{example}
\textnormal{For $n=7$ and for a nearly parallel $\G_2$-manifold $(M^{7}, g, \omega)$ we compute  $\be_{\rm tw}(\gamma)=\frac{7}{54}\Sca^{g}=\be_{\rm univ}(\gamma)$.  For $n=6$ and a nearly K\"ahler manifold $(M^{6}, g, J)$ we have $\be_{\rm tw}(\gamma)=\frac{2}{15}\Sca^{g}=\be_{\rm univ}(\gamma)$, see also \cite[Ex.~6.1]{ABK}. }  
\end{example}

 \appendix
 \section{The endomorphism $\sigma_{T}$ on the spinor bundle $\Sigma$}
 
 Relations (\ref{wrong}) and (\ref{xtg2}) have another important consequence,  related with the eigenspinors   of the endomorphism defined by the 4-form $
\sigma_{T}$ (or equivalently $dT$) in the Clifford algebra.   In particular, the results that  we describe below  are known (see \cite[Lem.~10.7]{FrIv}, \cite[Thm.~1.1]{AFNP}),  but our proofs are different.

\begin{prop}\textnormal{(\cite[Lem.~10.7]{FrIv}, \cite[Thm.~1.1]{AFNP})} \label{stat}
On a nearly K\"ahler manifold $(M^{6}, g, J)$ the spinor fields $\varphi^{\pm}\in\Sigma_{\pm 2\|T\|}$ are eigenspinors of the endomorphism defined by the 4-form $
\sigma_{T}$. In particular,
 \[
 \sigma_{T}\cdot\varphi^{\pm} =-\frac{\Sca^{c}}{4}\cdot\varphi^{\pm}=-\frac{3}{2}\|T\|^{2}\cdot\varphi^{\pm}=-3\tau_{0}\cdot\varphi^{\pm}.
\]
Similarly,  on a  proper nearly parallel $\G_2$-manifold  the spinor field  $\varphi_{0}\in\Sigma_{-\sqrt{7}\|T\|}$ is eigenspinor  of the endomorphism defined by the 4-form $
\sigma_{T}$. In particular,
\[
\sigma_{T}\cdot\varphi_{0} =-\frac{\Sca^{c}}{4}\cdot\varphi_{0}=-3\|T\|^{2}\cdot\varphi_{0}=-\frac{7}{12}\tau_{0}^{2}\cdot\varphi_{0}.
\]
\end{prop}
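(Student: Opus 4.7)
My plan is to exploit the Clifford algebra identity $2\sigma_T = \|T\|^2 - T^2$, which is stated in the Preliminaries and holds inside $\Cl(M)$. Since we are assuming $\nabla^{c}T = 0$ throughout, both $T$ and $\sigma_T$ act on $\Sigma$ as symmetric endomorphisms with constant coefficients, and the spinor bundle splits as $\Sigma = \bigoplus_{\gamma}\Sigma_\gamma$. For any spinor $\varphi \in \Sigma_\gamma$ one then has $T^{2}\cdot\varphi = \gamma^{2}\varphi$, so the identity immediately yields
\[
\sigma_T\cdot\varphi \;=\; \tfrac{1}{2}\bigl(\|T\|^{2}-\gamma^{2}\bigr)\,\varphi.
\]
This reduces the proposition to a matter of plugging in the correct $T$-eigenvalue and comparing with the value of $\Sca^{c}$.

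For the nearly K\"ahler case, $\varphi^{\pm}\in\Sigma_{\pm 2\|T\|}$, so $\gamma^{2}=4\|T\|^{2}$ and the formula gives $\sigma_T\cdot\varphi^{\pm}=-\tfrac{3}{2}\|T\|^{2}\varphi^{\pm}$. The Preliminaries record $\Sca^{c}=12\tau_{0}$ and $\|T\|^{2}=2\tau_{0}$, hence $-\Sca^{c}/4 = -3\tau_{0} = -\tfrac{3}{2}\|T\|^{2}$, giving the stated equality. For the nearly parallel $\G_2$ case, $\varphi_{0}\in\Sigma_{-\sqrt{7}\|T\|}$, so $\gamma^{2}=7\|T\|^{2}$ and the formula gives $\sigma_T\cdot\varphi_{0} = -3\|T\|^{2}\varphi_{0}$; using $\Sca^{c}=\tfrac{7}{3}\tau_{0}^{2}$ and $\|T\|^{2}=\tfrac{7}{36}\tau_{0}^{2}$, one checks $-\Sca^{c}/4 = -\tfrac{7}{12}\tau_{0}^{2} = -3\|T\|^{2}$, as claimed.

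As a sanity check, the same answer drops out of Theorem \ref{KNOWN1}: the integrability condition $\Sca^{c}\varphi_{0} = -4\sigma_T\cdot\varphi_{0}$ for any $\nabla^{c}$-parallel spinor gives $\sigma_T\cdot\varphi_{0} = -\tfrac{\Sca^{c}}{4}\varphi_{0}$ immediately, while the refined formula $\Sca^{c}=2(\gamma^{2}-\|T\|^{2})$ (valid when $\varphi_{0}\in\Sigma_\gamma$) reproduces $-\Sca^{c}/4 = \tfrac{1}{2}(\|T\|^{2}-\gamma^{2})$. The two routes agree, which is expected since the Clifford identity $2\sigma_T = \|T\|^{2}-T^{2}$ is precisely the algebraic content of that integrability condition when evaluated on a $T$-eigenspinor.

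There is no real obstacle; the only subtlety is making sure the proof is genuinely different from the original derivations in \cite{FrIv, AFNP}, which typically proceed by computing $dT\cdot\varphi$ via the curvature identities for $\nabla^{c}$. Here the argument is purely algebraic in the Clifford bundle once one knows the $T$-eigenvalue, and the hypothesis $\nabla^{c}\varphi = 0$ enters only to ensure that $\varphi$ belongs to a single $T$-eigenspace $\Sigma_\gamma$ (via Theorem \ref{KNOWN1}); beyond that, the computation of $\sigma_T\cdot\varphi$ bypasses any use of the nearly K\"ahler or nearly parallel $\G_2$ torsion relations (\ref{wrong}) and (\ref{xtg2}).
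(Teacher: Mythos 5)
Your proof is correct, but it follows a different route from the one the paper gives. The paper's proof is deliberately ``direct'': it starts from the pointwise relations $(e_{i}\lrcorner T)\cdot\varphi^{\pm}=\mp\|T\|\,e_{i}\cdot\varphi^{\pm}$ and $(e_{i}\lrcorner T)\cdot\varphi_{0}=\frac{3\|T\|}{\sqrt{7}}e_{i}\cdot\varphi_{0}$ (relations (\ref{wrong}) and (\ref{xtg2}), which the paper extracts from the twistor equation for these two $G$-structures), feeds them into the identity $(2\sigma_{T}-3\|T\|^{2})\cdot\varphi=\sum_{i}(e_{i}\lrcorner T)\cdot(e_{i}\lrcorner T)\cdot\varphi$, and finishes with $\sum_{i}(e_{i}\lrcorner T)\cdot e_{i}=3T$ and the known $T$-eigenvalue. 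You instead invoke the global Clifford identity $2\sigma_{T}=\|T\|^{2}-T^{2}$ and apply $T\cdot\varphi=\gamma\varphi$ twice, which gives $\sigma_{T}\cdot\varphi=\tfrac{1}{2}(\|T\|^{2}-\gamma^{2})\varphi$ for \emph{any} $T$-eigenspinor; the rest is arithmetic with the tabulated values of $\gamma$, $\|T\|^{2}$ and $\Sca^{c}$, all of which you carry out correctly. Your argument is shorter and more general (it never uses $\nabla^{c}\varphi=0$ beyond the eigenspace membership, which is already in the hypothesis), and the paper itself acknowledges this algebraic shortcut in the remark following the proposition. What the paper's longer route buys is precisely the point of its appendix: it exhibits the eigenvalue of $\sigma_{T}$ as a \emph{consequence} of the structure-specific relations (\ref{wrong}) and (\ref{xtg2}), thereby giving an independent consistency check on those relations rather than on the spectrum of $T$ alone. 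One small caveat on your closing remark: the equality $\be_{\rm tw}=\be_{\rm univ}$ has nothing to do with it, but your claim that $2\sigma_{T}=\|T\|^{2}-T^{2}$ evaluated on a $T$-eigenspinor is ``precisely the algebraic content'' of the integrability condition $\Sca^{c}\varphi_{0}=-4\sigma_{T}\cdot\varphi_{0}$ is only true once one also knows $\Sca^{c}=2(\gamma^{2}-\|T\|^{2})$; the integrability condition itself carries the extra geometric input that fixes $\Sca^{c}$, so the two statements are not literally equivalent.
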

 \begin{proof}
  We present a direct proof based on (\ref{wrong}) and (\ref{xtg2}), respectively.  Consider a local orthonormal frame $\{e_{i}\}$.  In the nearly K\"ahler case $(M^{6}, g, J)$ and due to (\ref{wrong}), it holds that
 \[
 (e_{i}\lrcorner T)\cdot \varphi^{\pm}=\mp \|T\|e_{i}\cdot \varphi^{\pm}, \quad \forall i\in\{1, \ldots, 6\}.
  \]
 Then, because $(2\sigma_{T}-3\|T\|^{2})\cdot\varphi^{\pm}=\sum_{i}(e_{i}\lrcorner T)\cdot (e_{i}\lrcorner T)\cdot\varphi^{\pm}$  (see Appendix C in \cite{ABK}), we immediately get
 \begin{eqnarray*}
 (2\sigma_{T}-3\|T\|^{2})\cdot\varphi^{\pm} =\mp\|T\|\sum_{i}(e_{i}\lrcorner T)\cdot e_{i}\cdot \varphi^{\pm}  &=&\mp 3\|T\| T\cdot\varphi^{\pm}\\
 &=&\mp 3\|T\| \Big(\pm 2\|T\|\Big)\cdot\varphi^{\pm}=-6\|T\|^{2}\cdot\varphi^{\pm}.
  \end{eqnarray*}
  Similarly, for a 7-dimensional  (proper) nearly parallel $\G_2$-manifold $(M^{7}, g, \omega)$ we easily conclude that
   \begin{eqnarray*}
 (2\sigma_{T}-3\|T\|^{2})\cdot\varphi_{0}&=&\sum_{i}(e_{i}\lrcorner T)\cdot (e_{i}\lrcorner T)\cdot\varphi_{0}\overset{(\ref{xtg2})}{=}\frac{3\|T\|}{\sqrt{7}}\sum_{i}(e_{i}\lrcorner T)\cdot e_{i}\cdot \varphi_{0}\\
 &=&\frac{9\|T\|}{\sqrt{7}}  T\cdot\varphi_{0}\overset{(\ref{teigen})}{=} -9 \|T\|^{2}\cdot\varphi_{0},
  \end{eqnarray*}
  and the claim follows. 
   \end{proof}
 
 Combining the expression for   $\sigma_{T}$ and the equality $T^{2}=-2\sigma_{T}+\|T\|^{2}$, it is easy to compute  also the action of $T^{2}$ on  $\nabla^{c}$-parallel spinors lying in $\Sigma_{\gamma}$.  In particular,  for a $\nabla^{c}$-parallel spinor $\varphi$ the action $T^{2}\cdot\varphi$ in encrypted  in the kernel of the Casimir operator $\Omega:=\Delta_{T}+\frac{1}{16}\Big[2\Sca^{g}+\|T\|^{2}\Big]-\frac{1}{4}T^{2}$ \cite{Cas}.  Any $\nabla^{c}$-parallel belongs in the kernel of $\Omega$, hence it  satisfies the equation (see for example \cite{3Sak})
  \[
  T^{2}\cdot\varphi=\frac{1}{4}\Big[2\Sca^{g}+\|T\|^{2}\Big]\cdot\varphi. 
  \]
This formula in combination with $T^{2}=-2\sigma_{T}+\|T\|^{2}$,  gives rise to another way  for the computation of  $\sigma_{T}\cdot\varphi$. Finally, for the action $T^{2}\cdot\varphi$ the relation $T\cdot\varphi=\gamma\varphi$ can be applied twice which yields the same result, i.e. $T^{2}\cdot\varphi=\gamma^{2}\varphi$ with $\gamma^{2}=\frac{1}{4}\Big[2\Sca^{g}+\|T\|^{2}\Big]$ by Theorem \ref{KNOWN1}. 

 \end{document}